\newcommand{\Ad}{\operatorname{Ad}}
\newcommand{\Span}{\operatorname{Span}}
\newcommand{\Per}{\operatorname{Per}}
 \newcommand{\supp}{\operatorname{supp}}
\newcommand{\Dim}{\operatorname{Dim}}
\newcommand{\Rank}{\operatorname{Rank}}
\newcommand{\alg}{\operatorname{alg}}
\newcommand{\inv}{^{-1}}
\newcommand{\N}{{\mathbb{N}}}
\newcommand{\Z}{{\mathbb{Z}}}
\newcommand{\T}{{\mathbb{T}}}
\newcommand{\cip}{{\mathcal{M}}}
\newcommand{\cipp}{{\cip_{\per}}}
\newcommand{\cipa}{{\cip_{\aper}}}
\DeclareMathOperator{\orb}{Orb}
\DeclareMathOperator{\per}{Per}
\DeclareMathOperator{\aper}{Aper}
   \theoremstyle{plain}
   \newtheorem{thm}{Theorem}[section]
   \newtheorem{prop}[thm]{Proposition}
   \newtheorem{lemma}[thm]{Lemma}  
   \newtheorem{cor}[thm]{Corollary}
   \theoremstyle{definition}
   \newtheorem{defn}[thm]{Definition}
   \theoremstyle{remark}
   \numberwithin{equation}{section}
        \date{\today}
\title[The $C^*$-algebra of a locally injective
surjection]{The structure of the $C^*$-algebra of a locally injective surjection}
\author{Toke Meier Carlsen and Klaus Thomsen}
\date{\today}
\email{tokemeie@math.ntnu.no, 
matkt@imf.au.dk}
\address{Department of Mathematics, Norwegian University of Science
  and Technology, NO-7034 Trondheim, Norway}
\address{Institut for matematiske fag, Ny Munkegade, DK-8000 Aarhus C, Denmark}
\begin{document}

\maketitle

\section{Introduction}
 
The classical connection between dynamical systems and $C^*$-algebras
is the crossed product construction which associates a $C^*$-algebra
to a ho\-meo\-mor\-phism of a compact metric space. This construction has
been generalized stepwise by J. Renault (\cite{Re}), V. Deaconu
(\cite{De1}) and C. Anantharaman-Delaroche (\cite{An}) to local
ho\-meo\-mor\-phisms and recently also to locally injective surjections by
the second named author in \cite{Th1}. The main motivation for the
last generalisation was the wish to include the Matsumoto-type
$C^*$-algebra of a subshift which was introduced by the first named
author in \cite{C}.

In this paper we continue the investigation of the structure of the
$C^*$-algebra of a locally injective surjection which was begun in \cite{Th1}. The
main goal here is to give
necessary and sufficient conditions for the algebras, or at least
any simple quotient of them, to be purely
infinite; a property they are known to have in many cases. Recall that
a simple $C^*$-algebra is said to be purely infinite when all its
non-zero hereditary $C^*$-subalgebras contain an infinite projection. Our main
result is that a simple quotient of the $C^*$-algebra arising from
a locally injective surjection on a compact
metric space of finite covering dimension, as in Section 4 of \cite{Th1}, is one of the following kinds:    
\begin{enumerate}
\item[1)] a full matrix algebra $M_n(\mathbb C)$ for some $n \in
  \mathbb N$, or
\item[2)] the crossed product $C(K) \times_f \mathbb Z$ corresponding
  to a minimal homeomorphism $f$ of a compact metric space $K$ of
  finite covering dimension, or
\item[3)] a unital purely infinite simple $C^*$-algebra.
\end{enumerate}
In particular, when the algebra itself is simple it must be one of the
three types, and in fact purely infinite unless the underlying map is
a homeomorphism. Hence the problem of finding
necessary and sufficient conditions for the $C^*$-algebra of a locally injective
surjection on a compact metric space of finite covering dimension to be both simple and purely infinite has a
strikingly straightforward solution: If the algebra is simple (and \cite{Th1}
gives necessary and sufficient conditions for this to happen) then it
is automatically purely infinite unless the map in question is a homeomorphism.
A corollary of this result is that if the $C^*$-algebra of a one-sided
subshift
is simple, then it is also purely infinite.

On the way to the proof of the main result we study the
ideal structure. We find first the gauge invariant ideals,
obtaining an insight which combined with methods and results of
Katsura (\cite{K}) leads to a list of the primitive ideals. We then
identify the maximal ideals among the primitive ones and obtain in
this way a description of the simple quotients which we use to
obtain the conclusions described above. A fundamental tool all the way
is the canonical locally homeomorphic extension discovered in
\cite{Th2} which allows us to replace the given locally injective map with a local
homeomorphism. It means, however, that much of the structure we
investigate gets described in terms of the canonical locally
homeomorphic extension, and this is unfortunate since it may not be easy
to obtain a satisfactory understanding of it for a given locally
injective surjection. Still, it allows us to obtain qualitative
conclusions of the type mentioned above.

Besides the $C^*$-algebras of subshifts our results cover of course 
also the $C^*$-algebras associated to a local homeomorphism by the
construction of Renault, Deaconu and Anantharaman-Delaroche, provided
the map is surjective and the space is of finite covering dimension. This
means that the results have bearing on many classes of $C^*$-algebras
which have been associated to various structures, for example the
$\lambda$-graph systems of Matsumoto (\cite{Ma}) and the continuous
graphs of Deaconu (\cite{De2}). 

\smallskip

\emph{Acknowledgement:} This work was supported by the NordForsk
Research Network 'Operator Algebras and Dynamics' (grant 11580). The
first named author was also supported by the Research Council of Norway
through project 191195/V30.

\section{The $C^*$-algebra of a locally injective surjection}

Let $X$ be a compact metric space and $\varphi : X \to X$ a locally
injective surjection. Set
$$
\Gamma_{\varphi} = \left\{ (x,k,y) \in X \times \mathbb Z  \times X :
  \ \exists n,m \in \mathbb N, \ k = n -m , \ \varphi^n(x) =
  \varphi^m(y)\right\} .
$$
This is a groupoid with the set of composable pairs being
$$
\Gamma_{\varphi}^{(2)} \ =  \ \left\{\left((x,k,y), (x',k',y')\right) \in \Gamma_{\varphi} \times
  \Gamma_{\varphi} : \ y = x'\right\}.
$$
The multiplication and inversion are given by 
$$
(x,k,y)(y,k',y') = (x,k+k',y') \ \text{and}  \ (x,k,y)^{-1} = (y,-k,x)
.
$$
Note that the unit space of $\Gamma_{\varphi}$ can be identified with
$X$ via the map $x \mapsto (x,0,x)$. To turn $\Gamma_{\varphi}$ into a locally compact topological groupoid, fix $k \in \mathbb Z$. For each $n \in \mathbb N$ such that
$n+k \geq 0$, set
$$
{\Gamma_{\varphi}}(k,n) = \left\{ \left(x,l, y\right) \in X \times \mathbb
  Z \times X: \ l =k, \ \varphi^{k+i}(x) = \varphi^i(y), \ i \geq n \right\} .
$$
This is a closed subset of the topological product $X \times \mathbb Z
\times X$ and hence a locally compact Hausdorff space in the relative
topology.
Since $\varphi$ is locally injective $\Gamma_{\varphi}(k,n)$ is an open subset of
$\Gamma_{\varphi}(k,n+1)$ and hence the union
$$
{\Gamma_{\varphi}}(k) = \bigcup_{n \geq -k} {\Gamma_{\varphi}}(k,n) 
$$
is a locally compact Hausdorff space in the inductive limit topology. The disjoint union
$$
\Gamma_{\varphi} = \bigcup_{k \in \mathbb Z} {\Gamma_{\varphi}}(k)
$$
is then a locally compact Hausdorff space in the topology where each
${\Gamma_{\varphi}}(k)$ is an open and closed set. In fact, as is easily verified, $\Gamma_{\varphi}$ is a locally
compact groupoid in the sense of \cite{Re} and a semi \'etale groupoid
in the sense of \cite{Th1}. The paper \cite{Th1} contains a
construction of a $C^*$-algebra from any semi \'etale groupoid,
but we give here only a description of the construction for $\Gamma_{\varphi}$.

Consider the space $B_c\left(\Gamma_{\varphi}\right)$ of
compactly supported bounded functions on $\Gamma_{\varphi}$. They form
a $*$-algebra with respect to the convolution-like product
$$
f \star g (x,k,y) = \sum_{z,n+ m = k} f(x,n,z)g(z,m,y)
$$
and the involution
$$
f^*(x,k,y) = \overline{f(y,-k,x)} .
$$
To turn it into a $C^*$-algebra, let $x \in X$ and consider the
Hilbert space $H_x$ of square summable functions on $\left\{ (x',k,y')
    \in \Gamma_{\varphi} : \ y' = x \right\}$ which carries a
  representation $\pi_x$ of the $*$-algebra $B_c\left(\Gamma_{\varphi}\right)$
  defined such that
\begin{equation}\label{pirep}
\left(\pi_x(f)\psi\right)(x',k, x) = \sum_{z, n+m = k}
f(x',n,z)\psi(z,m,x)  
\end{equation}
when $\psi  \in H_x$. One can then define a
$C^*$-algebra $B^*_r\left(\Gamma_{\varphi}\right)$ as the completion
of $B_c\left(\Gamma_{\varphi}\right)$ with respect to the norm
$$
\left\|f\right\| = \sup_{x \in X} \left\|\pi_x(f)\right\| .
$$
The space $C_c\left(\Gamma_{\varphi}\right)$ of
continuous and compactly supported functions on
$\Gamma_{\varphi}$ generate a $*$-subalgebra $\alg^*
\Gamma_{\varphi}$ of $B^*_r\left(\Gamma_{\varphi}\right)$ which completed with respect to the above norm becomes the $C^*$-algebra
$C^*_r\left(\Gamma_{\varphi}\right)$ which is our object of
study. When $\varphi$ is open, and hence a local homeomorphism,
$C_c\left(\Gamma_{\varphi}\right)$ is a $*$-subalgebra of
$B_c\left(\Gamma_{\varphi}\right)$ so that $\alg^* \Gamma_{\varphi} =
C_c\left(\Gamma_{\varphi}\right)$ and
$C^*_r\left(\Gamma_{\varphi}\right)$ is then the completion of
$C_c\left(\Gamma_{\varphi}\right)$. In this case
$C_r^*\left(\Gamma_{\varphi}\right)$ is the algebra studied by Renault
in \cite{Re}, by Deaconu in \cite{De1},
and by Anantharaman-Delaroche in \cite{An}.

The algebra $ C^*_r\left(\Gamma_{\varphi}\right)$
contains several canonical $C^*$-subalgebras which we shall need in
our study of its structure. One is the $C^*$-algebra of the open
sub-groupoid 
$$
R_{\varphi} = \Gamma_{\varphi}(0)
$$
which is a semi \'etale groupoid (equivalence relation, in fact) in itself. The corresponding
$C^*$-algebra $C^*_r\left(R_{\varphi}\right)$ is the $C^*$-subalgebra of
$C^*_r\left(\Gamma_{\varphi}\right)$ generated by the continuous and
compactly supported functions on $R_{\varphi}$. Equally important are
two canonical abelian $C^*$-subalgebras, $D_{\Gamma_{\varphi}}$ and
$D_{R_{\varphi}}$. They arise from the fact that the $C^*$-algebra
$B(X)$ of bounded functions on $X$ sits canonically inside
$B^*_r\left(\Gamma_{\varphi}\right)$, cf. p. 765 of \cite{Th1}, and are
then defined as
$$
D_{\Gamma_{\varphi}} =  C^*_r\left(\Gamma_{\varphi}\right) \cap B(X)
$$ 
and
$$
D_{R_{\varphi}} =  C^*_r\left(R_{\varphi}\right) \cap B(X),
$$ 
respectively. There are faithful conditional expectations
$P_{\Gamma_{\varphi}} : C^*_r\left(\Gamma_{\varphi}\right) \to
D_{\Gamma_{\varphi}}$ and $P_{R_{\varphi}} : C^*_r\left(R_{\varphi}\right) \to
D_{R_{\varphi}}$, obtained as extensions of the restriction map
$\alg^* \Gamma_{\varphi} \to B(X)$ to
$C^*_r\left(\Gamma_{\varphi}\right)$ and
$C^*_r\left(R_{\varphi}\right)$, respectively. When $\varphi$ is open
and hence a local homeomorphism, the two algebras
$D_{\Gamma_{\varphi}}$ and $D_{R_{\varphi}}$ are identical and equal
to $C(X)$, but in general the inclusion $D_{R_{\varphi}} \subseteq
D_{\Gamma_{\varphi}}$ is strict.

Our approach to the study of $C^*_r\left(\Gamma_{\varphi}\right)$
hinges on a construction introduced in \cite{Th2} of a compact Hausdorff space $Y$ and a local
homeomorphic surjection $\phi : Y \to Y$ such that $(X,\varphi)$ is a
factor of $(Y,\phi)$ and
\begin{equation}\label{basiciso}
C^*_r\left(\Gamma_{\varphi}\right) \simeq
C^*_r\left(\Gamma_{\phi}\right) .
\end{equation}
Everything we can say about ideals and simple quotients of
$C^*_r\left(\Gamma_{\phi}\right)$ will have bearing on
$C^*_r\left(\Gamma_{\varphi}\right)$, but while the isomorphism
(\ref{basiciso}) is equivariant with respect to the canonical gauge
actions (see Section \ref{gaugeac}), it
will not in general take $C^*_r\left(R_{\varphi}\right)$ onto
$C^*_r\left(R_{\phi}\right)$. This is one reason why we will work with
$C^*_r\left(\Gamma_{\varphi}\right)$ whenever possible, instead of
using (\ref{basiciso}) as a valid excuse for working with local
homeomorphisms only. Another is that it is generally not so easy to
get a workable description of $(Y,\phi)$. As in \cite{Th2} we will refer to
$(Y,\phi)$ as the \emph{canonical locally homeomorphic extension} of
$(X,\varphi)$. The space $Y$ is the Gelfand spectrum of
$D_{\Gamma_{\varphi}}$ so when $\varphi$ is already a local
homeomorphism itself, the extension is redundant and $(Y,\phi) = (X,\varphi)$.

\section{Ideals in $C^*_r\left(R_{\varphi}\right)$}

Recall from \cite{Th1} that there is a semi \'etale equivalence
relation 
$$
R\left(\varphi^n\right) = \left\{ (x,y) \in X \times X : \varphi^n(x)
  = \varphi^n(y) \right\}
$$ 
for each $n \in \mathbb N$. They will be considered as open
sub-equivalence relations of $R_{\varphi}$ via the embedding $(x,y)
\mapsto (x,0,y) \in \Gamma_{\varphi}(0)$. In this way we get
embeddings $C^*_r\left(R\left(\varphi^n\right)\right) \subseteq
C^*_r\left(R\left(\varphi^{n+1}\right)\right) \subseteq
C^*_r\left(R_{\varphi}\right)$ by Lemma 2.10 of \cite{Th1}, and then
\begin{equation}\label{crux}
C^*_r\left(R_{\varphi}\right) =
\overline{\bigcup_n C^*_r\left(R\left(\varphi^n\right)\right)} ,
\end{equation}
cf. (4.2) of \cite{Th1}. This inductive limit decomposition of
$C^*_r\left(R_{\varphi}\right)$ defines in a natural way a similar
inductive limit decomposition of $D_{R_{\varphi}}$. Set
$$
D_{R\left(\varphi^n\right)} =
C^*_r\left(R\left(\varphi^n\right)\right) \cap B(X) .
$$

\begin{lemma}\label{AA1} 
$D_{R_{\varphi}} =
  \overline{\bigcup_{n=1}^{\infty} D_{R\left(\varphi^n\right)}}$.
\begin{proof} Since $C^*_r\left(R\left(\varphi^n\right)\right)
  \subseteq C^*_r\left(R_{\varphi}\right)$, it follows that
$$
D_{R(\varphi^n)} = C^*_r\left(R\left(\varphi^n\right)\right) \cap
B(X) \subseteq C^*_r\left(R_{\varphi}\right) \cap B(X) = D_{R_{\varphi}}.
$$
Hence
\begin{equation}\label{AA2}
  \overline{\bigcup_{n=1}^{\infty} D_{R\left(\varphi^n\right)}}
  \subseteq D_{R_{\varphi}} .
\end{equation}
Let $x \in D_{R_{\varphi}}$ and let $\epsilon > 0$. It follows from
(\ref{crux}) that there is an $n \in \mathbb N$ and an element $y \in
\alg^* R\left(\varphi^n\right)$ such that
\begin{equation*}
\left\|x - P_{R_{\varphi}}(y)\right\| \leq \epsilon .
\end{equation*}
On $\alg^* R_{\varphi}$
the conditional expectation $P_{R_{\varphi}}$ is just the map
which restricts functions to $X$ and the same is true for the
conditional expectation
$P_{R\left(\varphi^n\right)}$ on $\alg^*
R\left(\varphi^n\right)$, where $P_{R\left(\varphi^n\right)}$ is the
conditional expectation of Lemma 2.8 in \cite{Th1} obtained by
considering $R\left(\varphi^n\right)$ as a semi \'etale groupoid in itself.  Hence $P_{R_{\varphi}}(y) =
P_{R\left(\varphi^n\right)}(y) \in
D_{R\left(\varphi^n\right)}$. It follows that we have equality in (\ref{AA2}).
\end{proof}
\end{lemma}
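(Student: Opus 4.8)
The plan is to prove the two inclusions separately, with essentially all of the content sitting in the compatibility of the conditional expectations $P_{R_{\varphi}}$ and $P_{R(\varphi^n)}$ on the dense subalgebra $\alg^* R(\varphi^n)$. The inclusion $\overline{\bigcup_n D_{R(\varphi^n)}} \subseteq D_{R_{\varphi}}$ is immediate: since $C^*_r\left(R(\varphi^n)\right) \subseteq C^*_r\left(R_{\varphi}\right)$ by Lemma 2.10 of \cite{Th1}, intersecting both sides with $B(X)$ gives $D_{R(\varphi^n)} \subseteq D_{R_{\varphi}}$ for every $n$, and taking the closure of the increasing union preserves this.

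For the reverse inclusion I would fix $x \in D_{R_{\varphi}}$ and $\epsilon > 0$. Because $\alg^* R(\varphi^n)$ is dense in $C^*_r\left(R(\varphi^n)\right)$ and these subalgebras increase to a dense subalgebra of $C^*_r\left(R_{\varphi}\right)$ by (\ref{crux}), there are an $n$ and a $y \in \alg^* R(\varphi^n)$ with $\|x - y\| \leq \epsilon$. Applying $P_{R_{\varphi}}$, which is contractive and fixes $x$ since $x$ lies in its range, yields $\|x - P_{R_{\varphi}}(y)\| \leq \epsilon$. The key step is then to observe that on $\alg^* R(\varphi^n)$ the expectation $P_{R_{\varphi}}$ is just the map restricting functions to the unit space $X$, and the same description holds for the expectation $P_{R(\varphi^n)}$ of Lemma 2.8 of \cite{Th1} obtained by regarding $R(\varphi^n)$ as a semi \'etale groupoid in its own right; hence $P_{R_{\varphi}}(y) = P_{R(\varphi^n)}(y) \in D_{R(\varphi^n)}$. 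Thus $x$ lies within $\epsilon$ of $D_{R(\varphi^n)}$, and since $\epsilon$ was arbitrary, $x \in \overline{\bigcup_n D_{R(\varphi^n)}}$.

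The only delicate point, which I would spell out carefully, is the claim that $P_{R_{\varphi}}$ and $P_{R(\varphi^n)}$ agree on $\alg^* R(\varphi^n)$. This is a matter of unwinding the definitions: the embedding $C^*_r\left(R(\varphi^n)\right) \subseteq C^*_r\left(R_{\varphi}\right)$ arises from viewing $R(\varphi^n)$ as an open subequivalence relation of $R_{\varphi}$, so a continuous compactly supported function on $R(\varphi^n)$ is carried to the same function regarded on $R_{\varphi}$; both conditional expectations are extensions of the restriction-to-$X$ map from the respective dense $*$-subalgebras, so they necessarily coincide wherever both are defined. With that identification the argument closes, and no deeper input is needed.
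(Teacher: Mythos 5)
Your proposal is correct and follows essentially the same route as the paper's proof: the easy inclusion by intersecting with $B(X)$, and the reverse inclusion by approximating $x\in D_{R_{\varphi}}$ via (\ref{crux}), applying the contractive expectation $P_{R_{\varphi}}$, and using that $P_{R_{\varphi}}$ and $P_{R\left(\varphi^n\right)}$ both act as restriction to $X$ on $\alg^* R\left(\varphi^n\right)$. The only difference is that you spell out explicitly the contractivity step and the agreement of the two expectations, which the paper leaves more implicit.
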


In the following, by an ideal of a $C^*$-algebra we will always mean a
closed and two-sided ideal. The next lemma is well known and crucial
for the sequel. 

\begin{lemma}\label{AA3} Let $Y$ be a compact Hausdorff space, $M_n$
  the $C^*$-algebra of $n$-by-$n$ matrices for some natural number $n \in
  \mathbb N$ and $p$ a projection in $C(Y,M_n)$. Set $A = pC(Y,M_n)p$
  and let $C_A$ be the center of $A$.

For every ideal $I$ in $A$ there
  is an approximate unit for $I$ in $I \cap C_A$.
\end{lemma}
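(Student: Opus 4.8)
The plan is to reduce everything to the commutative picture provided by the center. Note first that $A = pC(Y,M_n)p$ is a unital $C^*$-algebra with center $C_A$, and since $p$ is a projection in $C(Y,M_n)$, its pointwise rank $y \mapsto \operatorname{rank} p(y)$ is a locally constant function on $Y$; hence $Y$ decomposes into finitely many clopen pieces on each of which $A$ looks like $C(\cdot, M_k)p$ with $p$ of constant rank $k$. On each such piece, $pC(Y,M_n)p$ is a homogeneous $C^*$-algebra whose primitive ideal space is that clopen piece of $Y$, and its center is the algebra of continuous functions on that piece; taking the direct sum over the pieces we get that $C_A \cong C(\widehat{A})$ where $\widehat{A} = \operatorname{Prim} A$ is a compact Hausdorff space naturally homeomorphic to $Y$ (after the clopen decomposition). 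The key structural fact I would invoke is that $A$ is a continuous-trace $C^*$-algebra — indeed a homogeneous one — so that every ideal $I$ of $A$ is of the form $I = I_U = \{a \in A : a(y) = 0 \text{ for } y \notin U\}$ for some open subset $U \subseteq \widehat{A}$, and restriction to $U$ identifies $I$ with the sections vanishing at infinity on $U$.

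Given this, the construction of the approximate unit is straightforward. Since $U$ is an open subset of the compact Hausdorff (hence locally compact Hausdorff, paracompact) space $\widehat{A}$, choose an increasing net — or since everything here is metrizable, a sequence — of functions $h_\lambda \in C_0(U)$ with $0 \le h_\lambda \le 1$ forming an approximate unit for $C_0(U)$; concretely, pick compact sets $K_\lambda \subseteq U$ with $K_\lambda \subseteq \operatorname{int} K_{\lambda+1}$ and $\bigcup_\lambda K_\lambda = U$, and let $h_\lambda$ be a Urysohn function that is $1$ on $K_\lambda$ and supported in $\operatorname{int} K_{\lambda+1}$. Viewing $C_A = C(\widehat{A})$, each $h_\lambda$ defines an element $e_\lambda := h_\lambda \cdot 1_A \in C_A$, and because $h_\lambda$ vanishes outside the compact set $K_{\lambda+1} \subseteq U$, we have $e_\lambda \in I_U \cap C_A = I \cap C_A$. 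To see $(e_\lambda)$ is an approximate unit for $I$: for $a \in I = I_U$, the function $y \mapsto \|a(y)\|$ lies in $C_0(U)$, so $\|a - e_\lambda a\| = \sup_{y \in U} |1 - h_\lambda(y)| \, \|a(y)\| \to 0$ since $h_\lambda \to 1$ uniformly on compacta in $U$ while $\|a(y)\|$ is small off a compact set. Thus $e_\lambda a \to a$ and, by the $C^*$-identity applied to $a a^*$ or by symmetry, $a e_\lambda \to a$ as well.

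The main obstacle, such as it is, lies in making precise the identification of $A$ with a bundle of matrix algebras over $\widehat{A} \cong Y$ and the corresponding bijection between ideals of $A$ and open subsets of $\widehat{A}$. This is where the "well known" in the statement does its work: one uses that $C(Y,M_n)$ has ideals exactly $C_0(V,M_n)$ for $V \subseteq Y$ open (by the Dauns–Hofmann theorem, or directly), that the corner $pC(Y,M_n)p$ is Morita equivalent to the ideal $C(Y,M_n)p C(Y,M_n)$ of $C(Y,M_n)$ generated by $p$, and that Morita equivalence preserves the ideal lattice and the primitive ideal space. Alternatively, one can argue locally on the clopen rank-decomposition of $p$ and cite that a unital homogeneous $C^*$-algebra of degree $k$ over a compact Hausdorff base has center equal to the continuous functions on the base and ideal lattice equal to the open sets of the base. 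Once this correspondence is in hand, the approximate-unit construction above goes through verbatim, and the extra point — that the approximate unit can be taken \emph{inside} the center — is exactly the observation that the Urysohn functions $h_\lambda$, when regarded as central multipliers $h_\lambda \cdot 1_A$, already sit in $I$ because they are compactly supported in $U$.
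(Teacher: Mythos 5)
Your proposal is correct, and in fact there is nothing in the paper to compare it with: Lemma~\ref{AA3} is stated there without proof, as ``well known'', so what you have written is a legitimate way of supplying the missing argument. The points you isolate are exactly the ones that matter: the rank of $p$ is locally constant, so after a clopen decomposition one may assume it is constant; the center of $A=pC(Y,M_n)p$ is $\{f\,p : f \in C(Y_0)\}\cong C(Y_0)$, where $Y_0=\{y : p(y)\neq 0\}$; and every ideal of $A$ is of the form $I_U=\{a\in A : a(y)=0 \ \forall y\notin U\}$ for an open $U\subseteq Y_0$ --- your route via the Rieffel correspondence for the full corner $pC(Y_0,M_n)p$ of $C(Y_0,M_n)$ works (note $pJp=J\cap A$ for an ideal $J$ of $C(Y_0,M_n)$, and the ideals of $C(Y_0,M_n)$ are the $C_0(V,M_n)$), as would a direct local argument with matrix units and a partition of unity. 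Once this is in place, the elements $h\cdot p$ with $h$ a Urysohn function compactly supported in $U$ are central, lie in $I_U$, and form an approximate unit because $y\mapsto\|a(y)\|$ belongs to $C_0(U)$ for every $a\in I_U$. Two small corrections: $Y$ is only assumed compact Hausdorff, not metrizable, so you should index the approximate unit by the compact subsets of $U$ (your net formulation) rather than assert that a sequence, or an exhaustion $K_\lambda\subseteq\operatorname{int}K_{\lambda+1}$ of $U$, exists; and $\operatorname{Prim} A$ is $Y_0$ rather than all of $Y$ when $p$ vanishes on a clopen piece --- harmless, since that piece contributes the zero algebra.
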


\begin{lemma}\label{A7} Let $I,J \subseteq
  C^*_r\left(R_{\varphi}\right)$ be two ideals. Then
$$
I \cap D_{R_{\varphi}} \subseteq  J \cap D_{R_{\varphi}} \ \Rightarrow \ I \subseteq J.
$$ 
\begin{proof} If $I \cap D_{R_{\varphi}} \subseteq J \cap
  D_{R_{\varphi}}$ it
  follows that $I \cap D_{R\left(\varphi^n\right)} \subseteq
J \cap D_{R\left(\varphi^n\right)}$ for all $n$. Note that the center
of $C^*_r\left(R\left(\varphi^n\right)\right)$ is contained in
$D_{R\left(\varphi^n\right)}$ since $D_{R\left(\varphi^n\right)}$ is
maximal abelian in $C^*_r\left(R\left(\varphi^n\right)\right)$ by
Lemma 2.19 of \cite{Th1}. By using Corollary 3.3 of \cite{Th1} it follows therefore from Lemma \ref{AA3} that
there is a sequence $\{x_n\}$ in $ I \cap D_{R\left(\varphi^n\right)}$ such that $\lim_{n \to \infty} x_na = a$ for all $a
\in I \cap C^*_r\left(R\left(\varphi^n\right)\right)$. Since $x_n \in J \cap D_{R\left(\varphi^n\right)}$
this implies that $I \cap C^*_r\left(R\left(\varphi^n\right)\right)
\subseteq J \cap C^*_r\left(R\left(\varphi^n\right)\right)$ for all
$n$. Combining with (\ref{crux}) we find that
$$
I = \overline{\bigcup_n I \cap
  C^*_r\left(R\left(\varphi^n\right)\right)} \subseteq \overline{\bigcup_n J \cap
  C^*_r\left(R\left(\varphi^n\right)\right)} = J . 
$$
\end{proof}
\end{lemma}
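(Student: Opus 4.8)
The plan is to reduce the statement to the finite level $C^*_r\left(R\left(\varphi^n\right)\right)$, where each $R\left(\varphi^n\right)$ is an \'etale equivalence relation on the compact space $X$, and then invoke the structural result \reflemma{AA3} to produce a central approximate unit. First I would observe that intersecting with $D_{R_{\varphi}}$ and using the inductive limit decomposition \reflemma{AA1} of $D_{R_{\varphi}}$, together with the fact that $D_{R\left(\varphi^n\right)} = C^*_r\left(R\left(\varphi^n\right)\right) \cap D_{R_{\varphi}}$ (which follows from $C^*_r\left(R\left(\varphi^n\right)\right) \subseteq C^*_r\left(R_{\varphi}\right)$), the hypothesis $I \cap D_{R_{\varphi}} \subseteq J \cap D_{R_{\varphi}}$ descends to $I \cap D_{R\left(\varphi^n\right)} \subseteq J \cap D_{R\left(\varphi^n\right)}$ for every $n$.

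Next I would fix $n$ and work inside $A_n := C^*_r\left(R\left(\varphi^n\right)\right)$. By Corollary 3.3 of \cite{Th1} this algebra is locally of the form $p C(Y,M_k) p$, so \reflemma{AA3} applies to the ideal $I \cap A_n$: its center is contained in $D_{R\left(\varphi^n\right)}$ because $D_{R\left(\varphi^n\right)}$ is maximal abelian in $A_n$ (Lemma 2.19 of \cite{Th1}), hence there is an approximate unit $\{x_n\}$ for $I \cap A_n$ sitting in $(I \cap A_n) \cap C_{A_n} \subseteq I \cap D_{R\left(\varphi^n\right)}$. The key point is then that since $x_n \in I \cap D_{R\left(\varphi^n\right)} \subseteq J \cap D_{R\left(\varphi^n\right)} \subseteq J$, for any $a \in I \cap A_n$ we get $a = \lim_k x_n^{(k)} a \in \overline{J A_n} \subseteq J$, so $I \cap A_n \subseteq J \cap A_n$.

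Finally, taking the closure of the union over $n$ and using the inductive limit decomposition (\ref{crux}) of $C^*_r\left(R_{\varphi}\right)$, I would conclude $I = \overline{\bigcup_n I \cap A_n} \subseteq \overline{\bigcup_n J \cap A_n} = J$; here one uses that $I$, being an ideal, satisfies $I = \overline{\bigcup_n I \cap A_n}$ whenever $C^*_r\left(R_{\varphi}\right) = \overline{\bigcup_n A_n}$, a standard fact about approximation of elements of an ideal by products with elements of the dense subalgebra.

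The main obstacle is the passage through \reflemma{AA3}: one must be careful that the lemma yields an approximate unit contained in the \emph{center} of $I \cap A_n$ (not merely in $I \cap A_n$), and that this center lands in $D_{R\left(\varphi^n\right)}$ — which is exactly where maximal abelianness of $D_{R\left(\varphi^n\right)}$ in $A_n$ enters. The rest is bookkeeping: verifying that the hypothesis and the two inductive limit decompositions interact correctly, which is routine once \reflemma{AA1}, \reflemma{AA3}, and (\ref{crux}) are in hand.
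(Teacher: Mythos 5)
Your proposal is correct and follows essentially the same route as the paper: descend the hypothesis to $I \cap D_{R\left(\varphi^n\right)} \subseteq J \cap D_{R\left(\varphi^n\right)}$, use Corollary 3.3 and Lemma 2.19 of \cite{Th1} together with Lemma \ref{AA3} to get an approximate unit for $I \cap C^*_r\left(R\left(\varphi^n\right)\right)$ inside $I \cap D_{R\left(\varphi^n\right)} \subseteq J$, conclude $I \cap C^*_r\left(R\left(\varphi^n\right)\right) \subseteq J$, and finish with the inductive limit decomposition (\ref{crux}). The only cosmetic slip is the phrase ``center of $I \cap A_n$'' in your last paragraph; Lemma \ref{AA3} gives an approximate unit in $I \cap C_{A_n}$, the intersection with the center of $A_n$, exactly as you stated it correctly in the body of the argument.
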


Recall from \cite{Th1} that an ideal $J$ in $D_{R_{\varphi}}$ is said to
be \emph{$R_{\varphi}$-invariant} when $n^*Jn \subseteq J$ for all $n
\in \alg^* R_{\varphi}$ supported in a bisection of $R_{\varphi}$.
For every $R_{\varphi}$-invariant ideal $J$ in $D_{R_{\varphi}}$,
set
$$
\widehat{J} = \left\{ a \in C^*_r\left(R_{\varphi}\right) : \
  P_{R_{\varphi}}(a^*a) \in J \right\} .
$$

\begin{thm}\label{A4} The map $J \mapsto \widehat{J}$ is a bijection
  between the $R_{\varphi}$-invariant ideals in
  $D_{R_{\varphi}}$ and the ideals in
$C^*_r\left(R_{\varphi}\right)$. The inverse is given by the map $I
\mapsto I \cap D_{R_{\varphi}}$
\begin{proof} It follows from Lemma 2.13 of \cite{Th1} that $\widehat{J} \cap
  D_{R_{\varphi}} = J$ for any $R_{\varphi}$-invariant ideal in
  $D_{R_{\varphi}}$. It suffices therefore to show that every ideal in
  $C^*_r\left(R_{\varphi}\right)$ is of the form $\widehat{J}$ for some
  $R_{\varphi}$-invariant ideal $J$ in
  $D_{R_{\varphi}}$.
  Let $I$ be an ideal in
  $C^*_r\left(R_{\varphi}\right)$. Set $J = I \cap
  D_{R_{\varphi}}$, which is clearly a $R_{\varphi}$-invariant ideal in
  $D_{R_{\varphi}}$. Since $\widehat{J} \cap D_{R_{\varphi}} = J
  = I \cap D_{R_{\varphi}}$ by Lemma 2.13 of \cite{Th1}, 
  we conclude from Lemma \ref{A7} that $\widehat{J} = I$.

\end{proof}
\end{thm}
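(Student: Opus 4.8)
The plan is to reduce the whole statement to two ingredients that are already available: first, the fact that for every $R_{\varphi}$-invariant ideal $J$ in $D_{R_{\varphi}}$ the set $\widehat{J}$ is an ideal in $C^*_r\left(R_{\varphi}\right)$ with $\widehat{J}\cap D_{R_{\varphi}} = J$ --- this is Lemma 2.13 of \cite{Th1} applied to $R_{\varphi}$ regarded as a semi \'etale groupoid in its own right, with $P_{R_{\varphi}}$ and the notion of $R_{\varphi}$-invariance as introduced in Section 2 --- and second, \reflemma{A7}, which lets one detect an inclusion of ideals of $C^*_r\left(R_{\varphi}\right)$ after intersecting with the diagonal $D_{R_{\varphi}}$. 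Granting the first ingredient, $J \mapsto \widehat{J}$ is already seen to be injective and to be a right inverse of $I \mapsto I\cap D_{R_{\varphi}}$; so the only thing that remains is to show that $I \mapsto I\cap D_{R_{\varphi}}$ is injective, equivalently that $J \mapsto \widehat{J}$ is surjective.

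For surjectivity I would take an arbitrary ideal $I \subseteq C^*_r\left(R_{\varphi}\right)$ and put $J = I\cap D_{R_{\varphi}}$. That $J$ is an ideal in $D_{R_{\varphi}}$ is immediate, and its $R_{\varphi}$-invariance is a one-line check: if $n \in \alg^* R_{\varphi}$ is supported in a bisection of $R_{\varphi}$, then $n^*Jn \subseteq n^*In \subseteq I$ since $I$ is an ideal, while $n^*D_{R_{\varphi}}n \subseteq D_{R_{\varphi}}$, so $n^*Jn \subseteq J$. Applying the first ingredient to this $J$ gives $\widehat{J}\cap D_{R_{\varphi}} = J = I\cap D_{R_{\varphi}}$. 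Now feed the two resulting inclusions into \reflemma{A7}: once with $(\widehat{J},I)$ in place of $(I,J)$, using $\widehat{J}\cap D_{R_{\varphi}} \subseteq I\cap D_{R_{\varphi}}$, to get $\widehat{J}\subseteq I$; and once with $(I,\widehat{J})$ in place of $(I,J)$, using $I\cap D_{R_{\varphi}} \subseteq \widehat{J}\cap D_{R_{\varphi}}$, to get $I\subseteq \widehat{J}$. Hence $I = \widehat{J}$, which proves surjectivity and simultaneously identifies $I \mapsto I\cap D_{R_{\varphi}}$ as the two-sided inverse of $J \mapsto \widehat{J}$.

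I do not expect any genuine obstacle inside this argument itself, since all the weight has been shifted onto the preceding lemmas. The only point that deserves a moment's care is the invocation of Lemma 2.13 of \cite{Th1}: one should make sure that $R_{\varphi}$ meets the hypotheses of that lemma and that its conditional expectation and its invariance notion coincide with $P_{R_{\varphi}}$ and $R_{\varphi}$-invariance as used here, which is routine from the setup in Section 2. The substantive input is really \reflemma{A7}, whose proof already exploited the inductive-limit decomposition \eqref{crux}, the maximal abelianness of $D_{R\left(\varphi^n\right)}$ in $C^*_r\left(R\left(\varphi^n\right)\right)$, and the center-valued approximate units supplied by \reflemma{AA3}; but with that lemma in hand the theorem follows by the purely formal manipulation above.
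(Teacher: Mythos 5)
Your proposal is correct and follows essentially the same route as the paper: Lemma 2.13 of \cite{Th1} gives $\widehat{J}\cap D_{R_{\varphi}}=J$, and then, for an arbitrary ideal $I$ with $J=I\cap D_{R_{\varphi}}$ (whose $R_{\varphi}$-invariance the paper also dismisses as clear), the equality $\widehat{J}\cap D_{R_{\varphi}}=I\cap D_{R_{\varphi}}$ combined with Lemma \ref{A7} (applied in both directions, exactly as you spell out) yields $\widehat{J}=I$.
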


A subset $A \subseteq Y$ is said to be \emph{$\phi$-saturated} when
$\phi^{-k}\left(\phi^k(A)\right) = A$ for all $k \in \mathbb N$.

\begin{cor}\label{A5} (Cf. Proposition II.4.6 of \cite{Re}) The map 
$$
L \mapsto I_L = \left\{ a \in C^*_r\left(R_{\phi}\right) : \
  P_{R_{\phi}}(a^*a)(x) = 0\ \forall
  x \in L \right\}
$$
is a bijection from the non-empty closed $\phi$-saturated subsets $L$ of $Y$ onto the set of proper ideals in
$C^*_r\left(R_{\phi}\right)$.
\begin{proof} Since $\phi$ is a local homeomorphism, we have that $D_{R_{\phi}} =
  C(Y)$ so the corollary follows from Theorem \ref{A4} by use of the
  well-known bijection between ideals in $C(Y)$ and closed subsets of
  $Y$. The only thing to show is that an open subset $U$ of $Y$ is 
  $\phi$-saturated if and only if the ideal $C_0(U)$ of $C(Y)$ is
  $R_{\phi}$-invariant which is straightforward, cf. the proof of
  Corollary 2.18 in \cite{Th1}.
\end{proof}
\end{cor}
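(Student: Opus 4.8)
The plan is to deduce the statement from Theorem~\ref{A4} and the classical correspondence between ideals of a commutative $C^*$-algebra and open subsets of its spectrum. Since $\phi$ is a local homeomorphism it is open, so $D_{R_\phi} = C(Y)$ and $\alg^* R_\phi = C_c\left(R_\phi\right)$. The ideals of $C(Y)$ are precisely the sets $C_0(U)$, $U \subseteq Y$ open, and $U \mapsto C_0(U)$ together with its inverse $C_0(U) \mapsto Y\setminus U$ identifies the open subsets of $Y$ with its closed subsets, the proper ideals corresponding to the sets $U \neq Y$, i.e.\ to the non-empty closed sets $L = Y\setminus U$. Unravelling the definitions, $I_L = \widehat{C_0(Y\setminus L)}$ in the notation of Theorem~\ref{A4}. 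By Theorem~\ref{A4} it therefore suffices to prove that, for an open set $U \subseteq Y$, the ideal $C_0(U)$ of $C(Y)$ is $R_\phi$-invariant if and only if $L = Y\setminus U$ is $\phi$-saturated.

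I first record that $\phi$-saturation is preserved under complementation: if $\phi^{-k}\left(\phi^k(A)\right) = A$ for all $k$, then $\phi^{-k}\left(\phi^k(Y\setminus A)\right) = Y\setminus A$ for all $k$, since a point $z$ with $\phi^k(z) \in \phi^k(Y\setminus A)$ cannot lie in $A$ — otherwise a point of $Y\setminus A$ in $\phi^{-k}\left(\phi^k(z)\right)$ would lie in $\phi^{-k}\left(\phi^k(A)\right) = A$. Hence $U$ is $\phi$-saturated exactly when $L$ is, and I may work with $L$. Next I unwind $R_\phi$-invariance. If $n \in C_c\left(R_\phi\right)$ is supported in a bisection $V$ of $R_\phi$, with associated partial homeomorphism $\tau_V : s(V) \to r(V)$ (so $\left(\tau_V(y),0,y\right) \in V$ for $y \in s(V)$), then a direct computation in the convolution algebra shows that for $f \in C(Y)$ the element $n^* f n$ is the function supported on $s(V)$ given by $y \mapsto \left|n\left(\tau_V(y),0,y\right)\right|^2 f\left(\tau_V(y)\right)$. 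Since $C_c(V)$ and $C_0(U)$ contain enough functions to separate points, it follows that $n^* C_0(U) n \subseteq C_0(U)$ for all such $n$ and $V$ if and only if $\tau_V\left(L \cap s(V)\right) \subseteq L$ for every bisection $V$ of $R_\phi$.

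It remains to match this last condition with $\phi$-saturation of $L$. If $L$ is $\phi$-saturated and $y \in L \cap s(V)$, then $\left(\tau_V(y),0,y\right)$ lies in $R\left(\phi^m\right)$ for some $m$, so $\phi^m\left(\tau_V(y)\right) = \phi^m(y) \in \phi^m(L)$ and hence $\tau_V(y) \in \phi^{-m}\left(\phi^m(L)\right) = L$. Conversely, assume $\tau_V\left(L\cap s(V)\right) \subseteq L$ for every bisection $V$, and let $z \in \phi^{-m}\left(\phi^m(L)\right)$, say $\phi^m(z) = \phi^m(y)$ with $y \in L$. Then $(z,0,y) \in R\left(\phi^m\right) \subseteq R_\phi$; since $\phi^m$ is a local homeomorphism, $R\left(\phi^m\right)$ is an \'etale equivalence relation, so a small enough neighbourhood $V$ of $(z,0,y)$ is a bisection of $R_\phi$ with $\tau_V(y) = z$, whence $z \in L$. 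Thus $\phi^{-m}\left(\phi^m(L)\right) = L$ for all $m$, i.e.\ $L$ is $\phi$-saturated. Combining this equivalence with the reductions of the first two paragraphs completes the argument, which is essentially that of Corollary 2.18 in \cite{Th1}.

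The step I expect to require the most care is the assertion that a sufficiently small neighbourhood of $(z,0,y)$ in $R_\phi$ is a bisection whose partial homeomorphism carries $y$ to $z$; this rests on the \'etale structure of $R\left(\phi^m\right)$ and on the description of the topology of $R_\phi$ as an inductive limit, and I would quote the relevant lemmas of \cite{Th1} rather than reprove them. The convolution computation of $n^*fn$, the complementation argument, and the invocation of the Gelfand correspondence are all routine.
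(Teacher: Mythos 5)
Your proposal is correct and takes essentially the same route as the paper: reduce via Theorem~\ref{A4} and the Gelfand correspondence to the claim that $C_0(U)$ is $R_{\phi}$-invariant if and only if $L = Y\setminus U$ is $\phi$-saturated, which the paper dismisses as straightforward (citing the proof of Corollary 2.18 in \cite{Th1}) and which you verify directly. Your bisection computation of $n^*fn$, the complementation argument, and the equivalence with $\phi$-saturation are all sound, so you have merely supplied the details the paper omits.
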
 


The next issue will be to determine which closed $\phi$-saturated
subsets of $Y$ correspond to primitive ideals. For a point $x \in Y$
we define the \emph{$\phi$-saturation} of $x$ to be the set
$$
H(x) = \bigcup_{n=1}^{\infty} \left\{ y \in Y : \ \phi^n(y) =
  \phi^n(x) \right\} .
$$
The closure $\overline{H(x)}$ of $H(x)$ will be referred to as the
\emph{closed $\phi$-saturation} of $x$. Observe that both $H(x)$ and
$\overline{H(x)}$ are $\phi$-saturated.

\begin{prop}\label{A17} Let $L \subseteq Y$ be a non-empty closed
  $\phi$-saturated subset. The ideal $I_L$ is primitive if and only
  $L$ is the closed $\phi$-saturation of a point in $Y$. 

\begin{proof} Since $C^*_r\left(R_{\phi}\right)$ is separable an ideal
  is primitive if and only if it is prime, cf. \cite{Pe}. We show that $I_L$ is prime
  if and only if $L =   \overline{H(x)}$ for some $x \in Y$. 

Assume
  first that $L = \overline{H(x)}$ and consider two ideals, $I_1$ and
  $I_2$, in $C^*_r\left(R_{\phi}\right)$ such that $I_1I_2 \subseteq
  I_{\overline{H(x)}}$. By Corollary \ref{A5} there are closed
  $\phi$-saturated subsets, $L_1$ and $L_2$, in $Y$ such that $I_j =
  I_{L_j}$, $j =1,2$. It follows from Corollary \ref{A5} that $\overline{H(x)} \subseteq L_1
  \cup L_2$. At least one of the $L_j$'s must contain $x$, say $x \in
  L_1$. Since $L_1$ is $\phi$-saturated and closed it follows that
  $\overline{H(x)} \subseteq L_1$, and hence that $I_1 \subseteq
  I_{\overline{H(x)}}$. Thus $I_{\overline{H(x)}}$ is prime.

Assume next that $I_L$ is prime. Let $\{U_k\}_{k=0}^{\infty}$ be
a base for the topology of $L$ consisting of non-empty sets. We will construct sequences
$\{B_k\}_{k=0}^{\infty}$ of compact non-empty
neighbourhoods in $L$ and non-negative integers
$\left\{n_k\right\}_{k=0}^{\infty}$ such that
\begin{enumerate}
\item[i)] $B_k \subseteq B_{k-1}$ for $ k \geq 1$, and
\item[ii)] $\phi^{n_{k}}\left(B_{k}\right) \subseteq
  \phi^{n_{k}}\left(U_{k}\right)$ for $ k \geq 0$.
\end{enumerate}
We start the induction by letting $B_0$ be any compact non-empty neighbourhood in
$U_0$ and $n_0 = 0$. Assume then that $B_0,B_1,B_2,\dots ,
B_m$ and $n_0,n_1, \dots, n_m$ have been
constructed. Choose a non-empty open subset $V_{m+1} \subseteq B_{m}$. Note that both of
$$
L  \backslash \bigcup_l \phi^{-l}\left(\phi^l(V_{m+1})\right) 
$$
and
$$
L  \backslash \bigcup_l \phi^{-l}\left(\phi^l(U_{m+1})\right)   
$$
are closed $\phi$-saturated subsets of $L$, and hence of $Y$, and none of them is all
of $L$. It follows from Corollary \ref{A5} and primeness of
$I_L$ that $L$ is not contained in their union, which in turn implies
that
$$
\phi^{-n_{m+1}}\left(\phi^{n_{m+1}}(V_{m+1})\right) \cap
  \phi^{-n_{m+1}}\left(\phi^{n_{m+1}}(U_{m+1})\right)
$$
is non-empty for some $n_{m+1} \in  \mathbb N$. There is therefore a
point $z \in V_{m+1}$ such that $\phi^{n_{m+1}}(z) \in
\phi^{n_{m+1}}\left(U_{m+1}\right)$, and therefore also a compact
non-empty neighbourhood $B_{m+1} \subseteq V_{m+1}$ of $z$ such that $\phi^{n_{m+1}}(B_{m+1}) \subseteq
\phi^{n_{m+1}}\left(U_{m+1}\right)$. This completes the induction. Let
$x \in \bigcap_m B_m$. By construction every $U_k$ contains an element
from $H(x)$. It follows that $\overline{H(x)}
= L$.  
\end{proof}
\end{prop}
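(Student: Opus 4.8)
The plan is to establish the two implications of the equivalence separately, reducing everything to the ideal/closed-set dictionary of Corollary~\ref{A5} together with the fact that $C^*_r\left(R_{\phi}\right)$ is separable, so that primitivity of an ideal is equivalent to primeness (this is standard, cf. \cite{Pe}). Thus the whole statement becomes the purely topological assertion that $I_L$ is prime if and only if $L = \overline{H(x)}$ for some $x \in Y$.

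For the ``if'' direction, suppose $L = \overline{H(x)}$ and take ideals $I_1, I_2$ with $I_1 I_2 \subseteq I_L$. Using Corollary~\ref{A5} write $I_j = I_{L_j}$ for closed $\phi$-saturated sets $L_j$; the product of ideals corresponding to closed sets is the ideal corresponding to the intersection of the open complements, equivalently the union $L_1 \cup L_2$, so $I_1 I_2 \subseteq I_L$ translates into $L \subseteq L_1 \cup L_2$. In particular $x$ lies in $L_1$ or $L_2$, say $x \in L_1$; since $L_1$ is closed and $\phi$-saturated and contains $x$, it contains $H(x)$ and hence $\overline{H(x)} = L$, so $I_1 = I_{L_1} \subseteq I_L$. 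That proves $I_L$ is prime.

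For the ``only if'' direction, assume $I_L$ prime. The idea is to build a point $x \in L$ whose $\phi$-saturation meets every member of a countable base $\{U_k\}$ for the (second countable, being a subspace of the metrizable $Y$) topology of $L$; then $H(x)$ is dense in $L$, and since $\overline{H(x)}$ is closed and $\phi$-saturated it must equal $L$. The construction is a Baire-category style nested-neighbourhood argument: inductively choose compact neighbourhoods $B_k \supseteq B_{k+1}$ and integers $n_k$ with $\phi^{n_k}(B_k) \subseteq \phi^{n_k}(U_k)$. The crucial input at each step is that $L$ is not the union of the two proper closed $\phi$-saturated sets obtained by removing the saturations of a small open set $V_{m+1} \subseteq B_m$ and of $U_{m+1}$; primeness (via Corollary~\ref{A5}, since neither set is all of $L$) forbids this, which forces, for some $n_{m+1}$, the sets $\phi^{-n_{m+1}}(\phi^{n_{m+1}}(V_{m+1}))$ and $\phi^{-n_{m+1}}(\phi^{n_{m+1}}(U_{m+1}))$ to intersect, yielding a point $z \in V_{m+1}$ with $\phi^{n_{m+1}}(z) \in \phi^{n_{m+1}}(U_{m+1})$ and then, by continuity, a compact neighbourhood $B_{m+1} \subseteq V_{m+1}$ of $z$ with the desired property. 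Taking $x \in \bigcap_m B_m$ (non-empty by compactness and nesting) finishes the argument.

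The main obstacle is making the inductive step rigorous: one must verify that the two sets $L \setminus \bigcup_l \phi^{-l}(\phi^l(V_{m+1}))$ and $L \setminus \bigcup_l \phi^{-l}(\phi^l(U_{m+1}))$ really are closed (not just $\phi$-saturated) and proper, and then correctly translate ``$L$ not contained in their union'' through Corollary~\ref{A5} and primeness into the non-emptiness of a finite-level intersection $\phi^{-n}(\phi^n(V)) \cap \phi^{-n}(\phi^n(U))$ for some single $n = n_{m+1}$ --- this requires knowing that the union over $l$ of these saturations is already achieved, in the relevant sense, at a finite stage, which is where local homeomorphy of $\phi$ and compactness of $Y$ enter. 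Everything else (the reduction to primeness, the topological bookkeeping, extracting $x$ from the nested compacta, and concluding $\overline{H(x)} = L$) is routine.
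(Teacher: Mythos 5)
Your proposal follows the paper's proof essentially verbatim: the same reduction of primitivity to primeness via separability, the same translation of $I_1I_2\subseteq I_L$ into $L\subseteq L_1\cup L_2$ via Corollary~\ref{A5} for the ``if'' direction, and the same nested compact-neighbourhood construction producing a point $x$ with $H(x)$ dense in $L$ for the ``only if'' direction. One small correction to your closing remark: passing to a single exponent $n_{m+1}$ needs only that the sets $\phi^{-l}\left(\phi^l(W)\right)$ increase with $l$ (openness of the local homeomorphism $\phi$ is what makes the two complements closed), not compactness of $Y$.
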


\section{On the ideals of $C^*_r\left(\Gamma_{\varphi}\right)$}\label{gaugeac}

The $C^*$-algebra $C^*_r\left(\Gamma_{\varphi}\right)$ carries a
canonical circle action $\beta$, called the \emph{gauge action}, defined such that
$$
\beta_{\lambda}(f)(x,k,y) = \lambda^k f(x,k,y)
$$
when $f \in C_c\left(\Gamma_{\varphi}\right)$ and $\lambda \in \mathbb
T$, cf. \cite{Th1}. As the next step we describe in this section the
gauge-invariant ideals in $C^*_r\left(\Gamma_{\varphi}\right)$.

Consider first the function $m : X \to \mathbb N$ defined such that
\begin{equation}\label{m-funk}
m(x) = \# \left\{ y \in X : \ \varphi(y) = \varphi(x) \right\} .
\end{equation}
As shown in \cite{Th1}, $m \in D_{R(\varphi)} \subseteq D_{R_{\varphi}}$. Define a function $V_{\varphi} : \Gamma_{\varphi} \to \mathbb C$ such that
$$
V_{\varphi} (x,k,y) = \begin{cases} m(x)^{-\frac{1}{2}} & \ \text{when} \ k = 1 \
  \text{and} \ y = \varphi(x) \\ 0 & \ \text{otherwise.} \end{cases}
$$
Then $V_{\varphi}$ is the product $V_{\varphi} = m^{-\frac{1}{2}} 1_{\Gamma_{\varphi}(1,0)}$ 
in $C^*_r\left(\Gamma_{\varphi}\right)$ and in fact an isometry which
induces an endomorphism $\widehat{\varphi}$ of
$C^*_r\left(R_{\varphi}\right)$, viz.
$$
\widehat{\varphi}(a) = V_{\varphi}aV_{\varphi}^*$$
Together with $C^*_r\left(R_{\varphi}\right)$ the isometry
$V_{\varphi}$ generates $C^*_r\left(\Gamma_{\varphi}\right)$ which in
this way becomes a crossed product $C^*_r\left(R_{\varphi}\right)
\times_{\widehat{\varphi}} \mathbb N$ in the sense of Stacey,
cf. \cite{St} and  \cite{Th1}; in particular Theorem 4.6 in \cite{Th1}.

\subsection{Gauge invariant ideals }

Let $C^*_r\left(\Gamma_{\varphi}\right)^{\mathbb T}$ denote the fixed
point algebra of the gauge action.

\begin{lemma}\label{kalgs} For each $k \in \mathbb N$ we have that ${V_{\varphi}^*}^k
  C^*_r\left(R_{\varphi}\right)V_{\varphi}^k $ is a $C^*$-subalgebra of
  $C^*_r\left(\Gamma_{\varphi}\right)^{\mathbb T}$,
\begin{equation}\label{bkr0}
{V_{\varphi}^*}^kC^*_r\left(R_{\varphi}\right)V_{\varphi}^k \subseteq
  {V_{\varphi}^*}^{k+1}C^*_r\left(R_{\varphi}\right)V_{\varphi}^{k+1},
\end{equation} 
and
\begin{equation}\label{bkr}
C^*_r\left(\Gamma_{\varphi}\right)^{\mathbb T} =
\overline{\bigcup_{k=0}^{\infty} {V_{\varphi}^*}^k
  C^*_r\left(R_{\varphi}\right)V_{\varphi}^k }.
\end{equation}
\begin{proof} Since $V_{\varphi}^k{V_{\varphi}^*}^k \in C^*_r\left(R_{\varphi}\right)$, it
  is easy to check that ${V_{\varphi}^*}^k
  C^*_r\left(R_{\varphi}\right)V_{\varphi}^k$ is a $*$-algebra. To see that
  ${V_{\varphi}^*}^kC^*_r\left(R_{\varphi}\right)V_{\varphi}^k$ is closed let $\{a_n\}$ be a sequence
  in $C^*_r\left(R_{\varphi}\right)$ such that
  $\left\{{V_{\varphi}^*}^ka_nV_{\varphi}^k\right\}$ converges in
  $C^*_r\left(\Gamma_{\varphi}\right)$, say $\lim_{n \to \infty}
  {V_{\varphi}^*}^ka_nV_{\varphi}^k = b$. It follows that
  $$
\left\{V_{\varphi}^k{V_{\varphi}^*}^ka_nV_{\varphi}^k{V_{\varphi}^*}^k\right\}
$$ 
is Cauchy in
  $C^*_r\left(R_{\varphi}\right)$ and hence convergent, say to $a \in
  C^*_r\left(R_{\varphi}\right)$. But then $b = \lim_{n \to \infty}
  {V_{\varphi}^*}^k a_nV_{\varphi}^k = \lim_{n \to \infty}
 {V_{\varphi}^*}^kV_{\varphi}^k {V_{\varphi}^*}^k a_nV_{\varphi}^k{V_{\varphi}^*}^kV_{\varphi}^k = {V_{\varphi}^*}^kaV_{\varphi}^k$. It follows that
 $$
{V_{\varphi}^*}^kC^*_r\left(R_{\varphi}\right)V_{\varphi}^k
$$ 
is closed and hence a
 $C^*$-subalgebra. The inclusion (\ref{bkr0}) follows from the observation that $V_{\varphi}^k = V_{\varphi}^*V_{\varphi}^{k+1}$ and $V_{\varphi}C^*_r\left(R_{\varphi}\right)V_{\varphi}^*
\subseteq C^*_r\left(R_{\varphi}\right)$.

It is straightforward to check that $\beta_{\lambda}(V_{\varphi}) =
  \lambda V_{\varphi}$ and that $C^*_r\left(R_{\varphi}\right) \subseteq
  C^*_r\left(\Gamma_{\varphi}\right)^{\mathbb T}$. The inclusion
  $\supseteq$ in (\ref{bkr}) follows from this. To obtain the other, let $x
  \in C^*_r\left(\Gamma_{\varphi}\right)^{\mathbb T}$ and let
$\epsilon > 0$. It follows from Theorem 4.6 of \cite{Th1} and Lemma
1.1. of \cite{BoKR} that there is an $n \in \mathbb N$ and an element 
$$
y \in \Span \bigcup_{i,j \leq n} {V_{\varphi}^*}^i
C^*_r\left(R_{\varphi}\right)V_{\varphi}^j 
$$
such that $\left\|x - y\right\| \leq \epsilon$. Then $\left\| x -
  \int_{\mathbb T} \beta_{\lambda}(y) \ d\lambda\right\| \leq
\epsilon$ and since
$$
\int_{\mathbb T} \beta_{\lambda}(y) \ d\lambda \in
{V_{\varphi}^*}^nC^*_r\left(R_{\varphi}\right)V_{\varphi}^n,
$$
we see that (\ref{bkr}) holds.
\end{proof}
\end{lemma}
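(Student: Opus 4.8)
The plan is to prove the three assertions in order, using four elementary facts about the isometry $V_\varphi$: that ${V_\varphi^*}^k V_\varphi^k = 1$ for all $k$, that the range projection $q_k := V_\varphi^k {V_\varphi^*}^k$ lies in $C^*_r\left(R_\varphi\right)$, that conjugation by $V_\varphi$ leaves $C^*_r\left(R_\varphi\right)$ invariant (the defining property of $\widehat\varphi$), and that $\beta_\lambda(V_\varphi) = \lambda V_\varphi$.

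For the first assertion, write $B_k := {V_\varphi^*}^k C^*_r\left(R_\varphi\right) V_\varphi^k$. It is a linear subspace, stable under the adjoint since $({V_\varphi^*}^k a V_\varphi^k)^* = {V_\varphi^*}^k a^* V_\varphi^k$, and stable under multiplication because $({V_\varphi^*}^k a V_\varphi^k)({V_\varphi^*}^k b V_\varphi^k) = {V_\varphi^*}^k (a q_k b) V_\varphi^k$ with $a q_k b \in C^*_r\left(R_\varphi\right)$. That $B_k$ sits inside the fixed point algebra follows from multiplicativity of $\beta_\lambda$ together with $\beta_\lambda(V_\varphi) = \lambda V_\varphi$ and $C^*_r\left(R_\varphi\right) \subseteq C^*_r\left(\Gamma_\varphi\right)^{\mathbb T}$, since then $\beta_\lambda\left({V_\varphi^*}^k a V_\varphi^k\right) = \bar\lambda^k \lambda^k\, {V_\varphi^*}^k a V_\varphi^k$. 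Finally, to see $B_k$ is norm closed, suppose ${V_\varphi^*}^k a_n V_\varphi^k \to b$; compressing by $q_k$ on both sides gives $q_k a_n q_k \to V_\varphi^k b {V_\varphi^*}^k$, and since each $q_k a_n q_k$ lies in $C^*_r\left(R_\varphi\right)$ so does the limit, say it equals $c$; then ${V_\varphi^*}^k c V_\varphi^k = ({V_\varphi^*}^k V_\varphi^k)\, b\, ({V_\varphi^*}^k V_\varphi^k) = b$, exhibiting $b \in B_k$.

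For the inclusion $B_k \subseteq B_{k+1}$, use $V_\varphi^k = V_\varphi^* V_\varphi^{k+1}$ to rewrite ${V_\varphi^*}^k a V_\varphi^k = {V_\varphi^*}^{k+1}\left(V_\varphi a V_\varphi^*\right) V_\varphi^{k+1}$ and note $V_\varphi a V_\varphi^* \in C^*_r\left(R_\varphi\right)$. For the last assertion, the inclusion $\supseteq$ is immediate from the first assertion and closedness of the fixed point algebra. For $\subseteq$, take $x$ in the fixed point algebra and $\epsilon > 0$; invoking the description of $C^*_r\left(\Gamma_\varphi\right)$ as the Stacey crossed product $C^*_r\left(R_\varphi\right) \times_{\widehat\varphi} \mathbb N$ (Theorem 4.6 of \cite{Th1}) together with Lemma 1.1 of \cite{BoKR}, approximate $x$ within $\epsilon$ by an element $y$ of $\Span \bigcup_{i,j \leq n} {V_\varphi^*}^i C^*_r\left(R_\varphi\right) V_\varphi^j$. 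Averaging over the gauge action annihilates the terms with $i \neq j$ (because $\int_{\mathbb T} \lambda^{j-i}\, d\lambda = 0$) and fixes $x$, so $\left\|x - \int_{\mathbb T} \beta_\lambda(y)\, d\lambda\right\| \leq \epsilon$ while $\int_{\mathbb T} \beta_\lambda(y)\, d\lambda \in \sum_{i \leq n} B_i \subseteq B_n$ by the nesting just established; letting $\epsilon \to 0$ gives the claim.

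The genuinely non-trivial input is the density, inside $C^*_r\left(\Gamma_\varphi\right)$, of the linear span $\Span \bigcup_{i,j} {V_\varphi^*}^i C^*_r\left(R_\varphi\right) V_\varphi^j$: this is exactly where the Stacey crossed-product structure of $C^*_r\left(\Gamma_\varphi\right)$ and the cited lemma from \cite{BoKR} do the work, and it is the step I expect to be the main obstacle. Everything else is careful bookkeeping with the relations $V_\varphi^* V_\varphi = 1$, $q_k \in C^*_r\left(R_\varphi\right)$, $V_\varphi C^*_r\left(R_\varphi\right) V_\varphi^* \subseteq C^*_r\left(R_\varphi\right)$, and the equivariance of the gauge action.
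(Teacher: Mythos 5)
Your proposal is correct and follows essentially the same route as the paper's proof: the same use of $q_k = V_\varphi^k{V_\varphi^*}^k \in C^*_r\left(R_{\varphi}\right)$ and compression by $V_\varphi^k$ for the $*$-algebra and closedness claims, the identity $V_\varphi^k = V_\varphi^*V_\varphi^{k+1}$ for the nesting, and Theorem 4.6 of \cite{Th1} with Lemma 1.1 of \cite{BoKR} plus averaging over the gauge action for the density in the fixed point algebra. You only spell out a few details (the vanishing of the cross terms $i \neq j$ and the nesting $\sum_{i\leq n} B_i \subseteq B_n$) that the paper leaves implicit.
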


\begin{lemma}\label{ident} Let $I$ be a gauge invariant ideal in
  $C^*_r\left(\Gamma_{\varphi}\right)$. It follows that
$$
I = \left\{ a \in C^*_r\left(\Gamma_{\varphi}\right) : \ \int_{\mathbb
    T} \beta_{\lambda}(a^*a) \ d \lambda \in I \cap
  C^*_r\left(\Gamma_{\varphi}\right)^{\mathbb T} \right\} .
$$
\begin{proof} Set $B = C^*_r\left(\Gamma_{\varphi}\right)/I$. Since
  $I$ is gauge-invariant there is an action $\hat{\beta}$ of $\mathbb
  T$ on $B$ such that $q \circ \beta = \hat{\beta} \circ q$, where $q
  : C^*_r\left(\Gamma_{\varphi}\right) \to B$ is the quotient
  map. Thus, if 
$$
y \in \left\{ a \in C^*_r\left(\Gamma_{\varphi}\right) : \ \int_{\mathbb
    T} \beta_{\lambda}(a^*a) \ d \lambda \in I \cap
  C^*_r\left(\Gamma_{\varphi}\right)^{\mathbb T} \right\} ,
$$
we find that 
$$
\int_{\mathbb T} \hat{\beta}_{\lambda}(q(y^*y)) \ d \lambda =
q\left(\int_{\mathbb T} \beta_{\lambda}(y^*y) \ d\lambda \right) = 0 .
$$
Since $\int_{\mathbb T} \hat{\beta}_{\lambda}( \cdot ) \ d \lambda$ is
faithful we conclude that $q(y) = 0$, i.e. $y \in I$. This establishes the non-trivial part of the asserted identity.
\end{proof}
\end{lemma}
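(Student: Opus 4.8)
The plan is to prove the two inclusions separately; the inclusion of $I$ into the set on the right is routine, and the reverse inclusion carries the content. Throughout write $\Phi(x) = \int_{\mathbb{T}} \beta_{\lambda}(x)\, d\lambda$ for the canonical conditional expectation from $C^*_r(\Gamma_{\varphi})$ onto the fixed point algebra $C^*_r(\Gamma_{\varphi})^{\mathbb{T}}$; note that $\Phi(x)$ is automatically gauge invariant, by invariance of Haar measure on $\mathbb{T}$, so the membership condition defining the right-hand set is just $\Phi(a^*a) \in I$.

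For the easy direction, if $a \in I$ then $a^*a \in I$, and gauge invariance gives $\beta_{\lambda}(a^*a) \in I$ for every $\lambda \in \mathbb{T}$; since $I$ is norm-closed and $\lambda \mapsto \beta_{\lambda}(a^*a)$ is norm-continuous, the average $\Phi(a^*a)$ again lies in $I$. Hence $a$ belongs to the right-hand set.

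For the reverse inclusion I would pass to the quotient $B = C^*_r(\Gamma_{\varphi})/I$, with quotient map $q$. Because $I$ is gauge invariant we have $\beta_{\lambda}(I) = I$ for all $\lambda$, so each $\beta_{\lambda}$ descends to an automorphism $\hat{\beta}_{\lambda}$ of $B$; the resulting circle action $\hat{\beta}$ is strongly continuous (since $q$ is norm-decreasing and $\beta$ is strongly continuous) and satisfies $q \circ \beta_{\lambda} = \hat{\beta}_{\lambda} \circ q$. Integrating over $\mathbb{T}$ gives $q \circ \Phi = \widehat{\Phi} \circ q$, where $\widehat{\Phi}(x) = \int_{\mathbb{T}} \hat{\beta}_{\lambda}(x)\, d\lambda$ is the conditional expectation onto $B^{\mathbb{T}}$. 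Now take $a$ in the right-hand set, so $q(\Phi(a^*a)) = 0$, whence $\widehat{\Phi}(q(a)^* q(a)) = \widehat{\Phi}(q(a^*a)) = q(\Phi(a^*a)) = 0$. The decisive input is the standard fact that the conditional expectation attached to a continuous action of a compact group is faithful, so $\widehat{\Phi}(b^*b) = 0$ forces $b = 0$; applying this we get $q(a) = 0$, i.e.\ $a \in I$, completing the proof. I expect the only point needing a line of care to be the verification that $\hat{\beta}$ is strongly continuous and that the intertwining relation $q\circ\Phi = \widehat{\Phi}\circ q$ survives integration; the genuine mathematical substance is carried entirely by faithfulness of $\widehat{\Phi}$.
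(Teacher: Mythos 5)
Your argument is correct and coincides with the paper's proof: both directions are handled as in the paper, with the substantive inclusion established by descending to the quotient $B = C^*_r\left(\Gamma_{\varphi}\right)/I$, intertwining the averaging map with the quotient map, and invoking faithfulness of the conditional expectation associated to the induced circle action. The only difference is that you spell out the easy inclusion and the routine continuity checks, which the paper leaves implicit.
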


\begin{lemma}\label{intersectunique} Let $I,I'$ be gauge invariant ideals in $C^*_r\left(\Gamma_{\varphi}\right)$. Then
$$
I \cap D_{R_{\varphi}} \subseteq I' \cap D_{R_{\varphi}} \ \Rightarrow \ I \subseteq
I' .
$$
\begin{proof} Assume that $I \cap D_{R_{\varphi}} \subseteq I' \cap
  D_{R_{\varphi}}$. It follows from Lemma \ref{A7} that $I \cap
  C^*_r\left(R_{\varphi}\right) \subseteq  I' \cap
C^*_r\left(R_{\varphi}\right)$. Then
\begin{equation*}
\begin{split}
&I \cap
  {V_{\varphi}^*}^kC^*_r\left(R_{\varphi}\right)V_{\varphi}^k = {V_{\varphi}^*}^k\left(I \cap
  C^*_r\left(R_{\varphi}\right)\right)V_{\varphi}^k \\
& \ \ \ \ \ \ \ \ \ \ \ \ \ \ \subseteq {V_{\varphi}^*}^k\left(I' \cap
  C^*_r\left(R_{\varphi}\right)\right)V_{\varphi}^k = I' \cap
  {V_{\varphi}^*}^kC^*_r\left(R_{\varphi}\right)V_{\varphi}^k
\end{split}
\end{equation*}
for all $k \in \mathbb N$. Hence Lemma \ref{kalgs} implies
that $I \cap C_r^*\left(\Gamma_{\varphi}\right)^{\mathbb T} \subseteq I' \cap
C_r^*\left(\Gamma_{\varphi}\right)^{\mathbb T}$. It follows then from Lemma \ref{ident} that $I \subseteq I'$.

\end{proof}
\end{lemma}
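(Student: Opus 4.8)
The plan is to ascend in three stages — from $D_{R_{\varphi}}$ to $C^*_r\left(R_{\varphi}\right)$, then to the fixed point algebra $C^*_r\left(\Gamma_{\varphi}\right)^{\mathbb{T}}$, and finally to all of $C^*_r\left(\Gamma_{\varphi}\right)$ — using Lemmas \ref{A7}, \ref{kalgs} and \ref{ident} in that order. First I would observe that $I \cap C^*_r\left(R_{\varphi}\right)$ and $I' \cap C^*_r\left(R_{\varphi}\right)$ are ideals in $C^*_r\left(R_{\varphi}\right)$, and that since $D_{R_{\varphi}} \subseteq C^*_r\left(R_{\varphi}\right)$ one has $\left(I \cap C^*_r\left(R_{\varphi}\right)\right) \cap D_{R_{\varphi}} = I \cap D_{R_{\varphi}}$, and likewise for $I'$. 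The hypothesis therefore says that these two ideals of $C^*_r\left(R_{\varphi}\right)$ have nested intersections with $D_{R_{\varphi}}$, and Lemma \ref{A7} promotes this to $I \cap C^*_r\left(R_{\varphi}\right) \subseteq I' \cap C^*_r\left(R_{\varphi}\right)$.

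Next I would push this containment through the inductive limit (\ref{bkr}). The crucial point is the identity $I \cap {V_{\varphi}^*}^k C^*_r\left(R_{\varphi}\right) V_{\varphi}^k = {V_{\varphi}^*}^k \left(I \cap C^*_r\left(R_{\varphi}\right)\right) V_{\varphi}^k$, valid for each $k \in \mathbb{N}$ because $V_{\varphi}$ is an isometry and $I$ is an ideal: if $a = {V_{\varphi}^*}^k b V_{\varphi}^k$ lies in $I$ with $b \in C^*_r\left(R_{\varphi}\right)$, then $V_{\varphi}^k a {V_{\varphi}^*}^k$ lies in $I$ and equals $\left(V_{\varphi}^k {V_{\varphi}^*}^k\right) b \left(V_{\varphi}^k {V_{\varphi}^*}^k\right)$, which lies in $C^*_r\left(R_{\varphi}\right)$ since $V_{\varphi}^k {V_{\varphi}^*}^k$ does, while ${V_{\varphi}^*}^k \left(V_{\varphi}^k a {V_{\varphi}^*}^k\right) V_{\varphi}^k = a$ because ${V_{\varphi}^*}^k V_{\varphi}^k = 1$. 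The same identity holds with $I'$ in place of $I$, so the containment from the previous paragraph yields $I \cap {V_{\varphi}^*}^k C^*_r\left(R_{\varphi}\right) V_{\varphi}^k \subseteq I' \cap {V_{\varphi}^*}^k C^*_r\left(R_{\varphi}\right) V_{\varphi}^k$ for every $k$. Taking the closure of the union over $k$ and invoking (\ref{bkr}), together with the standard fact that an ideal of a $C^*$-inductive limit $\overline{\bigcup_k B_k}$ coincides with the closure of the union of its intersections with the $B_k$, I conclude that $I \cap C^*_r\left(\Gamma_{\varphi}\right)^{\mathbb{T}} \subseteq I' \cap C^*_r\left(\Gamma_{\varphi}\right)^{\mathbb{T}}$.

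The final stage is then immediate from Lemma \ref{ident}: given $a \in I$, the positive element $\int_{\mathbb{T}} \beta_{\lambda}(a^*a)\, d\lambda$ lies in $I \cap C^*_r\left(\Gamma_{\varphi}\right)^{\mathbb{T}} \subseteq I' \cap C^*_r\left(\Gamma_{\varphi}\right)^{\mathbb{T}}$, so the characterization of $I'$ provided by Lemma \ref{ident} forces $a \in I'$; hence $I \subseteq I'$.

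I expect the middle stage to be the only place demanding care. One must make sure that intersecting the ideal with the increasing subalgebras ${V_{\varphi}^*}^k C^*_r\left(R_{\varphi}\right) V_{\varphi}^k$ and passing to the closed union loses nothing — that it recovers $I \cap C^*_r\left(\Gamma_{\varphi}\right)^{\mathbb{T}}$ exactly — since this compatibility of ideals with $C^*$-inductive limits is the hinge on which the whole ascent turns and is best stated (or cited) explicitly rather than waved through. The conjugation identity for $I \cap {V_{\varphi}^*}^k C^*_r\left(R_{\varphi}\right) V_{\varphi}^k$ also merits the short verification indicated above, but it reduces entirely to $V_{\varphi}$ being an isometry and $V_{\varphi} C^*_r\left(R_{\varphi}\right) V_{\varphi}^* \subseteq C^*_r\left(R_{\varphi}\right)$, both already established in Lemma \ref{kalgs}.
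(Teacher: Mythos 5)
Your proposal is correct and follows essentially the same route as the paper's proof: Lemma \ref{A7} to pass from $D_{R_{\varphi}}$ to $C^*_r\left(R_{\varphi}\right)$, the identity $I \cap {V_{\varphi}^*}^k C^*_r\left(R_{\varphi}\right) V_{\varphi}^k = {V_{\varphi}^*}^k\left(I \cap C^*_r\left(R_{\varphi}\right)\right)V_{\varphi}^k$ combined with the inductive limit description (\ref{bkr}) to reach the fixed point algebra, and Lemma \ref{ident} to conclude. The only difference is that you spell out the conjugation identity and the standard compatibility of ideals with $C^*$-inductive limits, which the paper leaves implicit.
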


\begin{prop}\label{gaugeideals} The map $J \mapsto \widehat{J}$, where
$$
\widehat{J} = \left\{ a \in C^*_r\left(\Gamma_{\varphi}\right) : \
  P_{\Gamma_{\varphi}}(a^*a) \in J\right\},
$$ 
is a
  bijection from the $\Gamma_{\varphi}$-invariant ideals of
  $D_{\Gamma_{\varphi}}$ onto the gauge invariant ideals of
  $C^*_r\left(\Gamma_{\varphi}\right)$. Its inverse is the map $I
  \mapsto I \cap D_{\Gamma_{\varphi}}$.
\begin{proof} Since $P_{\Gamma_{\varphi}} \circ \beta =  P_{\Gamma_{\varphi}}$ the ideal $\widehat{J}$ is gauge invariant. It
  follows from Lemma 2.13 of \cite{Th1} that $\widehat{J} \cap D_{\Gamma_{\varphi}} = J$
so it suffices to show that
\begin{equation}\label{gaugeb}
\widehat{I \cap D_{\Gamma_{\varphi}}} = I
\end{equation}
when $I$ is a gauge invariant ideal in
$C^*_r\left(\Gamma_{\varphi}\right)$. It follows from Lemma 2.13 of
\cite{Th1} that
$\widehat{I \cap D_{\Gamma_{\varphi}}} \cap D_{\Gamma_{\varphi}} = I
\cap D_{\Gamma_{\varphi}}$.
Since $D_{R_{\varphi}} \subseteq D_{\Gamma_{\varphi}}$ this implies that
$\widehat{I \cap D_{\Gamma_{\varphi}}} \cap D_{R_{\varphi}} = I \cap
D_{R_{\varphi}}$.
Then (\ref{gaugeb}) follows from Lemma \ref{intersectunique}.

\end{proof}
\end{prop}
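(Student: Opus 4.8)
The plan is to mimic, essentially verbatim, the proof of Theorem~\ref{A4}, but with $C^*_r\left(R_{\varphi}\right)$ replaced by $C^*_r\left(\Gamma_{\varphi}\right)$, ``ideal'' replaced by ``gauge invariant ideal'', $D_{R_{\varphi}}$ by $D_{\Gamma_{\varphi}}$, and Lemma~\ref{A7} by its gauge-invariant analogue Lemma~\ref{intersectunique}. First I would record two preliminary facts about $\widehat{J}$ for a $\Gamma_{\varphi}$-invariant ideal $J$ of $D_{\Gamma_{\varphi}}$: that $\widehat{J}$ is an ideal with $\widehat{J} \cap D_{\Gamma_{\varphi}} = J$, which is Lemma 2.13 of \cite{Th1} applied to the semi \'etale groupoid $\Gamma_{\varphi}$; and that $\widehat{J}$ is gauge invariant, which is immediate from $P_{\Gamma_{\varphi}} \circ \beta_{\lambda} = P_{\Gamma_{\varphi}}$ (this holds because $P_{\Gamma_{\varphi}}$ restricts functions to the unit space, where $k = 0$), since then $P_{\Gamma_{\varphi}}\bigl(\beta_{\lambda}(a)^*\beta_{\lambda}(a)\bigr) = P_{\Gamma_{\varphi}}\bigl(\beta_{\lambda}(a^*a)\bigr) = P_{\Gamma_{\varphi}}(a^*a)$, so $a \in \widehat{J}$ forces $\beta_{\lambda}(a) \in \widehat{J}$.

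The relation $\widehat{J} \cap D_{\Gamma_{\varphi}} = J$ already shows that $J \mapsto \widehat{J}$ is injective and that $I \mapsto I \cap D_{\Gamma_{\varphi}}$ is a left inverse on its range, so the only substantive point left is surjectivity: every gauge invariant ideal $I$ of $C^*_r\left(\Gamma_{\varphi}\right)$ should equal $\widehat{I \cap D_{\Gamma_{\varphi}}}$. To see this, set $J = I \cap D_{\Gamma_{\varphi}}$, which is visibly a $\Gamma_{\varphi}$-invariant ideal of $D_{\Gamma_{\varphi}}$, and observe that both $\widehat{J}$ and $I$ are gauge invariant ideals. Applying Lemma 2.13 of \cite{Th1} once more gives $\widehat{J} \cap D_{\Gamma_{\varphi}} = J = I \cap D_{\Gamma_{\varphi}}$; since $D_{R_{\varphi}} \subseteq D_{\Gamma_{\varphi}}$, intersecting further with $D_{R_{\varphi}}$ yields $\widehat{J} \cap D_{R_{\varphi}} = I \cap D_{R_{\varphi}}$. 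Two applications of Lemma~\ref{intersectunique}, one in each direction, then force $\widehat{J} = I$, completing the proof.

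The step that carries all the weight is Lemma~\ref{intersectunique}, i.e.\ the fact that a gauge invariant ideal of $C^*_r\left(\Gamma_{\varphi}\right)$ is determined by its intersection with the small abelian subalgebra $D_{R_{\varphi}}$; the proposition itself is then only a short diagram chase. That comparison lemma is in turn built on the inductive-limit description $C^*_r\left(\Gamma_{\varphi}\right)^{\mathbb T} = \overline{\bigcup_k {V_{\varphi}^*}^k C^*_r\left(R_{\varphi}\right) V_{\varphi}^k}$ of the fixed-point algebra (Lemma~\ref{kalgs}), on the reconstruction of a gauge invariant ideal from its fixed-point part (Lemma~\ref{ident}), and ultimately on Lemma~\ref{A7} together with the maximal abelianness of $D_{R\left(\varphi^n\right)}$ in $C^*_r\left(R\left(\varphi^n\right)\right)$. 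Given those tools, the one thing to be careful about in the argument above is bookkeeping: the bijection is phrased in terms of $D_{\Gamma_{\varphi}}$, whereas Lemma~\ref{intersectunique} is phrased in terms of $D_{R_{\varphi}}$, so one must pass correctly between intersections with the two algebras, using $D_{R_{\varphi}} \subseteq D_{\Gamma_{\varphi}}$ and the fact that $\widehat{J}$ and $I$ agree on the larger algebra.
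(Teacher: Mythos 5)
Your argument is correct and follows essentially the same route as the paper's own proof: gauge invariance of $\widehat{J}$ from $P_{\Gamma_{\varphi}} \circ \beta = P_{\Gamma_{\varphi}}$, the identity $\widehat{J} \cap D_{\Gamma_{\varphi}} = J$ from Lemma 2.13 of \cite{Th1}, and then, for a gauge invariant ideal $I$, comparison of $\widehat{I \cap D_{\Gamma_{\varphi}}}$ and $I$ via their common intersection with $D_{R_{\varphi}}$ and Lemma~\ref{intersectunique}. Your added remarks on where the real work lies (Lemma~\ref{intersectunique} and the fixed-point algebra decomposition) are accurate but not a deviation from the paper's proof.
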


To simplify notation, set $D = D_{\Gamma_{\phi}} = C(Y)$. Every
ideal $I$ in $C^*_r\left(\Gamma_{\phi}\right)$ determines a closed
subset $\rho(I)$ of $Y$ defined such that
\begin{equation}\label{rho}
\rho(I) = \left\{ y \in Y : \ f(y) = 0 \ \forall f \in
  I \cap D \right\} .
\end{equation}

We say that a subset $F \subseteq Y$ is
\emph{totally $\phi$-invariant} when $\phi^{-1}(F) = F$.

\begin{lemma}\label{psiinv} $\rho(I)$ is totally $\phi$-invariant for
  every ideal $I$ in $C^*_r\left(\Gamma_{\phi}\right)$.
\begin{proof} It suffices to show that $Y\setminus
  \rho(I)$ is totally
  $\phi$-invariant, which is what we will do. Assume first that $x\in
  Y \setminus \rho(I)$. Then there is an $f\in
  I\cap D$ such that $f(x)\neq 0$. 
  Choose an open bisection $W \subseteq \Gamma_{\phi}$ such that
  $(x,1,\phi(x)) \in W$. Choose then $\eta\in
  C_c(\Gamma_\phi)$ such that $\eta((x,1,\phi(x))=1$ and $\supp \eta
  \subseteq W$. It is not
  difficult to check that $\eta^*f\eta\in D$ and that
  $\eta^*f\eta(\phi(x))= f(x)\ne 0$, and since 
  $\eta^*f\eta\in I$, it follows that
  $\phi(x)\in Y \setminus \rho(I)$. Assume then that $\phi(x)\in Y
  \setminus \rho(I)$. Then there is a
  $g\in I\cap D$ such that $g(\phi(x))\ne 0$. Choose an open bisection $W \subseteq \Gamma_{\phi}$ such that
  $(x,1,\phi(x)) \in W$ and $\eta\in
  C_c(\Gamma_\phi)$ such that $\eta((x,1,\phi(x))=1$ and $\supp \eta
  \subseteq W$. Then $\eta g\eta^*\in D$ and 
  $\eta g \eta^*(\phi(x))= g(x)\ne 0$, and since 
  $\eta g\eta^*\in I$, this shows that
  $x \in Y\backslash \rho(I)$, proving that
  $\phi^{-1}\left(\rho(I)\right) = \rho(I)$.
\end{proof}
\end{lemma}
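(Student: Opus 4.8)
The plan is to prove the equivalent assertion that the open set $Y \setminus \rho(I)$ is totally $\phi$-invariant, i.e. that $x \in Y \setminus \rho(I)$ if and only if $\phi(x) \in Y \setminus \rho(I)$; since preimages commute with complements, $\phi^{-1}\bigl(Y\setminus\rho(I)\bigr) = Y\setminus\phi^{-1}(\rho(I))$, so this gives $\phi^{-1}(\rho(I)) = \rho(I)$ at once. By definition of $\rho(I)$ (see \eqref{rho}) and of $D = D_{\Gamma_\phi} = C(Y)$, the condition $x \notin \rho(I)$ means exactly that some $f \in I \cap D$ has $f(x) \neq 0$. So the real content is to move non-vanishing of elements of $I \cap D$ across the single arrow $(x,1,\phi(x))$ of $\Gamma_\phi$, in both directions.

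First I would use that $\phi$ is a local homeomorphism to pick an open bisection $W \subseteq \Gamma_\phi$ with $(x,1,\phi(x)) \in W$ — any sufficiently small neighbourhood of $(x,1,\phi(x))$ inside $\Gamma_\phi(1,0)$ works — together with $\eta \in C_c(\Gamma_\phi) \subseteq C^*_r\left(\Gamma_\phi\right)$ such that $\supp \eta \subseteq W$ and $\eta(x,1,\phi(x)) = 1$. The computational core is then the two identities, valid for every $f \in D$,
$$
\eta^* f \eta \in D, \qquad \bigl(\eta^* f \eta\bigr)(\phi(x)) = |\eta(x,1,\phi(x))|^2 f(x) = f(x),
$$
$$
\eta f \eta^* \in D, \qquad \bigl(\eta f \eta^*\bigr)(x) = |\eta(x,1,\phi(x))|^2 f(\phi(x)) = f(\phi(x)).
$$
These are obtained by unwinding the convolution product: since $W$ is a bisection, $r|_W$ and $s|_W$ are injective, so $W^{-1}W$ and $WW^{-1}$ lie in the unit space, which forces the relevant sums to collapse to a single term supported on the diagonal; continuity and membership in $D = C^*_r\left(\Gamma_\phi\right)\cap B(X) = C(Y)$ are then automatic because $\eta^* f \eta, \eta f \eta^* \in C_c(\Gamma_\phi) \subseteq \alg^* \Gamma_\phi$ and the diagonal is a clopen subset of $\Gamma_\phi$.

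Granting these identities the rest is immediate. If $x \notin \rho(I)$, choose $f \in I \cap D$ with $f(x) \neq 0$; as $I$ is an ideal, $\eta^* f \eta \in I \cap D$, and it is nonzero at $\phi(x)$, so $\phi(x) \notin \rho(I)$. Conversely, if $\phi(x) \notin \rho(I)$, choose $g \in I \cap D$ with $g(\phi(x)) \neq 0$; then $\eta g \eta^* \in I \cap D$ is nonzero at $x$, so $x \notin \rho(I)$. Hence $Y \setminus \rho(I)$, and therefore $\rho(I)$, is totally $\phi$-invariant. The one genuinely delicate point is the verification of the two displayed identities — that the triple products land back in the diagonal subalgebra $D$ and that their values there are as claimed — but once the bisection bookkeeping is organised this reduces to a direct unwinding of the convolution formula and involves no new idea.
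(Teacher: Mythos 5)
Your proposal is correct and follows essentially the same route as the paper: restrict to an open bisection $W$ through $(x,1,\phi(x))$, pick $\eta\in C_c(\Gamma_\phi)$ supported in $W$ with $\eta(x,1,\phi(x))=1$, and use the triple products $\eta^*f\eta$ and $\eta g\eta^*$ to transport non-vanishing of elements of $I\cap D$ along the arrow in both directions. Your evaluation formulae are the right ones (in fact $(\eta g\eta^*)(x)=g(\phi(x))$, which silently corrects the misprinted evaluation point in the paper's second case), and your bisection bookkeeping for why the products land in $D$ is exactly the "not difficult to check" step of the original argument.
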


Thus every ideal in $C^*_r\left(\Gamma_{\phi}\right)$ gives rise to
a closed totally $\phi$-invariant subset of $Y$. To go in
the other direction, let $F$ be a closed totally $\phi$-invariant
subset of $Y$. Then $Y\backslash F$ is open and
totally $\phi$-invariant so that the reduction
$\Gamma_{\phi}|_{Y \backslash F}$ is an \'etale groupoid in
its own right, cf. \cite{An}. In fact, $\phi$ restricts to local homeomorphic
surjections $\phi : Y \backslash F \to Y \backslash F$ and $\phi : F
\to F$, and 
$$
\Gamma_{\phi}|_{Y \backslash F} =  \Gamma_{\phi|_{Y \backslash F}} .
$$
Note that $C^*_r\left( \Gamma_{\phi}|_{Y \backslash
    F}\right) = C^*_r\left( \Gamma_{\phi|_{Y \backslash
    F}}\right)$  is an ideal in $C^*_r\left(\Gamma_{\phi}\right)$
because $Y \backslash F$ is totally $\phi$-invariant.

\begin{prop} \label{prop:canonic} (Cf. Proposition II.4.5 of \cite{Re}.) 
  Let $F$ be a non-empty, closed and totally $\phi$-invariant subset
  of $Y$. There is then a
  surjective $*$-homomorphism $\pi_F:  C_r^*(\Gamma_\phi)\to
  C_r^*(\Gamma_{\phi|_F})$ which extends the restriction map
   $C_c\left(\Gamma_{\phi}\right) \to C_c\left(\Gamma_{\phi|_F}\right)$
  and has the property that $\ker \pi_F =
  C^*_r\left(\Gamma_{\phi|_{Y \backslash F}}\right)$, i.e.
\begin{equation*}
\begin{xymatrix}{
0 \ar[r] & C^*_r\left(\Gamma_{\phi|_{Y \backslash F}}\right) \ar[r] &
C_r^*(\Gamma_\phi) \ar[r]^-{\pi_F} \ar[r] &  C_r^*(\Gamma_{\phi|_F})
\ar[r] & 0}
\end{xymatrix}
\end{equation*}
is exact. Furthermore,
\begin{equation}\label{rhoF}
\rho(\ker\pi_F)=F.
\end{equation}
\end{prop}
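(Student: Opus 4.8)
The plan is to build $\pi_F$ directly from the restriction map on continuous compactly supported functions, show it is contractive for the reduced norms, and then pin down its kernel by invoking the classification of gauge invariant ideals proved above.

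\medskip
\emph{Constructing $\pi_F$.} Write $r : C_c(\Gamma_\phi)\to C_c(\Gamma_{\phi|_F})$ for the restriction map. The point that makes everything work is total $\phi$-invariance: for an arrow $(x',k,y')\in\Gamma_\phi$ one has $x'\in F$ if and only if $y'\in F$, since $\phi^{n}(x')=\phi^{m}(y')$ for suitable $n,m$ with $k=n-m$ and $\phi^{-1}(F)=F$ gives $x\in F\iff\phi(x)\in F$. Hence $\Gamma_{\phi|_F}$ is a closed subgroupoid of $\Gamma_\phi$, disjoint from the open subgroupoid $\Gamma_{\phi|_{Y\setminus F}}$, and the convolution and involution sums defining the product only involve arrows which remain in $F$, so $r$ is a $*$-homomorphism. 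It is surjective onto $C_c(\Gamma_{\phi|_F})$: any $g\in C_c(\Gamma_{\phi|_F})$ is supported in finitely many of the closed subsets $\Gamma_\phi(k,n)\cap(F\times\{k\}\times F)$ of the locally compact Hausdorff spaces $\Gamma_\phi(k,n)$, and one extends $g$ by Tietze's theorem followed by multiplication by a compactly supported cut-off on $\Gamma_\phi(k,n)$. Finally $r$ is norm decreasing: for $x\in F$ total invariance forces $\{(x',k,x):(x',k,x)\in\Gamma_\phi\}$ to lie inside $\Gamma_{\phi|_F}$, so the Hilbert space $H_x$ is the same for $\Gamma_\phi$ and for $\Gamma_{\phi|_F}$ and under this identification $\pi_x(f)=\pi_x(r(f))$; therefore $\|r(f)\|=\sup_{x\in F}\|\pi_x(f)\|\le\sup_{x\in Y}\|\pi_x(f)\|=\|f\|$. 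Thus $r$ extends to a $*$-homomorphism $\pi_F: C^*_r(\Gamma_\phi)\to C^*_r(\Gamma_{\phi|_F})$, which is surjective because it has dense range and the range of a $*$-homomorphism of $C^*$-algebras is closed. By construction $\pi_F$ extends the restriction map and $\pi_F(f)=f|_F$ for $f\in D=C(Y)$.

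\medskip
\emph{Identifying the kernel.} Put $J=C^*_r(\Gamma_{\phi|_{Y\setminus F}})$, an ideal of $C^*_r(\Gamma_\phi)$. Since $\Gamma_{\phi|_{Y\setminus F}}$ and $\Gamma_{\phi|_F}$ are disjoint, $r$ annihilates $C_c(\Gamma_{\phi|_{Y\setminus F}})$, whence $J\subseteq\ker\pi_F$. Both $J$ and $\ker\pi_F$ are gauge invariant ideals: the gauge action preserves the support of a function, so it leaves $C_c(\Gamma_{\phi|_{Y\setminus F}})$ and hence $J$ invariant, while $\pi_F$ intertwines the gauge actions (immediate on $C_c$, as $r$ respects the $\mathbb Z$-grading), so $\ker\pi_F$ is gauge invariant. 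Because $\phi$ is a local homeomorphism, $D_{R_\phi}=D_{\Gamma_\phi}=C(Y)=D$, and I would now check that
$$
J\cap D \;=\; C_0(Y\setminus F)\;=\;\ker\pi_F\cap D .
$$
Indeed $C_0(Y\setminus F)\subseteq J$ is clear, and if $f\in D$ lies in $J\subseteq\ker\pi_F$ then $f|_F=\pi_F(f)=0$, so $f\in C_0(Y\setminus F)$; the same argument with $\ker\pi_F$ in place of $J$ gives the second equality, using $\pi_F(f)=f|_F$. Now \reflemma{intersectunique}, applied to the gauge invariant ideals $\ker\pi_F$ and $J$ whose intersections with $D=D_{R_\phi}$ coincide, yields $\ker\pi_F=J$; this is exactly the asserted exactness of the sequence. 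Finally, by \eqref{rho}, $\rho(\ker\pi_F)$ is the common zero set in $Y$ of $\ker\pi_F\cap D=C_0(Y\setminus F)$, which is precisely $F$ (for $y\notin F$ choose $f\in C_c(Y\setminus F)$ with $f(y)\neq0$), giving \eqref{rhoF}.

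\medskip
\emph{Where the difficulty lies.} The inclusion $J\subseteq\ker\pi_F$ is trivial, but the reverse inclusion is the delicate step: the analogous statement fails for reduced groupoid $C^*$-algebras in general, and here it is forced only by the already-established bijection between gauge invariant ideals of $C^*_r(\Gamma_\phi)$ and $\Gamma_\phi$-invariant ideals of $D$ (\refprop{gaugeideals}, used through \reflemma{intersectunique}), so the real content of that inclusion is packaged into those earlier results. The only other point that genuinely uses a hypothesis is the norm estimate for $r$, which is where total $\phi$-invariance enters: it is precisely total invariance that makes the source fibres over points of $F$, and hence the representation spaces $H_x$, identical for $\Gamma_\phi$ and $\Gamma_{\phi|_F}$.
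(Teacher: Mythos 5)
Your proof is correct and follows essentially the same route as the paper's: build $\pi_F$ from the restriction map on $C_c(\Gamma_\phi)$ (a $*$-homomorphism by total $\phi$-invariance, contractive because $\pi_x\circ r=\pi_x$ for $x\in F$, surjective via Tietze), then identify $\ker\pi_F$ with $C^*_r\left(\Gamma_{\phi|_{Y\backslash F}}\right)$ by noting both are gauge-invariant ideals whose intersections with $D=C(Y)$ equal $C_0(Y\backslash F)$ and applying Lemma~\ref{intersectunique}, which also yields (\ref{rhoF}). You simply spell out details (the norm estimate and the surjectivity cut-off) that the paper leaves implicit.
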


\begin{proof} Let $\dot{\pi_F} : C_c\left(\Gamma_{\phi}\right) \to
  C_c\left(\Gamma_{\phi|_F}\right)$ denote the restriction map which
  is surjective by Tietze's theorem. By using that $F$ is totally
  $\phi$-invariant, it follows straightforwardly that $\dot{\pi_F}$ is
  a $*$-homomorphism. Since $\pi_x \circ \dot{\pi_F} = \pi_x$ when $x
  \in F$, it follows that $\dot{\pi_F}$ extends by continuity to a
  $*$-homomorphism $\pi_F :  C_r^*(\Gamma_\phi)\to
  C_r^*(\Gamma_{\phi|_F})$ which is surjective because $\dot{\pi_F}$
  is. To complete the proof observe that
\begin{equation*}\label{estblis}
\ker \pi_F \cap D = C_0\left(Y \backslash F\right) = C^*_r\left(\Gamma_{\phi|_{Y \backslash
        F}}\right) \cap D .
\end{equation*}
The first identity shows that (\ref{rhoF}) holds, and since $\ker \pi_F$
and $C^*_r\left(\Gamma_{\phi|_{Y \backslash
        F}}\right)$ are both gauge-invariant ideals the second that
  they are identical by Lemma \ref{intersectunique}.
\end{proof}

By combining Proposition \ref{gaugeideals}, Lemma \ref{psiinv} and Proposition
\ref{prop:canonic} we obtain the following.

\begin{thm}\label{psi-invariant} The map $\rho$ is a bijection
  from the gauge-invariant ideals in
  $C^*_r\left(\Gamma_{\phi}\right)$ onto the set of closed totally $\phi$-invariant
  subsets of $Y$. The inverse is the map which sends a closed totally
  $\phi$-invariant subset $F \subseteq Y$ to the ideal
$$
\ker \pi_F = \left\{ a \in C^*_r\left(\Gamma_{\phi}\right) : \
  P_{\Gamma_{\phi}}(a^*a)(x) = 0 \ \forall x \in F \right\} .
$$
\end{thm}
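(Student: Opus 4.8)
The plan is to assemble the three preceding results into the asserted bijection. First I would recall that by Proposition~\ref{gaugeideals} the map $J \mapsto \widehat{J}$ is a bijection from the $\Gamma_{\phi}$-invariant ideals of $D = C(Y)$ onto the gauge invariant ideals of $C^*_r\left(\Gamma_{\phi}\right)$, with inverse $I \mapsto I \cap D$. By the standard Gelfand correspondence, ideals of $C(Y)$ are in bijection with open subsets of $Y$ (equivalently, with closed subsets of $Y$ via complementation), so it remains only to identify, under this correspondence, which open subsets $U \subseteq Y$ give $\Gamma_{\phi}$-invariant ideals $C_0(U)$: I would check, exactly as in the proof of Corollary~\ref{A5} and Corollary 2.18 of \cite{Th1}, that $C_0(U)$ is $\Gamma_{\phi}$-invariant if and only if $U$ is totally $\phi$-invariant, i.e. $\phi^{-1}(U) = U$. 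Concretely, conjugation by an element $\eta \in C_c(\Gamma_\phi)$ supported in an open bisection $W$ moves the support of a function in $D$ along $W$, and since every such bisection is contained in some $\Gamma_\phi(k,n)$, invariance under all such conjugations translates precisely into $\phi^{-k}(\phi^k(U)) = U$ for all $k$, which by surjectivity of $\phi$ is equivalent to $\phi^{-1}(U) = U$; this is the same bisection-pushing argument already used in Lemma~\ref{psiinv}.

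Having established that, the composite bijection sends a closed totally $\phi$-invariant subset $F$ to the gauge invariant ideal $\widehat{C_0(Y \setminus F)} = \{a \in C^*_r\left(\Gamma_{\phi}\right) : P_{\Gamma_{\phi}}(a^*a) \in C_0(Y \setminus F)\}$, and the condition $P_{\Gamma_{\phi}}(a^*a) \in C_0(Y \setminus F)$ just says $P_{\Gamma_{\phi}}(a^*a)(x) = 0$ for all $x \in F$; so this ideal is exactly the one displayed in the statement. To see that it coincides with $\ker \pi_F$, I would invoke Proposition~\ref{prop:canonic}: there $\ker \pi_F$ is shown to be a gauge invariant ideal with $\ker \pi_F \cap D = C_0(Y \setminus F)$, hence $\rho(\ker \pi_F) = F$; since $\rho(I) = F$ is equivalent to $I \cap D = C_0(Y \setminus F)$ for gauge invariant $I$, uniqueness in Proposition~\ref{gaugeideals} (or directly Lemma~\ref{intersectunique}) forces $\ker \pi_F = \widehat{C_0(Y \setminus F)}$, which is the description given.

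Finally I would note that the map in the theorem is literally $\rho$ restricted to gauge invariant ideals: for a gauge invariant ideal $I$ we have $\rho(I) = \{y : f(y) = 0 \ \forall f \in I \cap D\}$, and $I \cap D$ being an ideal of $C(Y)$ corresponds to the open set $Y \setminus \rho(I)$, while $\Gamma_{\phi}$-invariance of $I \cap D$ (automatic since $I$ is an ideal) forces $Y \setminus \rho(I)$ to be totally $\phi$-invariant, so $\rho(I)$ is closed and totally $\phi$-invariant — this is the content of Lemma~\ref{psiinv}. Injectivity of $\rho$ on gauge invariant ideals is exactly Lemma~\ref{intersectunique} together with the observation that $\rho(I) = \rho(I')$ implies $I \cap D = I' \cap D$; surjectivity is witnessed by $F = \rho(\ker \pi_F)$ from \eqref{rhoF}. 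I do not expect a genuine obstacle here: the work has all been done in the lemmas and propositions cited, and the only mildly delicate point is the bisection-pushing verification that the $\Gamma_\phi$-invariant ideals of $C(Y)$ are precisely the $C_0$ of totally $\phi$-invariant open sets, which is routine and parallels Lemma~\ref{psiinv} and Corollary~\ref{A5}.
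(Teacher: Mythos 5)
Your overall assembly is the paper's own proof: the theorem is obtained there simply by combining Proposition~\ref{gaugeideals}, Lemma~\ref{psiinv} and Proposition~\ref{prop:canonic}, and this is exactly what your second and third paragraphs do (the range of $\rho$ via Lemma~\ref{psiinv}, injectivity via Lemma~\ref{intersectunique} together with $I\cap D=C_0(Y\setminus\rho(I))$, surjectivity and the displayed formula for the inverse via Proposition~\ref{prop:canonic}, \eqref{rhoF} and the identification $\ker\pi_F=\widehat{C_0(Y\setminus F)}$). That part is correct.

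One sentence in your first paragraph is false as stated, though: invariance of $C_0(U)$ under conjugation by elements supported in bisections does \emph{not} translate into $\phi^{-k}\left(\phi^k(U)\right)=U$ for all $k$, and that condition is \emph{not} equivalent to $\phi^{-1}(U)=U$, surjectivity notwithstanding. The equal-exponent condition is precisely $\phi$-saturation, i.e.\ $R_{\phi}$-invariance as in Corollary~\ref{A5}; it only reflects the bisections inside $\Gamma_{\phi}(0)$ and is strictly weaker than total invariance (if $\phi$ happens to be a homeomorphism, \emph{every} subset satisfies $\phi^{-k}\left(\phi^k(U)\right)=U$, while only sets with $\phi^{-1}(U)=U$ are totally invariant; likewise a saturation $H(x)$ is saturated but in general not totally $\phi$-invariant). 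The bisections with $k\neq 0$, e.g.\ neighbourhoods of $(x,1,\phi(x))$ as used in Lemma~\ref{psiinv}, contribute the two-parameter family of conditions $\phi^{-n}\left(\phi^m(U)\right)\subseteq U$ for all $n,m\in\mathbb N$ taken independently, and it is this family (in particular $n=1,m=0$ together with $n=0,m=1$, the latter combined with surjectivity) that is equivalent to $\phi^{-1}(U)=U$. The characterization you wanted from that paragraph --- that the $\Gamma_{\phi}$-invariant ideals of $C(Y)$ are exactly the $C_0(U)$ with $U$ open and totally $\phi$-invariant --- is true, and your final identification of the inverse does not actually depend on the garbled justification (it goes through $\ker\pi_F$, \eqref{rhoF} and Lemma~\ref{intersectunique}), so the proof stands once that one equivalence is repaired.
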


We remark that since the isomorphism (\ref{basiciso}) is equivariant
with respect to the gauge actions, Theorem \ref{psi-invariant} gives
also a description of the gauge invariant ideals in
$C^*_r\left(\Gamma_{\varphi}\right)$, as a complement to the one of
Proposition \ref{gaugeideals}.

\subsection{The primitive ideals}

We are now in position to obtain a complete description of the
primitive ideals of $C^*_r\left(\Gamma_{\phi}\right)$. Much of what we
do
is merely a translation of Katsuras description of the primitive
ideals in the more general $C^*$-algebras considered by him in
\cite{K}. Recall that because we only deal with separable
$C^*$-algebras the primitive ideals are the same as the prime ideals,
cf. 3.13.10 and 4.3.6 in \cite{Pe}.

\begin{lemma} \label{prop:ideal-gen}
  Let $I$ be an ideal in $C_r^*(\Gamma_\phi)$ and let
  $A$ be a closed totally $\phi$-invariant subset of $Y$.
  If $\rho(I)\subseteq A$, then $\ker\pi_A\subseteq I$.
\end{lemma}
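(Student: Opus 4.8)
The plan is to reduce the statement to the bijection of Theorem~\ref{psi-invariant} via the gauge-invariant part of $I$. Given an ideal $I$ in $C_r^*(\Gamma_\phi)$ with $\rho(I)\subseteq A$, I would first pass to a gauge-invariant ideal naturally associated to $I$ whose intersection with $D = C(Y)$ is still $I\cap D$, and which is still contained in $I$. The natural candidate is the largest gauge-invariant ideal contained in $I$, which can be described concretely as
\begin{equation*}
I_0 = \left\{ a \in C^*_r\left(\Gamma_{\phi}\right) : \ \beta_{\lambda}(a) \in I \ \forall \lambda \in \mathbb T \right\},
\end{equation*}
or, equivalently, as the ideal generated by $\int_{\mathbb T}\beta_\lambda(I)\,d\lambda$; in either case $I_0 \subseteq I$, $I_0$ is gauge invariant, and $I_0 \cap D = I \cap D$ since every $f \in I \cap D$ is gauge-fixed and conversely $I_0 \cap D \subseteq I \cap D$. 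Hence $\rho(I_0) = \rho(I)$.

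Next I would use Theorem~\ref{psi-invariant}: since $I_0$ is gauge invariant, it equals $\ker\pi_{F}$ for $F = \rho(I_0) = \rho(I)$. The hypothesis $\rho(I)\subseteq A$ together with the containment of closed totally $\phi$-invariant sets gives $A \subseteq F$ being false in general, so instead I use the explicit form of $\ker\pi_A$ from Theorem~\ref{psi-invariant}: since $A$ is closed, totally $\phi$-invariant, and $F = \rho(I)\subseteq A$, every $a$ with $P_{\Gamma_\phi}(a^*a)(x)=0$ for all $x\in A$ also satisfies $P_{\Gamma_\phi}(a^*a)(x)=0$ for all $x\in F$, so $\ker\pi_A \subseteq \ker\pi_F = I_0$. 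Combining with $I_0 \subseteq I$ yields $\ker\pi_A \subseteq I$.

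The only point requiring care — and the step I expect to be the main obstacle — is verifying that the gauge-invariant ideal $I_0$ extracted from $I$ really does have $I_0 \cap D = I \cap D$. The inclusion $I_0\cap D\subseteq I\cap D$ is immediate from $I_0\subseteq I$; for the reverse one must check that any $f\in I\cap D$ lies in $I_0$, which holds because $\beta_\lambda(f)=f\in I$ for every $\lambda$ as $D$ consists of functions supported on the unit space $\Gamma_\phi(0)$. (Alternatively, one can bypass $I_0$ altogether: apply Theorem~\ref{psi-invariant} to the gauge-invariant ideal $\widehat{I\cap D}$ of Proposition~\ref{gaugeideals}, note $\widehat{I\cap D}=\ker\pi_{\rho(I)}$ and $\widehat{I\cap D}\subseteq I$ because $P_{\Gamma_\phi}(a^*a)\in I\cap D\subseteq I$ forces $a\in I$ by faithfulness of $P_{\Gamma_\phi}$ on positive elements modulo $I$, and then argue as above.) Either route reduces the lemma to the already-established description of $\ker\pi_A$ and the monotonicity $\rho(I)\subseteq A \Rightarrow \ker\pi_A\subseteq\ker\pi_{\rho(I)}$.
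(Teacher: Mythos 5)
Your main argument is correct, and it takes a genuinely different route from the paper's. You pass to $I_0=\{a\in C^*_r(\Gamma_\phi):\ \beta_\lambda(a)\in I\ \forall\lambda\in\mathbb T\}$, the largest gauge-invariant ideal contained in $I$, observe that $I_0\cap D=I\cap D$ because the gauge action fixes $D=C(Y)$ pointwise, hence $\rho(I_0)=\rho(I)$, then use the bijection of Theorem \ref{psi-invariant} to get $I_0=\ker\pi_{\rho(I)}$ and finish with the monotonicity $\rho(I)\subseteq A\Rightarrow\ker\pi_A\subseteq\ker\pi_{\rho(I)}$, which is immediate from the displayed description of $\ker\pi_F$ in that theorem. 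The paper argues more directly: $\rho(I)\subseteq A$ gives $C_0(Y\setminus A)\subseteq I\cap C(Y)$, and by Proposition \ref{prop:canonic} an approximate unit $\{i_n\}$ for $C_0(Y\setminus A)$ is also an approximate unit for $\ker\pi_A=C^*_r\bigl(\Gamma_{\phi|_{Y\setminus A}}\bigr)$; since $i_n\in I$ and $I$ is closed, $\ker\pi_A\subseteq I$. The paper's proof needs only Proposition \ref{prop:canonic}, while yours invokes the full classification of gauge-invariant ideals --- legitimate here, since Theorem \ref{psi-invariant} precedes this lemma --- and has the mild advantage of exhibiting the largest gauge-invariant ideal inside $I$ explicitly as $\ker\pi_{\rho(I)}$.

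Two side remarks in your write-up are wrong, though neither affects the main line. First, $I_0$ is not ``equivalently the ideal generated by $\int_{\mathbb T}\beta_\lambda(I)\,d\lambda$'': averaging a positive element of a non-gauge-invariant ideal can land outside the ideal (take $\phi=\operatorname{id}$ on a one-point space, so $C^*_r(\Gamma_\phi)\simeq C(\mathbb T)$ with the gauge action by rotation, and $I=\{f: f(1)=0\}$); that construction yields in general the smallest gauge-invariant ideal containing $I$, not the largest one contained in it, so for that description the claim $I_0\subseteq I$ fails. Second, the parenthetical alternative is circular as justified: the implication $P_{\Gamma_\phi}(a^*a)\in I\cap D\Rightarrow a\in I$ is exactly the inclusion $\widehat{I\cap D}\subseteq I$, i.e.\ the lemma with $A=\rho(I)$, and ``faithfulness of $P_{\Gamma_\phi}$ modulo $I$'' is not available --- for a non-gauge-invariant $I$ the expectation need not even map $I$ into $I$, so no expectation is induced on the quotient. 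Keep the $I_0$ argument (or the paper's approximate-unit argument) and drop these two remarks.
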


\begin{proof}
  Since $\rho(I)\subseteq A$ it follows from
  the Stone-Weierstrass theorem that $C_0(Y\setminus A)\subseteq I
  \cap C(Y)$. Let
  $\left\{i_n\right\}$ be an approximate unit in $C_0(Y \backslash
  A)$. It follows from Proposition \ref{prop:canonic} that $\{i_n\}$ is
  also an approximate unit in $\ker \pi_A$. Since $\{i_n\} \subseteq
  I$ it follows that $\ker\pi_A\subseteq I$.  
\end{proof}

We say that a closed totally $\phi$-invariant subset $A$ of $Y$ is \emph{prime}
  when it has the property that if $B$ and $C$ also are closed and
  totally $\phi$-invariant subsets of $Y$ and $A\subseteq B\cup C$, then either
  $A\subseteq B$ or $A\subseteq C$.

Let 
$\cip:=\{A\subseteq Y :  A\text{ is non-empty, closed, totally $\phi$-invariant and
  prime}\}$. 
For $x\in Y$ let 
$$
\orb(x) =\{y\in Y :  \exists
m,n\in\N:\phi^n(x)=\phi^m(y)\}.
$$ 
We call $\orb(x)$ the \emph{total
  $\phi$-orbit of $x$}.

\begin{prop}(Cf. Proposition 4.13 and 4.4 of \cite{K}.)\label{glemt}
  \begin{equation*}
    \cip=\{\overline{\orb(x)} :  x\in Y\}.
  \end{equation*}
\end{prop}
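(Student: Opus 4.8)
The plan is to prove the two inclusions separately. For the inclusion $\{\overline{\orb(x)} : x \in Y\} \subseteq \cip$, I would first observe that $\orb(x)$ is totally $\phi$-invariant: indeed $y \in \orb(x)$ iff $\phi^n(x) = \phi^m(y)$ for some $n,m$, and one checks directly that $\phi^{-1}(\orb(x)) = \orb(x)$, using surjectivity of $\phi$ for the inclusion $\supseteq$ and composition with $\phi$ for $\subseteq$. Passing to closures, $\overline{\orb(x)}$ is a non-empty closed totally $\phi$-invariant set, so it remains to see it is prime. Here I would use Lemma \ref{prop:ideal-gen} together with Theorem \ref{psi-invariant}: if $\overline{\orb(x)} \subseteq B \cup C$ with $B,C$ closed totally $\phi$-invariant, then $x$ lies in $B$ or in $C$, say $x \in B$; since $B$ is closed and totally $\phi$-invariant it contains $\orb(x)$ and hence $\overline{\orb(x)}$. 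So $\overline{\orb(x)} \subseteq B$, establishing primeness. (This part is essentially a direct translation of the topological side of Katsura's Proposition 4.4.)

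For the reverse inclusion $\cip \subseteq \{\overline{\orb(x)} : x \in Y\}$, let $A$ be non-empty, closed, totally $\phi$-invariant and prime. I would run a Baire-category/inductive construction in the spirit of the proof of Proposition \ref{A17}. Fix a countable base $\{U_k\}_{k=0}^\infty$ of non-empty open subsets of $A$ (recall $Y$ is compact metric, hence second countable, and so is the closed subspace $A$). The goal is to build a decreasing sequence of compact non-empty neighbourhoods $B_k$ in $A$ and integers such that every $U_k$ meets $\orb(x)$ for each $x \in \bigcap_k B_k$; then $\overline{\orb(x)} \supseteq A$, and since $\orb(x) \subseteq A$ by total $\phi$-invariance and closedness of $A$ we get $\overline{\orb(x)} = A$. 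The inductive step: given $B_m$, pick a non-empty open $V_{m+1} \subseteq B_m$. The sets $A \setminus \bigcup_l \phi^{-l}(\phi^l(V_{m+1}))$ and $A \setminus \bigcup_l \phi^{-l}(\phi^l(U_{m+1}))$ are closed totally $\phi$-invariant proper subsets of $A$ (properness because $V_{m+1}, U_{m+1}$ are non-empty; totally $\phi$-invariant because the total saturation $\bigcup_l \phi^{-l}(\phi^l(\cdot))$ is totally $\phi$-invariant — this uses that $\phi$ is a surjective local homeomorphism so $\phi(\phi^{-1}(S)) = S$). By primeness of $A$, $A$ is not contained in their union, so there is a point $z$ in the intersection of the two total saturations; this yields $n,n' \in \N$ and points hit by $\phi^n$ from $V_{m+1}$ and by $\phi^{n'}$ from $U_{m+1}$ at a common value — equivalently a point of $V_{m+1}$ whose total $\phi$-orbit meets $U_{m+1}$. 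Choose $B_{m+1} \subseteq V_{m+1}$ a compact neighbourhood of such a point small enough that the relevant orbit relation persists. Then for $x \in \bigcap_k B_k$ every $U_k$ meets $\orb(x)$, as desired.

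The main obstacle, and the place to be careful, is the inductive step of the second inclusion: one must phrase the "orbit meets $U_{m+1}$" condition in a form that is stable under shrinking to a small neighbourhood $B_{m+1}$, and one must correctly handle that $\orb$ allows both forward iterations $\phi^n$ and $\phi^m$ on each side (unlike the one-sided saturation $H(x)$ in Proposition \ref{A17}). Concretely, "$\orb(z)$ meets $U_{m+1}$" means there exist $p,q \in \N$ with $\phi^p(z) \in \phi^q(U_{m+1})$; one then takes $B_{m+1}$ a compact neighbourhood of $z$ inside $V_{m+1}$ so small that $\phi^p(B_{m+1}) \subseteq \phi^q(U_{m+1})$ (possible by continuity of $\phi^p$ and since $\phi^q(U_{m+1})$ is open, as $\phi$ is a local homeomorphism). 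The rest is bookkeeping: verifying that the total saturations are genuinely totally $\phi$-invariant, that their complements in $A$ are proper, and assembling the diagonal point. I expect no difficulty beyond this; both the definitions and the tools (Corollary \ref{A5}-style arguments, now via Theorem \ref{psi-invariant} and Lemma \ref{prop:ideal-gen}) are already in place.
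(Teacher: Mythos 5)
Your overall strategy is the paper's: the inclusion $\{\overline{\orb(x)} : x\in Y\}\subseteq\cip$ is the easy direction (and your topological argument for it is fine; the appeal to Lemma \ref{prop:ideal-gen} and Theorem \ref{psi-invariant} there is not actually needed), while the hard inclusion is proved by an inductive construction of compact neighbourhoods, driven at each step by applying primeness of $A$ to the complements of the saturations of two non-empty open sets, and finished by a point whose total orbit meets every basic open set. Your single nested sequence $B_k$ with the conditions $\phi^{p_k}(B_k)\subseteq\phi^{q_k}(U_k)$ is even a mild simplification of the paper's two sequences $C_k,C'_k$, and that bookkeeping does work.

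There is, however, a concrete error at the key step as you wrote it: the sets you saturate are $\bigcup_l\phi^{-l}(\phi^l(V_{m+1}))$ and $\bigcup_l\phi^{-l}(\phi^l(U_{m+1}))$, with the \emph{same} exponent inside and outside. That is the $\phi$-saturation of Proposition \ref{A17}, and it is in general not totally $\phi$-invariant: if $\phi$ is a homeomorphism (e.g.\ a minimal rotation, which is a surjective local homeomorphism and has $Y\in\cip$), then $\bigcup_l\phi^{-l}(\phi^l(V))=V$, so $A\setminus V$ is not totally $\phi$-invariant and primeness of $A$ --- which is defined only relative to closed totally $\phi$-invariant sets --- cannot be applied to the union of the two complements; indeed for disjoint non-empty open $V,U$ that union is all of $A$ and your step would "deduce" $V\cap U\neq\emptyset$. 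The justification you offer, that $\phi(\phi^{-1}(S))=S$ by surjectivity, does not establish the invariance claim. What is needed, and what the paper uses, is the two-exponent saturation $\bigcup_{l,m=0}^{\infty}\phi^{-l}(\phi^m(\cdot))$, which is open (since $\phi$ is an open map) and totally $\phi$-invariant; with that replacement a point in the intersection of the two saturations produces $v\in V_{m+1}$, $u\in U_{m+1}$ and $p,q\in\N$ with $\phi^p(v)=\phi^q(u)$, and the rest of your argument (choose a compact neighbourhood $B_{m+1}\subseteq V_{m+1}$ of $v$ with $\phi^p(B_{m+1})\subseteq\phi^q(U_{m+1})$, then intersect) goes through exactly as you describe. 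You flag the one-sided versus two-sided issue yourself and use independent exponents $p,q$ later, so the fix is the one you already hint at --- but the invariance statement as written is false and the primeness step fails until it is corrected.
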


\begin{proof}
  It is clear that $\overline{\orb(x)}\in\cip$ for every $x\in Y$. Assume that $A\in\cip$ and let $\{U_k\}_{k=1}^\infty$ be a basis for
  $A$. We will by induction show that we can choose
  compact neighbourhoods $\{C_k\}_{k=0}^\infty$ and
  $\{C_k'\}_{k=0}^\infty$ in $A$ and positive integers $(n_k)_{k=0}^\infty$
  and $(n_k')_{k=0}^\infty$ such that $C_k\subseteq U_k$ and
  $C_k'\subseteq \phi^{n_{k-1}}(C_{k-1})\cap
  \phi^{n'_{k-1}}(C'_{k-1})$ for $k\ge 1$. For this set $C_0=C'_0=A$. Assume then that $n\ge 1$ and that
  $C_1,\dots,C_n$, $C'_1,\dots,C'_n$, $n_0,\dots,n_{n-1}$ and $n'_0,\dots,n'_{n-1}$
  satisfying the conditions above have been chosen. Choose non-empty
  open subsets $V_n\subseteq C_n$ and $V'_n\subseteq C'_n$.
  We then have that 
  \begin{equation*}
    \bigcup_{l,m=0}^\infty\phi^{-l}(\phi^m(V_n))\text{ and }\bigcup_{l,m=0}^\infty\phi^{-l}(\phi^m(V'_n))
  \end{equation*}
  are non-empty open and totally $\phi$-invariant
  subsets of $A$, and thus that 
  \begin{equation} \label{eq:1}
    A\setminus\bigcup_{l,m=0}^\infty\phi^{-l}(\phi^m(V_n))\text{ and
    }A\setminus\bigcup_{l,m=0}^\infty\phi^{-l}(\phi^m(V'_n)) 
  \end{equation}
  are closed, totally $\phi$-invariant subsets of $Y$. Since $A$ is prime and is not
  contained in either of the sets from \eqref{eq:1}, it follows that
  $A$ is not contained in 
  \begin{equation*} 
    \left(A\setminus\bigcup_{l,m=0}^\infty\phi^{-l}(\phi^m(V_n))\right)\bigcup
    \left(A\setminus\bigcup_{l,m=0}^\infty\phi^{-l}(\phi^m(V'_n)) \right),
  \end{equation*}
 whence
  \begin{equation*} 
    \left(\bigcup_{l,m=0}^\infty\phi^{-l}(\phi^m(V_n))\right)\bigcap
    \left(\bigcup_{l,m=0}^\infty\phi^{-l}(\phi^m(V'_n))\right) \ne\emptyset.
  \end{equation*}
  It follows that there are positive integers $n_n$ and $n'_n$
  such that $\phi^{n_n}(V_n)\cap\phi^{n'_n}(V'_n)$ is
  non-empty. Thus we can choose a compact neighbourhood $C_{n+1}\subseteq
  U_{n+1}$ and a compact neighbourhood $C'_{n+1}\subseteq
  \phi^{n_n}(V_n)\cap\phi^{n'_n}(V'_n)\subseteq
  \phi^{n_n}(C_n)\cap\phi^{n'_n}(C'_n)$ which is what is required for the induction step.

  It is easy to check that
  \begin{equation*}
    C'_0\cap\phi^{-n'_0}(C'_1)\cap\dots \dots \cap\phi^{-n'_0- \dots -n'_k}(C'_{k+1}),\ k=0,1,\dots
  \end{equation*}
  is a decreasing sequence of non-empty compact sets. It follows
  that there is an 
  \begin{equation*}
    x\in \bigcap_{k=0}^\infty \phi^{-n'_0-\dots \dots -n'_k}(C'_{k+1})\cap C'_0.
  \end{equation*}
  We have for every $k\in\N$ that $\phi^{n'_0+\dots+n'_k}(x)\in
  C'_{k+1}\subseteq \phi^{n_k}(C_k)\subseteq \phi^{n_k}(U_k)$, and it
  follows that $\orb(x)$ is dense in $A$, and thus that $A=\overline{\orb(x)}$.
\end{proof}

\begin{prop}(Cf. Proposition 9.3 of \cite{K}.) \label{prop:prime}
Assume that $I$ is a prime ideal in $C_r^*(\Gamma_\phi)$. It follows that $\rho(I)\in\cip$.
\end{prop}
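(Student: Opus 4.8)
The statement to prove is that if $I$ is a prime ideal in $C_r^*(\Gamma_\phi)$, then the closed totally $\phi$-invariant set $\rho(I)$ is prime in the sense defined above, i.e.\ it belongs to $\cip$. The plan is to verify the defining property directly: suppose $B$ and $C$ are closed totally $\phi$-invariant subsets of $Y$ with $\rho(I)\subseteq B\cup C$; we must show $\rho(I)\subseteq B$ or $\rho(I)\subseteq C$. The obvious first move is to pass to the associated ideals. By Proposition~\ref{prop:canonic} we have the ideals $\ker\pi_B$ and $\ker\pi_C$ of $C_r^*(\Gamma_\phi)$, with $\rho(\ker\pi_B)=B$ and $\rho(\ker\pi_C)=C$. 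The key point I would establish first is that the inclusion $\rho(I)\subseteq B\cup C$ of closed sets translates, on the level of ideals, into $\ker\pi_B\cdot\ker\pi_C\subseteq I$ (or at least $\ker\pi_B\cap\ker\pi_C\subseteq I$, which suffices since the product of two ideals generates the same ideal as their intersection).

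To get that translation, recall from Lemma~\ref{prop:ideal-gen} that for a closed totally $\phi$-invariant set $A$ with $\rho(I)\subseteq A$ one has $\ker\pi_A\subseteq I$. So the plan is: take $A=B\cup C$. First one must check that $B\cup C$ is again closed and totally $\phi$-invariant — closedness is clear, and total invariance follows since $\phi^{-1}(B\cup C)=\phi^{-1}(B)\cup\phi^{-1}(C)=B\cup C$. By hypothesis $\rho(I)\subseteq B\cup C$, so Lemma~\ref{prop:ideal-gen} gives $\ker\pi_{B\cup C}\subseteq I$. Next I would verify that $\ker\pi_{B\cup C}=\ker\pi_B\cap\ker\pi_C$: using the description $\ker\pi_A=\{a:\ P_{\Gamma_\phi}(a^*a)(x)=0\ \forall x\in A\}$ from Theorem~\ref{psi-invariant}, membership in $\ker\pi_{B\cup C}$ means $P_{\Gamma_\phi}(a^*a)$ vanishes on $B$ and on $C$, i.e.\ $a\in\ker\pi_B$ and $a\in\ker\pi_C$. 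Hence $\ker\pi_B\cap\ker\pi_C\subseteq I$, and since $\ker\pi_B\cdot\ker\pi_C\subseteq\ker\pi_B\cap\ker\pi_C$, primeness of $I$ forces $\ker\pi_B\subseteq I$ or $\ker\pi_C\subseteq I$.

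Finally, one concludes by applying $\rho$ and using its monotonicity: if $\ker\pi_B\subseteq I$ then $\rho(\ker\pi_B)\supseteq\rho(I)$, i.e.\ $B\supseteq\rho(I)$ since $\rho(\ker\pi_B)=B$ by \eqref{rhoF}; similarly in the other case $C\supseteq\rho(I)$. (Monotonicity of $\rho$ is immediate from the definition \eqref{rho}: a larger ideal meets $D$ in a larger ideal of $C(Y)=D$, whose zero set is smaller.) Also $\rho(I)$ is nonempty because $I$, being prime, is proper, so $I\cap D\ne D$ and its zero set is nonempty; and $\rho(I)$ is closed and totally $\phi$-invariant by Lemma~\ref{psiinv}. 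Together these show $\rho(I)\in\cip$.

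The only step requiring genuine care — and the place I expect the argument to need the most attention — is the identity $\ker\pi_{B\cup C}=\ker\pi_B\cap\ker\pi_C$ together with the legitimacy of invoking Lemma~\ref{prop:ideal-gen} with $A=B\cup C$; everything else is formal manipulation of the Galois-type correspondence already set up in Theorem~\ref{psi-invariant} and the monotonicity of $\rho$. One should double-check that $\ker\pi_{B\cup C}$ is defined (i.e.\ $B\cup C$ nonempty, which holds as $\rho(I)\ne\emptyset$ and $\rho(I)\subseteq B\cup C$) and that no subtlety arises from one of $B$, $C$ being empty — but if, say, $C=\emptyset$ then $\rho(I)\subseteq B$ trivially, so that degenerate case is harmless.
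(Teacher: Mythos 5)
Your proposal is correct and follows essentially the same route as the paper: apply Lemma~\ref{prop:ideal-gen} with $A=B\cup C$, identify $\ker\pi_{B\cup C}$ with $\ker\pi_B\cap\ker\pi_C$, invoke primeness of $I$, and conclude via $\rho(\ker\pi_B)=B$ from \eqref{rhoF}. The only cosmetic difference is that you obtain the intersection identity from the explicit formula for $\ker\pi_F$ in Theorem~\ref{psi-invariant}, whereas the paper deduces it by comparing intersections with $D$ and citing Lemma~\ref{intersectunique}; these amount to the same thing.
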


\begin{proof}
  It follows from Lemma \ref{psiinv} that $\rho(I)$ is closed and
  totally $\phi$-invariant.
  To show that $\rho(I)$ is also prime, assume that $B$ and $C$ are
  closed totally $\phi$-invariant subsets such that $\rho(I)\subseteq
  B\cup C$.  
  It follows then from
  Lemma \ref{prop:ideal-gen} that $\ker(\pi_{B\cup C})\subseteq
  I$. Since $\ker \pi_B \cap \ker \pi_C \cap D =
  C_0(Y \backslash B) \cap C_0(Y \backslash C) = C_0\left(Y \backslash
  (B\cup C)\right) =  \ker \pi_{B \cup C} \cap D$ it follows from
Lemma \ref{intersectunique} that $\ker \pi_B \cap \ker \pi_C = \ker
\pi_{B\cup C}$. Therefore $\ker(\pi_B)\subseteq I$ or
  $\ker(\pi_C)\subseteq I$ since $I$ is prime. Hence $\rho(I)\subseteq B$ or
  $\rho(I)\subseteq C$, thanks to (\ref{rhoF}).
\end{proof}

We say that a point $x\in Y$ is $\phi$-periodic if $\phi^n(x)=x$
for some $n>0$. Let $\per $ denote the set of $\phi$-periodic points $x\in
Y$ which are isolated in $\orb(x)$ and let
$$
\cipp =\{\overline{\orb(x)} :  x\in \per\}
$$ 
and 
$$
\cipa =\cip\setminus\cipp.
$$
Let $A \subseteq Y$ be a closed totally $\phi$-invariant subset. We say that
$\phi|_A$ is \emph{topologically free} if the set of $\phi$-periodic
points in $A$ has empty interior in $A$.

\begin{prop} (Cf. Proposition 11.3 of \cite{K}.) \label{prop:free}
  Let $A\in\cip$. Then $\phi|_A$ is topologically free if and only
  if $A\in\cipa$. 
\end{prop}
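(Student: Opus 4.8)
The plan is to prove both implications by contraposition, unwinding the definitions of $\cipp$ and topological freeness and using Proposition~\ref{glemt} together with the Baire category theorem. By Proposition~\ref{glemt} we may write $A = \overline{\orb(x_0)}$ for some $x_0 \in Y$. First I would record the easy direction: if $A \in \cipp$, then $A = \overline{\orb(x)}$ for some $\phi$-periodic $x$ which is isolated in $\orb(x)$. Since $x$ is periodic, every point of $\orb(x)$ is eventually periodic, and in fact $\orb(x)$ is a finite set: if $\phi^n(x)=x$, then $\phi^n(\orb(x)) \subseteq \orb(x)$ and every $y \in \orb(x)$ satisfies $\phi^k(y) = \phi^m(x)$ for some $k,m$, so there are only finitely many orbit branches landing on the finite cycle of $x$ — here I should check carefully, using local injectivity of $\phi$ (each $\phi$ has finite fibres on a compact space), that $\orb(x)$ is genuinely finite, not merely discrete. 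Then $A = \overline{\orb(x)} = \orb(x)$ is a finite set of periodic points, so the set of $\phi$-periodic points in $A$ is all of $A$, which has nonempty interior in itself; hence $\phi|_A$ is not topologically free.

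For the converse, suppose $\phi|_A$ is \emph{not} topologically free, so the set $\Per_A$ of $\phi$-periodic points in $A$ has nonempty interior in $A$. I want to conclude $A \in \cipp$. Write $\Per_A = \bigcup_{n\geq 1} P_n$ where $P_n = \{z \in A : \phi^n(z) = z\}$; each $P_n$ is closed in $A$. Since $\Per_A$ has nonempty interior in $A$, pick a nonempty open $U \subseteq A$ with $U \subseteq \Per_A$; then $\{P_n \cap U\}_n$ is a countable cover of the (locally compact, hence Baire) space $U$ by relatively closed sets, so by Baire some $P_N \cap U$ has nonempty interior in $U$, hence in $A$. Thus there is a nonempty open $V \subseteq A$ on which $\phi^N = \id$. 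Now I use primeness of $A$: the set $B := A \setminus \bigcup_{l,m \geq 0} \phi^{-l}(\phi^m(V))$ is closed and totally $\phi$-invariant (the argument is exactly as in the proof of Proposition~\ref{glemt}), and $B \neq A$; on the other hand I claim the total $\phi$-orbit of any single point of $V$ is dense in $A$. Indeed, if $z \in V$ then by the density $A = \overline{\orb(x_0)}$ and an argument like the induction in Proposition~\ref{glemt} — combined with primeness applied to $V$ and a given basic open $U_k$ — one produces a point of $\orb(z)$ in every basic open set; alternatively, observe directly that since $A$ is prime and $V$ is open, $A = \overline{\bigcup_{l,m}\phi^{-l}(\phi^m(V))}$, which is precisely $\overline{\orb(z)}$ for $z \in V$ once one checks $\bigcup_{l,m}\phi^{-l}(\phi^m(V)) = \bigcup_{z \in V}\orb(z)$ and that all these points have the same closed orbit by the prime/minimality property. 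So $A = \overline{\orb(z)}$ for a periodic point $z \in V$ (periodic since $\phi^N = \id$ on $V$).

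It remains to show such a $z$ can be chosen \emph{isolated in its own orbit}, which is what membership in $\per$ requires. Here I would argue: shrink $V$ so that $V$ is a compact neighbourhood in $A$ on which $\phi^N = \id$ and moreover $\phi, \phi^2, \dots, \phi^{N-1}$ are injective on $V$ (possible by local injectivity of $\phi$ and compactness, passing to a smaller neighbourhood of a chosen point $z \in V$). Then the $\phi$-orbit of $z$ is the finite set $\{z, \phi(z), \dots, \phi^{N-1}(z)\}$ together with the finitely many pre-branches; the key point is that $\overline{\orb(z)} = A$ forces $A$ to be \emph{minimal} among closed totally $\phi$-invariant sets containing $z$, and combined with $z$ periodic this should force $\orb(z)$ itself to be closed, hence $= A$, hence finite, making $z$ trivially isolated in $\orb(z)$. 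I expect \textbf{the main obstacle} to be exactly this last step — establishing that a periodic point whose closed orbit is a prime set is automatically isolated in its orbit (equivalently, that $\orb(z)$ is already closed). This likely needs the finiteness of $\phi$-fibres and a careful separation argument distinguishing $z$ from the other (finitely many) points of $\orb(z)$ inside the compact metric space $Y$; it is the analogue in this setting of the corresponding step in Katsura's Proposition~11.3 of \cite{K}, and I would model the bookkeeping on that proof.
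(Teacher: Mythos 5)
Your proposal has genuine gaps, and they sit exactly at the load-bearing points. First, in the easy direction your claim that for $x\in\per$ the set $A=\overline{\orb(x)}=\orb(x)$ is finite and consists of periodic points is false: the total orbit $\orb(x)$ contains \emph{all} backward iterates of the cycle and is typically infinite, and $A$ typically contains non-periodic points. For instance, let $Y=\{p\}\cup\{x_n:n\ge 1\}\cup\{x_\infty\}$ with $x_n\to x_\infty$ and $\phi(p)=p$, $\phi(x_1)=p$, $\phi(x_{n+1})=x_n$, $\phi(x_\infty)=x_\infty$; then $p\in\per$, while $\overline{\orb(p)}=Y$ is infinite, not equal to $\orb(p)$, and contains the non-periodic points $x_n$. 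What membership in $\per$ actually gives is only that $x$ is isolated in $\orb(x)$, hence (by density of $\orb(x)$ in $A$) isolated in $A$; that alone exhibits a nonempty open set of periodic points, which is all this direction needs and is how the paper argues.

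Second, and more seriously, the converse's key step --- producing a periodic point with dense total orbit which is \emph{isolated in its orbit} --- is the part you defer, and the route you sketch for it cannot work: primeness of $A$ does not force minimality, and in the example above $A=\overline{\orb(p)}\in\cipp$ is infinite and $\orb(p)$ is not closed, so the chain ``$A$ minimal $\Rightarrow\orb(z)$ closed $\Rightarrow A$ finite'' breaks at every link. There is also an earlier gap: primeness gives density of $\bigcup_{z\in V}\orb(z)$ in $A$, but not density of a single $\orb(z)$; the paper sidesteps this by starting from $x$ with $\overline{\orb(x)}=A$ (Proposition \ref{glemt}) and choosing $y\in U\cap\orb(x)$, so that $\orb(y)=\orb(x)$ is automatically dense. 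The missing idea for the isolation step is an elementary computation, not a separation argument: if $y$ is periodic with $\phi^n(y)=y$ and $z$ is any periodic point in $\orb(y)$, say $\phi^m(z)=z$ and $\phi^k(z)=\phi^l(y)$, then $z=\phi^{mk}(z)=\phi^{(m-1)k+l}(y)\in\{y,\phi(y),\dots,\phi^{n-1}(y)\}$. Applied to the open set $U$ of periodic points (which meets $\orb(y)$ by density), this shows $U\subseteq\{y,\phi(y),\dots,\phi^{n-1}(y)\}$, so $U$ is finite and $y$ is isolated in $A$, hence in $\orb(y)$, giving $A\in\cipp$. Your Baire reduction to $\phi^N=\mathrm{id}$ on $V$ and the shrinking of $V$ to make powers of $\phi$ injective are not needed and do not substitute for this argument.
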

 
\begin{proof}
  We will show that $\phi|_A$ is not topologically free if and only
  if $A\in\cipp$. If $x\in\per$ and $A=\overline{\orb(x)}$, then $\phi|_A$ is not
  topologically free because $x$ is periodic and isolated in $\orb(x)$
  and thus in $A$. Assume then that $\phi|_A$ is not topologically free. There is then
  a non-empty open subset $U\subseteq A$ such that every element of $U$ is
  $\phi$-periodic. Choose $x\in A$ such that
  $A=\overline{\orb(x)}$. Then $U\cap\orb(x)\ne\emptyset$. Let $y\in
  U\cap\orb(x)$. Then $y$ is $\phi$-periodic and
  $\overline{\orb(y)}=\overline{\orb(x)}=A$, so if we can show that
  $y$ is isolated in $\orb(y)$, then we have that $A\in\cipp$. 
  Since $y$ is $\phi$-periodic there is
  an $n\ge 1$ such that $\phi^n(y)=y$. We claim that
  $U\subseteq\{y,\phi(y),\dots,\phi^{n-1}(y)\}$. It will then follow that $y$
  is isolated in $A$ and thus in $\orb(y)$.

  Assume that $U\setminus \{y,\phi(y),\dots,\phi^{n-1}(y)\}$ is
  non-empty. Since it is also open it follows that $\orb(y)\cap
  U\setminus \{y,\phi(y),\dots,\phi^{n-1}(y)\}$ is non-empty. Let
  $z\in \orb(y)\cap
  U\setminus \{y,\phi(y),\dots,\phi^{n-1}(y)\}$. Since $z\in U$ there
  is an $m\ge 1$ so that $\phi^m(z)=z$, and since $z\in\orb(y)$ there
  are $k,l\in\N$ such that $\phi^k(z)=\phi^l(y)$. But then
  $z=\phi^{mk}(z)=\phi^{(m-1)k+l}(y)\in
  \{y,\phi(y),\dots,\phi^{n-1}(y)\}$ and we have a contradiction. It
  follows that $U\subseteq\{y,\phi(y),\dots,\phi^{n-1}(y)\}$.
\end{proof}

In particular, it follows from Proposition \ref{prop:free} that the
elements of $\cipa$ are infinite sets.

\begin{prop} (Cf. Proposition 11.5 of \cite{K}.) \label{prop:aper}
  Let $A\in\cipa$. Then $\ker\pi_A$ is the unique ideal $I$ in
  $C_r^*(\Gamma_\phi)$ with $\rho(I)=A$.
\end{prop}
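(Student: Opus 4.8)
The plan is to show two things for $A \in \cipa$: first, that any ideal $I$ with $\rho(I) = A$ must contain $\ker \pi_A$, which is immediate from Lemma \ref{prop:ideal-gen} applied with the closed totally $\phi$-invariant set $A$ itself; and second, the harder inclusion $I \subseteq \ker \pi_A$, for which I would pass to the quotient $C_r^*(\Gamma_{\phi|_A})$. Concretely, since $\ker \pi_A \subseteq I$, the ideal $I$ corresponds to an ideal $\overline{I}$ in $C_r^*(\Gamma_{\phi|_A}) = C_r^*(\Gamma_\phi)/\ker\pi_A$ with $\overline{I} \cap C(A) = (I \cap C(Y))/C_0(Y\setminus A)$; by the definition of $\rho$ and $\rho(I) = A$, this intersection is $\{0\}$, i.e. $\overline{I} \cap D_{\Gamma_{\phi|_A}} = 0$. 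So it suffices to prove that a nonzero ideal of $C_r^*(\Gamma_{\phi|_A})$ has nonzero intersection with $C(A)$; equivalently, that the canonical conditional expectation onto $D_{\Gamma_{\phi|_A}} = C(A)$ is ``faithful on ideals'' — detecting any nonzero ideal.

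The main obstacle is exactly this detection property, and here is where $A \in \cipa$ (rather than $\cipp$) is essential. By Proposition \ref{prop:free}, $A \in \cipa$ means $\phi|_A$ is topologically free, i.e. the $\phi$-periodic points in $A$ have empty interior. The plan is to invoke the standard topological-freeness argument for \'etale groupoids (in the style of Proposition II.4.6 of \cite{Re} / Archbold--Spielberg): for $\phi|_A$ topologically free one shows that for every $a \in C_c(\Gamma_{\phi|_A})$ and $\epsilon > 0$ there is a positive $h \in C(A)$ with $\|h\| = 1$ such that $\|h(\pi_x(a))h - h\pi_x(P_{\Gamma_{\phi|_A}}(a))h\| < \epsilon$ for a suitable evaluation point $x$; this lets one push the norm of $a$ modulo $I$ down to the norm of $P_{\Gamma_{\phi|_A}}(a)$ modulo $I \cap C(A) = 0$, forcing $a \in I \Rightarrow P_{\Gamma_{\phi|_A}}(a) = 0$, hence $a = 0$. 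I expect this to require a little care because $\phi|_A$ is only a local homeomorphism on $A$, not a homeomorphism, so the relevant groupoid $\Gamma_{\phi|_A}$ is the Deaconu--Renault groupoid and one needs the version of topological freeness adapted to that setting — but this is precisely what is available from \cite{Th1} (and implicitly from \cite{An}, \cite{K}); indeed the statement is cited as a translation of Proposition 11.5 of \cite{K}, so the intended proof likely just records that $\ker \pi_A \cap D_{\Gamma_\phi} = C_0(Y\setminus A)$ is a maximal element among ideals with that fixed intersection via Lemma \ref{intersectunique} together with the topological freeness of $\phi|_A$.

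So in order, the steps are: (1) note $\ker\pi_A \subseteq I$ by Lemma \ref{prop:ideal-gen}; (2) factor out $\ker\pi_A$ and observe $(I/\ker\pi_A) \cap C(A) = 0$ using $\rho(I) = A$ and Proposition \ref{prop:canonic}; (3) invoke topological freeness of $\phi|_A$ (from $A\in\cipa$ via Proposition \ref{prop:free}) to conclude that the conditional expectation $P_{\Gamma_{\phi|_A}} : C_r^*(\Gamma_{\phi|_A}) \to C(A)$ detects ideals, hence $I/\ker\pi_A = 0$; (4) conclude $I = \ker\pi_A$. The one genuinely substantive ingredient is step (3); everything else is bookkeeping with the exact sequence of Proposition \ref{prop:canonic} and the injectivity statement of Lemma \ref{intersectunique}. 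If a cleaner route is wanted, one can alternatively argue directly in $C_r^*(\Gamma_\phi)$: any ideal $I$ with $\rho(I) = A$ satisfies $I \cap D_{\Gamma_\phi} = C_0(Y\setminus A) = \ker\pi_A \cap D_{\Gamma_\phi}$, so if one first checks that such an $I$ is automatically gauge-invariant — which follows because topological freeness of $\phi|_A$ forces $I$ to be generated by $I \cap D_{\Gamma_\phi}$ — then Lemma \ref{intersectunique} gives $I = \ker\pi_A$ at once. Either way the crux is the topological-freeness input provided by $A \in \cipa$.
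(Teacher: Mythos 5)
Your proposal follows the paper's proof essentially step for step: $\ker\pi_A \subseteq I$ via Lemma \ref{prop:ideal-gen}, passing to $\pi_A(I)$ and observing $\pi_A(I) \cap C(A) = \{0\}$ from $\rho(I) = A$, and then using topological freeness of $\phi|_A$ (from $A \in \cipa$ and Proposition \ref{prop:free}) to conclude that $C(A)$ detects ideals in $C_r^*(\Gamma_{\phi|_A})$. The only point you leave implicit is the one the paper actually spells out: since the isotropy in $\Gamma_{\phi|_A}$ is non-trivial at all pre-periodic points (not just periodic ones), one needs a short Baire category argument to pass from ``periodic points have empty interior'' to density of points with trivial isotropy, after which Lemma 2.15 of \cite{Th1} yields the detection property you invoke.
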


\begin{proof}
  We have already in Proposition \ref{prop:canonic} seen that
  $\rho(\ker\pi_A)=A$. Assume that $I$ is an ideal in
  $C_r^*(\Gamma_\phi)$ with $\rho(I)=A$. It follows then from
  Lemma \ref{prop:ideal-gen} that $\ker\pi_A\subseteq I$. Thus
  it sufficies to show that $\pi_A(I)=\{0\}$. Note that $\pi_A(I)$ is an ideal in
  $C_r^*(\Gamma_{\phi|_A})$ with $\rho(\pi_A(I))=A$. It
  follows that $\pi_A(I)\cap C(A)=\{0\}$. To conclude from this that
  $\pi_A(I) = \{0\}$ we will show that the points of $A$
  whose isotropy group in $\Gamma_{\phi|_A}$ is trivial are dense in
  $A$. It will then follow from
  Lemma 2.15 of \cite{Th1} that $\pi_A(I)=\{0\}$ because $\pi_A(I) \cap
  C(A) = \{0\}$. 
  That the points of $A$ with trivial isotropy in $\Gamma_{\phi|_A}$ are dense in
  $A$ is established as follows: The points in $A$ with
  non-trivial isotropy in $\Gamma_{\phi|A}$ are the pre-periodic
  points in $A$. Let $\Per_n A$ denote the set of points in $A$ with
  minimal period $n$ under $\phi$ and note that $\Per_n A$ is closed and
  has empty interior since
  $\phi|_A$ is topologically free by Proposition \ref{prop:free}. It follows that
  $A \backslash \phi^{-k}\left(\Per_n A\right)$ is open and dense in $A$
  for all $k,n$. By the Baire category theorem it follows that
  $$
  A \backslash \left( \bigcup_{k,n} \phi^{-k}\left(\Per_n A\right) \right)
  = \bigcap_{k,n} A \backslash \phi^{-k}\left(\Per_n A\right)
  $$
  is dense in $A$, proving the claim.

\end{proof}

\begin{lemma}\label{prop2}
Let $A\in\cipa$. Then $\ker\pi_A$ is a primitive ideal.
\begin{proof} Let $A = \overline{\orb(x)}$. To show that $\ker \pi_A$ is primitive it suffices to show that it is
prime, cf. Proposition 4.3.6 of \cite{Pe}. Equivalently, it suffices
to show that $C^*_r\left(\Gamma_{\phi|_A}\right)$ is a prime
$C^*$-algebra. Consider therefore two ideals $I_j \subseteq
C^*_r\left(\Gamma_{\phi|_A}\right), j = 1,2$, such that $I_1I_2 =
\{0\}$. Then
$$
\left\{ y \in A : \ f(y) = 0 \ \forall f \in I_1 \cap C(A) \right\}
\cup \left\{ y \in A : \ f(y) = 0 \ \forall f \in I_2 \cap C(A)
\right\} = A .
$$ 
In particular, $x$ must be in $\left\{ y \in A : \ f(y) = 0 \ \forall
  f \in I_j \cap C(A) \right\}$, either for $j = 1$ or $j =2$. It
follows then from Lemma \ref{psiinv}, applied to $\phi|_A$, that 
$$
A
= \left\{ y \in A : \ f(y) = 0 \ \forall f \in I_j \cap C(A)
\right\}.
$$ 
Hence $I_j = \{0\}$ by Proposition \ref{prop:aper} applied
to $\phi|_A$.
\end{proof}
\end{lemma}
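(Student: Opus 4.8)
The plan is to reduce the primitivity of $\ker\pi_A$ to the primeness of the quotient $C^*$-algebra $C^*_r\left(\Gamma_{\phi|_A}\right)$, exactly as one expects: since $C^*_r\left(\Gamma_{\phi}\right)$ is separable, so is its quotient, and for separable $C^*$-algebras prime equals primitive (Proposition 4.3.6 of \cite{Pe}), so it suffices to show $C^*_r\left(\Gamma_{\phi|_A}\right)$ has no pair of nonzero ideals with zero product. Write $A = \overline{\orb(x)}$ using Proposition \ref{glemt}.

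Next I would take two ideals $I_1, I_2 \subseteq C^*_r\left(\Gamma_{\phi|_A}\right)$ with $I_1 I_2 = \{0\}$ and pass to the abelian picture. Each $I_j$ determines a closed subset of $A$ via $\{y \in A : f(y) = 0 \ \forall f \in I_j \cap C(A)\}$; since $I_1 I_2 = \{0\}$ forces $(I_1 \cap C(A))(I_2 \cap C(A)) = \{0\}$ inside $C(A)$, the union of these two closed sets is all of $A$. The distinguished point $x$ must therefore lie in one of them, say the one coming from $I_j$. Now apply Lemma \ref{psiinv} (with $\phi|_A$ in place of $\phi$): that closed set is totally $\phi|_A$-invariant, hence it is closed and $\phi$-saturated in the sense needed, so it contains $\overline{\orb(x)} = A$. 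Thus $I_j \cap C(A) = \{0\}$.

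The final step is to upgrade $I_j \cap C(A) = \{0\}$ to $I_j = \{0\}$. This is where Proposition \ref{prop:aper} does the work: applied to $\phi|_A$ (noting $A \in \cipa$, so $\phi|_A$ is topologically free by Proposition \ref{prop:free}), it says $\ker\pi_A = \{0\}$ is the unique ideal of $C^*_r\left(\Gamma_{\phi|_A}\right)$ with prescribed hull $A$; but any ideal $I_j$ with $I_j \cap C(A) = \{0\}$ has hull equal to $A$, so $I_j = \{0\}$. (Alternatively, the density-of-trivial-isotropy argument embedded in the proof of Proposition \ref{prop:aper}, via Lemma 2.15 of \cite{Th1}, gives the same conclusion directly.) Hence $C^*_r\left(\Gamma_{\phi|_A}\right)$ is prime, so $\ker\pi_A$ is prime, hence primitive.

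The only point requiring a little care is the legitimacy of applying Lemma \ref{psiinv} and Proposition \ref{prop:aper} \emph{to the restricted system} $\phi|_A : A \to A$ rather than to $\phi$ itself. This is fine because $\phi|_A$ is again a local homeomorphic surjection of a compact Hausdorff space and $C^*_r\left(\Gamma_{\phi|_A}\right)$ is built from it in precisely the same way, so every result proved so far in this section applies verbatim with $Y$ replaced by $A$; the hypothesis $A \in \cipa$ is exactly what guarantees the topological freeness needed to invoke Proposition \ref{prop:aper}. I do not anticipate any genuine obstacle beyond bookkeeping.
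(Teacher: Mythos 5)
Your proposal is correct and follows essentially the same route as the paper's proof: reduce primitivity to primeness of the quotient $C^*_r\left(\Gamma_{\phi|_A}\right)$ via separability, show the zero sets of $I_1\cap C(A)$ and $I_2\cap C(A)$ cover $A$, use Lemma \ref{psiinv} (for $\phi|_A$) together with density of $\orb(x)$ to conclude one of these zero sets is all of $A$, and finish with Proposition \ref{prop:aper} applied to the restricted system. The extra justifications you supply (the product $(I_1\cap C(A))(I_2\cap C(A))=\{0\}$ and the topological freeness of $\phi|_A$ needed for Proposition \ref{prop:aper}) are exactly the details the paper leaves implicit.
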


Let $A\in\cipp$. Choose $x\in\per$ such that $\overline{\orb(x)}=A$,
and let $n$ be the minimal period of $x$. Then $x$ is isolated in
$A$. It follows that the characteristic functions $1_{(x,0,x)}$ and $1_{(x,n,x)}$
belong to $C_r^*(\Gamma_{\phi|_A})$. Let
$p_x=1_{(x,0,x)}$ and $u_x=1_{(x,n,x)}$. For $w\in\T$ let
$\dot{P}_{x,w}$ denote the ideal in  $C_r^*(\Gamma_{\phi|_A})$
generated by $u_x-wp_x$.

\begin{lemma} \label{remark:ens}
  Suppose that $x,y\in\per$ and that
  $\overline{\orb(x)}=\overline{\orb(y)}$ and let $w\in\T$. Then
  $\dot{P}_{x,w}=\dot{P}_{y,w}$. 
\end{lemma}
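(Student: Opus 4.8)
The plan is to show that both ideals coincide with a single ideal $\dot P_{A,w}$ that depends only on the closed orbit $A = \overline{\orb(x)} = \overline{\orb(y)}$ and on $w$, by exploiting the fact that $x$ and $y$ lie on the \emph{same} periodic cycle inside $A$. First I would record the basic structural facts: since $x$ is $\phi$-periodic with minimal period $n$ and isolated in $A$, the whole finite cycle $\{x,\phi(x),\dots,\phi^{n-1}(x)\}$ consists of isolated points of $A$, and $\orb(x) = \orb(y)$ being dense with $y$ isolated in it forces $y$ to lie on this very cycle, say $y = \phi^j(x)$ for some $0 \le j < n$, with the same minimal period $n$. Thus the reduction of $\Gamma_{\phi|_A}$ to the clopen saturated set $\{x,\dots,\phi^{n-1}(x)\}$ is (isomorphic to) the transformation groupoid of the cyclic rotation on $n$ points, whose $C^*$-algebra is $M_n\bigl(C(\T)\bigr)$, and $1_{(x,0,x)}$, $1_{(x,n,x)}$ correspond there to a rank-one projection and a partial isometry implementing the cycle.

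The key step is then a direct computation in this corner. I would verify that $u_x - w p_x$ and $u_y - w p_y$ generate the same ideal by producing explicit elements of $C^*_r(\Gamma_{\phi|_A})$ conjugating one generator to (a scalar multiple of, or a product leading to) the other. Concretely, writing $s = 1_{(\phi^{j-1}(x),1,\phi^j(x))}\cdots 1_{(x,1,\phi(x))}$ for the partial isometry along the path from $x$ to $y = \phi^j(x)$ of length $j$, one has $s^* s = p_x$, $s s^* = p_y$, and $s u_x = u_y s$ (both sides equal $1_{(y,n,x)}$ read appropriately, since the period is $n$ on the whole cycle). Hence $s(u_x - w p_x)s^* = u_y s s^* - w s s^* = u_y p_y - w p_y = u_y - w p_y$ (using $u_y p_y = u_y$), which shows $u_y - w p_y \in \dot P_{x,w}$, and the symmetric identity $s^*(u_y - w p_y)s = u_x - w p_x$ gives the reverse inclusion.

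The main obstacle — really the only thing needing care — is pinning down that $y$ sits on the same cycle as $x$ with the same minimal period, and that the path partial isometry $s$ genuinely lies in $C^*_r(\Gamma_{\phi|_A})$ and satisfies the intertwining $s u_x = u_y s$ with the correct exponent $n$. The first point follows from the argument already used in the proof of Proposition \ref{prop:free}: any isolated periodic point of $\orb(x)$ must be among $\{x,\phi(x),\dots,\phi^{n-1}(x)\}$, so $y$ is one of these and automatically has minimal period $n$. The second is a matter of checking that each factor $1_{(\phi^{i}(x),1,\phi^{i+1}(x))}$ is a continuous compactly supported function on $\Gamma_{\phi|_A}$ — which holds because each point of the cycle is isolated in $A$, so the relevant singletons are clopen in the groupoid — and then the convolution identities $s^*s = p_x$, $ss^* = p_y$, $su_x = u_y s$, $u_y p_y = u_y$ are immediate from the definition of $\star$ on characteristic functions of the appropriate units and arrows. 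Once these identities are in hand the inclusion in both directions is the one-line conjugation above, completing the proof.
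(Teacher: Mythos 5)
Your proposal is correct and follows essentially the same route as the paper: show that $y$, being isolated in $\overline{\orb(x)}$, lies in $\orb(x)$ and hence on the cycle of $x$, then conjugate $u_x-wp_x$ to $u_y-wp_y$ by a partial isometry given by the characteristic function of an arrow from $x$ to $y$. The only cosmetic difference is that the paper takes $v=1_{(x,i,y)}$ in one stroke (so it only needs $x$ and $y$ isolated in $A$), whereas you assemble the same element as a product of one-step isometries along the cycle; the algebraic identities and the conclusion are the same.
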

\begin{proof}
  By symmetry, it is enough to show that
  $\dot{P}_{y,w}\subseteq\dot{P}_{x,w}$. 
  Since $y$ is isolated in $\orb(y)$, it is isolated in
  $\overline{\orb(y)}=\overline{\orb(x)}$. Thus $y$ must belong to
  $\orb(x)$. This means that there are $k,l\in\N$ such that
  $\phi^k(x)=\phi^l(y)$. Since $y$ is $\phi$-periodic, it follows that
  there is an $i\in\N$ such that $y=\phi^i(x)$. Let
  $A=\overline{\orb(y)}=\overline{\orb(x)}$. Since $x$ and $y$ are
  isolated in $A$ we have that
  $1_{(x,i,y)}\in C_r^*(\Gamma_{\phi|_A})$. Let $v=1_{(x,i,y)} $. It
  is easy to check that $v^*p_xv=p_y$ and that $v^*u_xv=u_y$. Thus
  $u_y-wp_y=v^*(u_x-wp_x)v\in \dot{P}_{x,w}$ and it follows that
  $\dot{P}_{y,w}\subseteq\dot{P}_{x,w}$.  
\end{proof}

Let $A\in\cipp$ and let $w\in\T$. It follows from Lemma \ref{remark:ens} that the ideal $\dot{P}_{x,w}$
does not depend of the particular choice of $x \in A \cap \Per$, as long as
$\overline{\orb(x)}=A$. We will therefore simply write $\dot{P}_{A,w}$
for $\dot{P}_{x,w}$. We then define $P_{A,w}$ to be the ideal
$\pi_A\inv(\dot{P}_{A,w})$ in $C_r^*(\Gamma_\phi)$.

\begin{prop} (Cf. Proposition 11.13 of \cite{K}.) \label{prop:per}
  Let $A\in\cipp$. Then
  \begin{equation*}
    w\mapsto P_{A,w}
  \end{equation*}
  is a bijection between $\T$ and the set of primitive ideals $P$ in
  $C_r^*(\Gamma_\phi)$ with $\rho(P)=A$.
\end{prop}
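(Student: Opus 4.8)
The statement to prove is that for $A \in \cipp$, the map $w \mapsto P_{A,w}$ is a bijection from $\T$ onto the set of primitive ideals $P$ of $C_r^*(\Gamma_\phi)$ with $\rho(P) = A$. Fix $x \in \per$ with $\overline{\orb(x)} = A$ and let $n$ be the minimal period of $x$. The strategy is to reduce, via the exact sequence of Proposition~\ref{prop:canonic}, to analysing the quotient $C_r^*(\Gamma_{\phi|_A})$: by Lemma~\ref{prop:ideal-gen} every ideal $I$ with $\rho(I) = A$ contains $\ker \pi_A$, so $I \mapsto \pi_A(I)$ is an order isomorphism onto the ideals $\dot I$ of $C_r^*(\Gamma_{\phi|_A})$ with $\rho(\dot I) = A$ (i.e. $\dot I \cap C(A) = \{0\}$), and this correspondence sends primitive ideals to primitive ideals. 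So it suffices to show $w \mapsto \dot P_{A,w}$ is a bijection from $\T$ onto the primitive ideals $\dot P$ of $C_r^*(\Gamma_{\phi|_A})$ with $\dot P \cap C(A) = \{0\}$.

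\textbf{Key steps.} First I would understand the structure of $C_r^*(\Gamma_{\phi|_A})$ near the isolated periodic orbit. Since $x$ is isolated in $A$, the orbit $\{x, \phi(x), \dots, \phi^{n-1}(x)\}$ is a clopen subset of $A$ which is totally $\phi$-invariant, so it splits off a direct summand. Concretely, the reduction $\Gamma_{\phi|_A}$ restricted to this finite orbit is (isomorphic to) the groupoid of the full shift on a single periodic point, whose $C^*$-algebra is $M_n(C(\T)) \cong M_n \otimes C^*(\Z)$; under this identification $p_x$ is a rank-one constant projection and $u_x$ corresponds to the generating unitary, so the ideal $\dot P_{x,w}$ generated by $u_x - w p_x$ corresponds, via evaluation, to the kernel of $\mathrm{ev}_w : C(\T) \to \C$ tensored up, i.e. to a maximal ideal of $M_n(C(\T))$ of codimension $n^2$. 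Hence $C_r^*(\Gamma_{\phi|_A})/\dot P_{x,w} \cong M_n$, which is simple, so each $\dot P_{A,w}$ is primitive; moreover distinct $w$ give distinct (maximal) ideals, giving injectivity. Second, I would verify $\rho(P_{A,w}) = A$: the image of $\dot P_{A,w}$ in $C(A)$ is contained in the functions vanishing on $\{x, \dots, \phi^{n-1}(x)\}$, and since that orbit is dense in $A$, $\dot P_{A,w} \cap C(A) = \{0\}$, so $\rho(P_{A,w}) = A$ by~\eqref{rhoF}-type reasoning.

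\textbf{Surjectivity.} The remaining and main point is that every primitive ideal $\dot P$ of $C_r^*(\Gamma_{\phi|_A})$ with $\dot P \cap C(A) = \{0\}$ equals $\dot P_{A,w}$ for some $w$. Here I would use that $C_r^*(\Gamma_{\phi|_A})$ decomposes as a "direct sum" of the piece over the clopen finite orbit — which is $M_n(C(\T))$ — and the piece over $A \setminus \{x, \dots, \phi^{n-1}(x)\}$; more precisely, letting $F_0 = \{x, \dots, \phi^{n-1}(x)\}$, the ideal $C_r^*(\Gamma_{\phi|_{A \setminus F_0}})$ is the kernel of the restriction $\pi_{F_0} : C_r^*(\Gamma_{\phi|_A}) \to M_n(C(\T))$ and, because $F_0$ is clopen, this restriction is a split surjection, so $C_r^*(\Gamma_{\phi|_A}) \cong M_n(C(\T)) \oplus C_r^*(\Gamma_{\phi|_{A \setminus F_0}})$. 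A primitive ideal $\dot P$ of this direct sum is either (a) $M_n(C(\T)) \oplus J$ for a primitive ideal $J$ of the second summand, or (b) $\dot P_0 \oplus C_r^*(\Gamma_{\phi|_{A \setminus F_0}})$ for a primitive ideal $\dot P_0$ of $M_n(C(\T))$. In case (a), $\dot P$ contains $M_n(C(\T))$, hence contains $p_x$ and therefore (via the center) contains $1_{F_0} \in C(A)$, contradicting $\dot P \cap C(A) = \{0\}$ (here I use that the finite orbit $F_0$ is nonempty and its characteristic function is a nonzero element of $C(A)$ lying in $\dot P$). So we are in case (b); the primitive ideals of $M_n(C(\T))$ are exactly $\ker(\mathrm{ev}_w)\otimes M_n$, $w \in \T$, which correspond precisely to the $\dot P_{A,w}$.

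\textbf{Main obstacle.} The hard part is making the "direct summand" claim rigorous: showing that the finite periodic orbit $F_0$, being isolated in $A$, really does split $C_r^*(\Gamma_{\phi|_A})$ as a direct sum, and identifying the summand with $M_n(C(\T))$ with $p_x, u_x$ in the expected position. This rests on the clopenness of $F_0$ in $A$ (so that $1_{F_0} \in C(A) \subseteq C_r^*(\Gamma_{\phi|_A})$ is a central projection), on the computation that $\Gamma_{\phi|_{F_0}}$ is the groupoid $\{(\phi^i(x), i - j + kn, \phi^j(x)) : i,j \in \{0,\dots,n-1\},\ k \in \Z\}$ whose $C^*$-algebra is $M_n \otimes C^*(\Z) \cong M_n(C(\T))$, and on checking that under the Fourier transform $C^*(\Z) \cong C(\T)$ the element $u_x - w p_x$ generates exactly the ideal of functions vanishing at $w$. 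Once this structural picture is in place, everything else — injectivity, $\rho(P_{A,w}) = A$, and the surjectivity dichotomy above — is a routine consequence, and the argument parallels Katsura's Proposition~11.13 in \cite{K}.
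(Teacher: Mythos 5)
Your opening reduction (push everything through $\pi_A$ using Lemma \ref{prop:ideal-gen} and Pedersen's correspondence) and your computation of the corner at $x$ agree with the paper, but the structural claim on which your surjectivity argument rests is false: the finite orbit $F_0=\{x,\phi(x),\dots,\phi^{n-1}(x)\}$ is indeed clopen in $A$, but it is in general \emph{not} totally $\phi$-invariant, and $C_r^*(\Gamma_{\phi|_A})$ does not decompose as $M_n(C(\T))\oplus C_r^*(\Gamma_{\phi|_{A\setminus F_0}})$. Total invariance would require $\phi^{-1}(F_0)\cap A=F_0$, but the cycle typically has entrances inside $A$: any $z\in\orb(x)\setminus F_0$ with $\phi(z)\in F_0$ destroys it. (Concretely, for the one-sided edge shift of the graph with edges $a\colon v\to v$, $b\colon w\to v$, $c\colon w\to w$, the point $x=a^\infty$ lies in $\per$, $A=\overline{\orb(x)}$ is infinite, and $ba^\infty\in A\setminus F_0$ maps onto $x$.) In fact your claim is incompatible with $A\in\cipp$ being infinite: if $F_0$ were clopen and totally invariant, then $A\setminus F_0$ would also be closed and totally invariant, and primeness of $A$ together with $x\in F_0$ would force $A=F_0$. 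Accordingly $1_{F_0}$ is not a central projection (elements $(z,1,\phi(z))$ with $z\in A\setminus F_0$, $\phi(z)\in F_0$ support partial isometries linking the two pieces), $\phi$ need not map $A\setminus F_0$ into itself, so $\Gamma_{\phi|_{A\setminus F_0}}$ is not available as used, and the assertion $C_r^*(\Gamma_{\phi|_A})/\dot P_{A,w}\cong M_n$ is wrong in general (it is also inconsistent with your own direct sum, whose quotient would be $M_n\oplus C_r^*(\Gamma_{\phi|_{A\setminus F_0}})$); indeed $P_{A,w}$ is maximal only when $A$ equals the finite orbit, cf.\ Lemma \ref{maxideal1}. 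With the decomposition gone, both your injectivity/primitivity argument and the case (a)/(b) dichotomy for surjectivity collapse.

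The repair is to localize at a corner rather than split off a summand, which is exactly what the paper does: note $\dot P_{A,w}\subseteq\langle p_x\rangle$ because $p_x(u_x-wp_x)=u_x-wp_x$; show that an ideal $Q$ of $C_r^*(\Gamma_{\phi|_A})$ satisfies $\rho(Q)=A$ if and only if $p_x\notin Q$, if and only if $\langle p_x\rangle\not\subseteq Q$; apply Theorem 4.1.11(ii) of \cite{Pe} to identify the primitive ideals $Q$ with $\rho(Q)=A$ with the primitive ideals of the ideal $\langle p_x\rangle$; and then use the Morita equivalence between $\langle p_x\rangle$ and the corner $p_xC_r^*(\Gamma_{\phi|_A})p_x\cong C(\T)$ (the isotropy of $x$ in $\Gamma_{\phi|_A}$ being $\{(x,kn,x):k\in\Z\}$, with $u_x$ the canonical unitary) to parametrize them by $w\in\T$, the ideal corresponding to $w$ being $p_x\dot P_{A,w}p_x$. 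Your identification of the corner and of $u_x-wp_x$ inside it is correct and is the part of your argument that survives; the primitive-ideal bookkeeping, however, must go through the hereditary subalgebra $\langle p_x\rangle$ and Morita equivalence, not through a direct-sum decomposition that cannot exist for infinite $A$.
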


\begin{proof}
  The map $P\mapsto\pi_A(P)$ gives a bijection between the primitive
  ideals in $C_r^*(\Gamma_\phi)$ with $\ker\pi_A\subseteq P$ and the
  primitive ideals in $C_r^*(\Gamma_{\phi|_A})$, cf. Theorem 4.1.11
  (ii) in \cite{Pe}. The inverse of this
  bijection is the map $Q\mapsto \pi_A\inv(Q)$.  
  If $P$ is a primitive ideal in $C_r^*(\Gamma_\phi)$ with $\rho(P)=A$, it
  follows from Lemma \ref{prop:ideal-gen} that
  $\ker\pi_A\subseteq P$. In addition $\rho(\pi_A(P))=A$. If on the other
  hand $Q$ is a primitive ideal in $C_r^*(\Gamma_{\phi|_A})$ with
  $\rho(Q)=A$, then $\pi_A\inv(Q)$ is a primitive ideal in
  $C_r^*(\Gamma_\phi)$ and $\rho(\pi_A\inv(Q))=A$. Thus
  $P\mapsto\pi_A(P)$ gives a bijection between the primitive ideals in
  $C_r^*(\Gamma_\phi)$ with $\rho(P)=A$ and the
  primitive ideals $Q$ in $C_r^*(\Gamma_{\phi|_A})$ with $\rho(Q)=A$.
  
  Choose $x\in\per$ such that $\overline{\orb(x)}=A$.
  Let $\langle p_x\rangle$ be the ideal in $C_r^*(\Gamma_{\phi|_A})$  generated by
  $p_x$. Observe that $\dot{P}_{A,w} \subseteq \langle p_x \rangle$
  for all $w \in \mathbb T$ since $p_x\left(u_x -wp_x\right) = u_x - wp_x$.
  The map $Q\mapsto Q\cap\langle p_x\rangle$ gives a bijection between
  the primitive ideals in $C_r^*(\Gamma_{\phi|_A})$ with $\langle
  p_x\rangle\nsubseteq Q$ and the primitive ideals in $\langle
  p_x\rangle$, cf. Theorem 4.1.11 (ii) in \cite{Pe}. We claim that
  $\langle p_x\rangle\nsubseteq Q$ if and only if $\rho(Q)=A$. To see
  this, let $Q$ be an ideal in $C_r^*(\Gamma_{\phi|_A})$. If $p_x\in Q$,
  then $x\notin \rho(Q)$ and $\rho(Q)\ne A$. If on the other hand
  $\rho(Q)\ne A$, then $x\notin \rho(Q)$ because $\rho(Q)$ is closed
  and totally $\phi$-invariant and $\overline{\orb(x)}=A$. It follows that there
  is an $f\in Q\cap C(A)$ such that $f(x)\ne 0$, whence $p_x\in
  Q$. This proves the claim and it follows that $Q\mapsto Q\cap\langle p_x\rangle$ gives a bijection between
  the primitive ideals in $C_r^*(\Gamma_{\phi|_A})$ with $\rho(Q)=A$ and the primitive ideals in $\langle
  p_x\rangle$.

  The $C^*$-algebra $\langle p_x\rangle$ is Morita equivalent to
  $p_xC_r^*(\Gamma_{\phi|_A})p_x$ via the
  $p_xC_r^*(\Gamma_{\phi|_A})p_x$- $\langle p_x\rangle$ imprimitivity
  bimodule $p_xC_r^*(\Gamma_{\phi|_A})$, and therefore $T\mapsto p_xTp_x$ gives a
  bijection between the primitive ideals $T$ in $\langle p_x\rangle$ and
  the primitive ideals in $p_xC_r^*(\Gamma_{\phi|A})p_x$,
  cf. Proposition 3.24 and Corollary 3.33 in \cite{RW}. Now note that 
  \begin{equation*}
    \{(x',n',y') \in\Gamma_{\phi|_A} : 
    x'=y' =x \}=\{(x,kn,x) :  k\in\Z\}
  \end{equation*}
  where $n$ is the smallest positive integer such that
  $\phi^n(x)=x$. It follows that $p_xC_r^*(\Gamma_{\phi|A})p_x$ is
  isomorphic to $C(\T)$ under an isomorphism taking the canonical
  unitary generator of $C(\mathbb T)$ to $u_x$. In this way we
  conclude that the primitive ideals of $p_xC_r^*(\Gamma_{\phi|A})p_x$
  are in one-to-one correspondance with $\mathbb T$ under the map
$$
\mathbb T \ni w \mapsto p_x\overline{C_r^*(\Gamma_{\phi|A})
\left(u_x-wp_x\right)C_r^*(\Gamma_{\phi|A})}p_x = p_x \dot{P}_{A,w} p_x.
$$
This completes the proof.
\end{proof}

By combining Proposition \ref{prop:prime}, \ref{prop:aper} and
\ref{prop:per} we get the following theorem.

\begin{thm}\label{primitive}
  The set of primitive ideals in $C_r^*(\Gamma_\phi)$ is the disjoint
  union of $\{\ker\pi_A :  A\in\cipa\}$ and $\{P_{A,w} :  A\in\cipp,\ w\in\T\}$.
\end{thm}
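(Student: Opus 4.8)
The plan is to assemble the final theorem purely from the bookkeeping already established, so essentially no new argument is needed — only a careful accounting of which primitive ideals arise and that the two families are disjoint and exhaustive. First I would recall that by Proposition~\ref{prop:prime}, every prime (equivalently primitive, by separability) ideal $I$ in $C_r^*(\Gamma_\phi)$ has $\rho(I)\in\cip$, and by Proposition~\ref{glemt} we have $\cip=\cipp\sqcup\cipa$. So the set of primitive ideals is partitioned according to whether $\rho(I)$ lies in $\cipa$ or in $\cipp$, and it suffices to describe, for each $A\in\cip$, the primitive ideals $I$ with $\rho(I)=A$.

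For $A\in\cipa$, Proposition~\ref{prop:aper} tells us there is a \emph{unique} ideal $I$ with $\rho(I)=A$, namely $\ker\pi_A$, and Lemma~\ref{prop2} confirms this ideal is in fact primitive. Hence the primitive ideals $I$ with $\rho(I)\in\cipa$ are exactly $\{\ker\pi_A :  A\in\cipa\}$, and the map $A\mapsto\ker\pi_A$ is injective on $\cipa$ because $\rho(\ker\pi_A)=A$ by \eqref{rhoF}. For $A\in\cipp$, Proposition~\ref{prop:per} gives that $w\mapsto P_{A,w}$ is a bijection between $\T$ and the primitive ideals $P$ with $\rho(P)=A$; so the primitive ideals with $\rho$-image in $\cipp$ are exactly $\{P_{A,w} :  A\in\cipp,\ w\in\T\}$, with $(A,w)\mapsto P_{A,w}$ injective since $\rho(P_{A,w})=A$ recovers $A$ and, for fixed $A$, $w\mapsto P_{A,w}$ is injective.

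Finally, the two families are disjoint: if $\ker\pi_A=P_{B,w}$ then applying $\rho$ gives $A=\rho(\ker\pi_A)=\rho(P_{B,w})=B$, contradicting $\cipp\cap\cipa=\emptyset$. Taking the union of the two descriptions over all $A\in\cip=\cipp\sqcup\cipa$ yields that the set of primitive ideals of $C_r^*(\Gamma_\phi)$ is exactly $\{\ker\pi_A :  A\in\cipa\}\sqcup\{P_{A,w} :  A\in\cipp,\ w\in\T\}$, which is the claim.

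I do not expect any genuine obstacle here; the only point requiring a little care is making sure the quantifiers line up — that Proposition~\ref{prop:prime} guarantees every primitive ideal's $\rho$-image is in $\cip$ (so nothing is missed), while Propositions~\ref{prop:aper} and~\ref{prop:per} together with Lemma~\ref{prop2} guarantee both that everything listed is primitive and that nothing primitive is omitted — and then invoking $\rho$ to get injectivity and disjointness of the two families.
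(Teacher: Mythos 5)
Your proposal is correct and follows exactly the same route as the paper, whose proof is simply the observation that Propositions~\ref{prop:prime}, \ref{prop:aper} and \ref{prop:per} combine to give the statement; your additional explicit appeal to Lemma~\ref{prop2} (to see that $\ker\pi_A$ is indeed primitive for $A\in\cipa$) and to $\rho$ for disjointness just spells out bookkeeping the paper leaves implicit.
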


\subsection{The maximal ideals}

The next step is to identify the maximal ideals among the primitive ones.

\begin{lemma}\label{intcx} Assume that not all points of $Y$ are
  pre-periodic and that $C^*_r\left(\Gamma_{\phi}\right)$ contains a
  non-trivial ideal.  It follows that there is a non-trivial
  gauge-invariant ideal $J$ in
  $C^*_r\left(\Gamma_{\phi}\right)$ such that $J \cap C(Y) \neq \{0\}$.
 \begin{proof} Let $I$ be a non-trivial ideal in
  $C^*_r\left(\Gamma_{\phi}\right)$. Assume first that $I \cap C(Y) =
  \{0\}$. Since we assume that not all points of $Y$ are pre-periodic
  we can apply Lemma 2.16 of \cite{Th1} to conclude that $J_0 = \overline{P_{\Gamma_{\phi}}(I)}$ is a
  non-trivial $\Gamma_{\phi}$-invariant ideal in $C(Y)$. Then
$$
J = \left\{ a \in C^*_r\left(\Gamma_{\phi}\right) : \
  P_{\Gamma_{\phi}}(a^*a) \in J_0 \right\}
$$
is a non-trivial gauge-invariant ideal by Theorem
\ref{gaugeideals}, and $J \cap C(Y) = J_0 \neq \{0\}$. Note that $J$ contains $I$ in this case. If $I \cap C(X) \neq \{0\}$ we set 
$$
J = \left\{ a \in C^*_r\left(\Gamma_{\phi}\right) : \
  P_{\Gamma_{\phi}}(a^*a) \in I \cap C(Y) \right\}
$$
which is a non-trivial ideal in $C^*_r\left(\Gamma_{\phi}\right)$ such
that $J \cap C(Y) = I \cap C(Y)$ by
Lemma 2.13 of \cite{Th1}. Since $J$ is gauge-invariant, this completes
the proof.
\end{proof}
\end{lemma}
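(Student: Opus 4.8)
The plan is to start from a given non-trivial ideal $I \subseteq C^*_r(\Gamma_\phi)$ and manufacture from it a gauge-invariant ideal with the required properties, distinguishing two cases according to whether or not $I$ already meets the diagonal $C(Y) = D_{\Gamma_\phi}$ non-trivially.

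Suppose first that $I \cap C(Y) \neq \{0\}$. For any ideal $I$ the intersection $I \cap D_{\Gamma_\phi}$ is automatically a $\Gamma_\phi$-invariant ideal of $D_{\Gamma_\phi}$ (as in Proposition~\ref{gaugeideals}), so I would set
$$
J \;=\; \bigl\{\, a \in C^*_r(\Gamma_\phi) : P_{\Gamma_\phi}(a^*a) \in I \cap C(Y) \,\bigr\}.
$$
By Proposition~\ref{gaugeideals} this is a gauge-invariant ideal, and by Lemma~2.13 of \cite{Th1} it satisfies $J \cap C(Y) = I \cap C(Y) \neq \{0\}$. It is non-trivial: since $I$ is a proper ideal of the unital algebra $C^*_r(\Gamma_\phi)$ it cannot contain $1 \in C(Y)$, so $I \cap C(Y)$ is a proper ideal of $C(Y)$ and hence $J \neq C^*_r(\Gamma_\phi)$, while $J \supseteq I \cap C(Y) \neq \{0\}$.

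The substantial case is $I \cap C(Y) = \{0\}$, and this is precisely where the hypothesis that not all points of $Y$ are pre-periodic enters: it allows one to invoke Lemma~2.16 of \cite{Th1}, which asserts that $J_0 := \overline{P_{\Gamma_\phi}(I)}$ is a non-trivial $\Gamma_\phi$-invariant ideal of $C(Y)$. Here the non-vanishing $J_0 \neq \{0\}$ is immediate from faithfulness of $P_{\Gamma_\phi}$ (any non-zero $a \in I$ has $P_{\Gamma_\phi}(a^*a) \neq 0$); the point of the hypothesis is that $J_0$ is a \emph{proper} ideal --- without it, e.g.\ when $\phi$ is the identity, the diagonal expectation can carry a proper ideal onto all of $C(Y)$. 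Granting Lemma~2.16, I would set $J = \{\, a : P_{\Gamma_\phi}(a^*a) \in J_0 \,\}$, which is gauge-invariant by Proposition~\ref{gaugeideals}, satisfies $J \cap C(Y) = J_0 \neq \{0\}$, and is proper because $J_0$ is; one may also note that $J \supseteq I$ in this case, since $P_{\Gamma_\phi}(a^*a) \in P_{\Gamma_\phi}(I) \subseteq J_0$ for $a \in I$.

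The only genuine obstacle is this last case: if Lemma~2.16 of \cite{Th1} were not at hand, the work would be in showing that under the hypothesis the diagonal expectation of a non-zero ideal is a non-trivial invariant ideal of $C(Y)$. Since that lemma is available, the proof reduces to routine bookkeeping with Proposition~\ref{gaugeideals}, the faithfulness of $P_{\Gamma_\phi}$, and the remark that a proper ideal of a unital $C^*$-algebra cannot contain the unit.
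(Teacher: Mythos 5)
Your proposal is correct and follows essentially the same route as the paper: the same two-case split on whether $I \cap C(Y)$ vanishes, with Lemma 2.16 of \cite{Th1} handling the case $I \cap C(Y) = \{0\}$ and Proposition \ref{gaugeideals} together with Lemma 2.13 of \cite{Th1} supplying the gauge-invariant ideal $J$ in both cases. The extra remarks you add (faithfulness of $P_{\Gamma_{\phi}}$ for $J_0 \neq \{0\}$ and properness via the unit) are fine but do not change the argument.
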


\begin{lemma}\label{minimal} Let $F \subseteq Y$ be a minimal closed
  non-empty totally $\phi$-invariant subset. Then either
\begin{enumerate}
\item[1)] $F \in \mathcal M_{Aper}$ and $\ker \pi_F$ is a maximal
  ideal, or
\item[2)] $F = \orb(x) = \left\{ \phi^n(x) : n  \in \mathbb N\right\}$, where $x \in \Per$.
\end{enumerate}
\begin{proof} It follows from the minimality of $F$ that $\overline{\orb(x)}=F$
  for all $x\in F$. We will show that 1) holds when $F$ does not
  contain an element of $\per$, and that 2) holds when it does. Assume first that $F$ does not contain any elements of $\per$. Then
  $F\in \mathcal M_{Aper}$. If  there is a proper ideal $I$ in $C_r^*(\Gamma_\phi)$ such that $\ker \pi_F
  \subsetneq I$, then $\pi_F(I)$ is a non-trivial ideal in
  $C^*_r\left(\Gamma_{\phi|_F}\right)$, and then it follows from
  Lemma \ref{intcx} that there is a non-trivial gauge-invariant ideal $J$ in
  $C^*_r\left(\Gamma_{\phi|_F}\right)$. By Theorem
  \ref{psi-invariant} $\rho(\pi_F^{-1}(J))$ is then a
  non-trivial closed totally $\phi$-invariant subset of
 $F$, contradicting the minimality of $F$. Thus 1) holds when $F$ does not
  contain an element from $\per$.

 Assume instead that there is an $x\in F\cap\Per$.
  Then $x$ is isolated in $\orb(x)$, and thus in
  $F$. It follows that $F=\orb(x)$, because if $y\in
  F\setminus\orb(x)$ we would have that $x\notin
  \overline{\orb(y)}=F$, which is absurd.
  Since $F$ is compact, $\orb(x)$ must be finite. Since $\phi$ is surjective we
  must then have that 
  $\orb(x) =  \left\{ \phi^n(x) : n  \in \mathbb N\right\}$.
  Thus 2) holds if $F$ contains
  an element from $\Per$.
\end{proof}
\end{lemma}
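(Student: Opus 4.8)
The statement to prove is \reflemma{minimal}: a minimal closed non-empty totally $\phi$-invariant subset $F\subseteq Y$ is either an aperiodic prime set whose ideal $\ker\pi_F$ is maximal, or else it is the finite forward orbit of a periodic isolated point. The plan is to split into two cases according to whether $F$ meets $\Per$ (the set of $\phi$-periodic points isolated in their own total orbit), and to handle the case $F\cap\Per=\emptyset$ by a contradiction argument using the earlier ideal-theoretic machinery, and the case $F\cap\Per\neq\emptyset$ by a direct topological/combinatorial argument.

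\textbf{Case 1: $F\cap\Per=\emptyset$.} First I would record that since $F$ is closed, non-empty, totally $\phi$-invariant and minimal among such sets, we have $\overline{\orb(x)}=F$ for every $x\in F$; in particular $F$ is prime (if $F\subseteq B\cup C$ with $B,C$ closed totally $\phi$-invariant, pick $x\in F$; then $x$ lies in $B$ or $C$, say $B$, and then $F=\overline{\orb(x)}\subseteq B$ since $B$ is closed and totally $\phi$-invariant), so $F\in\cip$, and since it contains no point of $\Per$, Proposition \ref{prop:free} gives $F\in\cipa=\mathcal M_{Aper}$. It remains to see $\ker\pi_F$ is maximal. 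Suppose not: there is a proper ideal $I$ with $\ker\pi_F\subsetneq I\subsetneq C^*_r(\Gamma_\phi)$. Then $\pi_F(I)$ is a non-trivial ideal in $C^*_r(\Gamma_{\phi|_F})=C^*_r(\Gamma_{\phi|_F})$, and because $F$ contains a point that is not pre-periodic (indeed, since $F=\overline{\orb(x)}$ is infinite by the remark following Proposition \ref{prop:free}, and $\phi|_F$ is topologically free, not all points of $F$ are pre-periodic), \reflemma{intcx} applied to $(F,\phi|_F)$ produces a non-trivial gauge-invariant ideal $J$ in $C^*_r(\Gamma_{\phi|_F})$. By Theorem \ref{psi-invariant} applied to $\phi|_F$, $\rho(\pi_F\inv(J))=\rho_F(J)$ is a non-empty proper closed totally $\phi$-invariant subset of $F$ — contradicting minimality of $F$. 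Hence $\ker\pi_F$ is maximal, and 1) holds.

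\textbf{Case 2: there is $x\in F\cap\Per$.} Since $x\in\Per$ it is isolated in $\orb(x)$; as $F=\overline{\orb(x)}$ and $x\in\orb(x)$, $x$ is isolated in $F$. I then claim $F=\orb(x)$: if some $y\in F\setminus\orb(x)$, then from $F=\overline{\orb(y)}$ we would need $x\in\overline{\orb(y)}$, but $x$ isolated in $F$ forces $x\in\orb(y)$, hence $y\in\orb(x)$ (using that membership in a total orbit is symmetric together with $x$ periodic), a contradiction. So $F=\orb(x)$. Being a closed subset of the compact space $Y$, and equal to $\orb(x)$ which — since $x$ is isolated in it and it is minimal — consists entirely of isolated points, $F$ is finite. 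Finally, surjectivity of $\phi$ together with finiteness and total $\phi$-invariance forces $\orb(x)$ to coincide with the forward orbit: $\phi$ restricted to the finite set $F$ is a surjection hence a bijection, so every point of $\orb(x)$ is hit by iterating $\phi$ forward from $x$, giving $F=\orb(x)=\{\phi^n(x):n\in\mathbb N\}$. Thus 2) holds.

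\textbf{Main obstacle.} The routine part is Case 2 (pure point-set topology plus the pigeonhole/bijection argument for finite sets). The delicate point is Case 1: one must be sure that the reduction $(F,\phi|_F)$ genuinely satisfies the hypotheses of \reflemma{intcx} — namely that $\phi|_F$ is a locally injective (here locally homeomorphic) surjection on a compact space, that not all points of $F$ are pre-periodic, and that $C^*_r(\Gamma_{\phi|_F})$ is exactly $C^*_r(\Gamma_\phi)/\ker\pi_F$ so that $\pi_F(I)$ being a proper non-trivial ideal there corresponds to a proper intermediate ideal upstairs. The first is immediate since $\phi$ restricts to a local homeomorphic surjection $F\to F$ (noted in the paragraph before Proposition \ref{prop:canonic}); the "not all pre-periodic" follows from $F\in\cipa$ being infinite with $\phi|_F$ topologically free (Proposition \ref{prop:free} and the remark after it); and the identification of the quotient is exactly Proposition \ref{prop:canonic}. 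Assembling these carefully — and invoking Theorem \ref{psi-invariant} for $\phi|_F$ to convert the gauge-invariant ideal $J$ back into a forbidden closed totally $\phi$-invariant proper subset of $F$ — is where the care is needed.
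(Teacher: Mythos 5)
Your proof is correct and follows essentially the same route as the paper's: the same case split on whether $F$ meets $\Per$, the same use of Lemma \ref{intcx}, Proposition \ref{prop:canonic} and Theorem \ref{psi-invariant} to contradict minimality in the aperiodic case, and the same isolated-point/finiteness argument in the periodic case. Your extra care in verifying the hypotheses of Lemma \ref{intcx} for $(F,\phi|_F)$ (non-pre-periodic points exist since $F\in\cipa$) is a detail the paper leaves implicit, but it is the same argument.
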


\begin{lemma}\label{maxideal1} Let $I$ be a maximal ideal in
  $C^*_r\left(\Gamma_{\phi}\right)$. Then either $I = \ker \pi_F$ for
  some minimal closed totally $\phi$-invariant subset $F \in \mathcal M_{Aper}$, or $I =
  P_{\orb(x),w}$ for some $w \in \mathbb T$ and some $x \in \Per$ such
  that $\orb(x) =
\left\{\phi^n(x): \ n \in \mathbb N\right\}$.
\begin{proof} Since $I$ is also primitive we know from Theorem
  \ref{primitive} that $I = \ker \pi_A$ for some $A \in \mathcal
  M_{Aper}$ or $I = P_{A,w}$ for some $A \in \mathcal M_{\per}$ and
  some $w \in \mathbb T$. In the first case it follows that $A$ must
  be a minimal closed totally $\phi$-invariant subset since $I$ is a maximal
  ideal. Assume then that $I = P_{A,w}$ for some $A \in \mathcal M_{\per}$ and
  some $w \in \mathbb T$. In the notation from the proof of Proposition
  \ref{prop:per}, observe that $\dot{P}_{A,w} \subseteq \langle p_x
  \rangle$ since $p_x\left(u_x-wp_x\right) = u_x -wp_x$. Note that $\dot{P}_{A,w} \neq \langle p_x
  \rangle$ because the latter of these ideals is gauge-invariant and the
  first is not. By maximality of $I$ this implies that $\langle p_x
  \rangle = C^*_r\left(\Gamma_{\phi|_A}\right)$. On the other hand,
  $\orb(x)$ is an open totally $\phi$-invariant subset of $A$ and $p_x \in
  C^*_r\left( \Gamma_{\phi|_{\orb(x)}}\right)$, so we see that $\langle p_x
  \rangle = C^*_r\left(\Gamma_{\phi|_A}\right) =
  C^*_r\left( \Gamma_{\phi|_{\orb(x)}}\right)$. This implies that
$$
C_0\left(\orb(x)\right) = C(A) \cap C^*_r\left(
  \Gamma_{\phi|_{\orb(x)}}\right) = C(A),
$$
and hence that $A = \orb(x)$. Compactness of $A$ implies that
$\orb(x)$ is finite and surjectivity of $\phi$ that $\orb(x) =
\left\{\phi^n(x): \ n \in \mathbb N\right\}$.
\end{proof}
\end{lemma}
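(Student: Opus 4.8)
The plan is to combine the classification of primitive ideals (Theorem~\ref{primitive}) with the maximality hypothesis; in spirit the argument is dual to Lemma~\ref{minimal}. First, a maximal ideal $I$ of $C^*_r\left(\Gamma_\phi\right)$ has a simple, hence primitive, quotient, so $I$ is itself primitive. By Theorem~\ref{primitive} we are then in one of two cases: $I=\ker\pi_A$ with $A\in\cipa$, or $I=P_{A,w}=\pi_A^{-1}\left(\dot P_{A,w}\right)$ with $A\in\cipp$ and $w\in\mathbb T$. I would treat these separately.

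Suppose $I=\ker\pi_A$ with $A\in\cipa$. It suffices to show that $A$ is minimal among the non-empty closed totally $\phi$-invariant subsets of $Y$. If not, pick such an $F$ with $\emptyset\neq F\subsetneq A$. Since $\rho\left(\ker\pi_F\right)=F\subseteq A$ by \eqref{rhoF}, Lemma~\ref{prop:ideal-gen} gives $\ker\pi_A\subseteq\ker\pi_F$; the inclusion is strict because $\rho$ separates these ideals ($F\neq A$), and $\ker\pi_F$ is proper because $\pi_F$ surjects onto the non-zero algebra $C^*_r\left(\Gamma_{\phi|_F}\right)$. Then $\ker\pi_A\subsetneq\ker\pi_F\subsetneq C^*_r\left(\Gamma_\phi\right)$ contradicts maximality of $I$, so $A$ is minimal and we take $F=A$.

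Suppose instead $I=P_{A,w}$ with $A\in\cipp$, $w\in\mathbb T$. Fix $x\in\Per$ with $\overline{\orb(x)}=A$ and minimal period $n$, so $p_x=1_{(x,0,x)}$ and $u_x=1_{(x,n,x)}$ lie in $C^*_r\left(\Gamma_{\phi|_A}\right)$. From $p_x\left(u_x-wp_x\right)=u_x-wp_x$ I get $\dot P_{A,w}\subseteq\langle p_x\rangle$, where $\langle p_x\rangle$ denotes the ideal of $C^*_r\left(\Gamma_{\phi|_A}\right)$ generated by $p_x$. This inclusion is strict: $\langle p_x\rangle$ is gauge invariant, being generated by the gauge-fixed projection $1_{(x,0,x)}$, whereas $\dot P_{A,w}$ is not — otherwise $\beta_\lambda\left(u_x-wp_x\right)-\left(u_x-wp_x\right)=(\lambda^n-1)u_x$ would force $u_x$, hence $p_x$, into $\dot P_{A,w}$, collapsing it onto $\langle p_x\rangle$ and making $P_{A,w}$ independent of $w$, against Proposition~\ref{prop:per}. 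Pulling back along the surjection $\pi_A$ gives $I=P_{A,w}\subsetneq\pi_A^{-1}\left(\langle p_x\rangle\right)$, so by maximality of $I$ the latter is all of $C^*_r\left(\Gamma_\phi\right)$, i.e. $\langle p_x\rangle=C^*_r\left(\Gamma_{\phi|_A}\right)$.

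Finally I would read off the orbit. Since $x$ is periodic and isolated in $\orb(x)$, hence isolated in $A$, and $\phi$ is a local homeomorphism, $\orb(x)$ is an open, totally $\phi$-invariant, discrete subset of $A$; thus $C^*_r\left(\Gamma_{\phi|_{\orb(x)}}\right)$ is an ideal of $C^*_r\left(\Gamma_{\phi|_A}\right)$ containing $p_x=1_{(x,0,x)}$, so $\langle p_x\rangle\subseteq C^*_r\left(\Gamma_{\phi|_{\orb(x)}}\right)$, and with the previous step $C^*_r\left(\Gamma_{\phi|_A}\right)=C^*_r\left(\Gamma_{\phi|_{\orb(x)}}\right)$. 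Intersecting with $C(A)$ and using the first identity in the proof of Proposition~\ref{prop:canonic} gives $C(A)=C_0\left(\orb(x)\right)$, forcing $\orb(x)=A$. Then $A=\orb(x)$ is compact and discrete, hence finite, and as $\phi$ maps $A$ onto $A$ it permutes this finite set, whence $\orb(x)=\left\{\phi^k(x):k\in\mathbb N\right\}$. The main obstacle I anticipate is the pair of facts that $\dot P_{A,w}\subsetneq\langle p_x\rangle$ (identifying $\langle p_x\rangle$ and the failure of gauge invariance of $\dot P_{A,w}$) and that $\orb(x)$ is genuinely open in $A$, so that $C^*_r\left(\Gamma_{\phi|_{\orb(x)}}\right)$ really is an ideal; the rest is bookkeeping with established correspondences.
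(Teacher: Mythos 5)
Your proof is correct and takes essentially the same route as the paper: reduce via Theorem \ref{primitive} to the two cases, and in the periodic case use $\dot{P}_{A,w} \subsetneq \langle p_x\rangle$ together with maximality to get $\langle p_x\rangle = C^*_r\left(\Gamma_{\phi|_A}\right) = C^*_r\left(\Gamma_{\phi|_{\orb(x)}}\right)$, then intersect with $C(A)$ to force $A = \orb(x)$, finite and equal to the forward orbit. The only loose point is your justification that $\dot{P}_{A,w}$ is not gauge invariant: the ``independence of $w$'' conclusion would need the assumption for every $w$, whereas it is cleaner to note that gauge invariance forces $p_x \in \dot{P}_{A,w}$, which already contradicts $\rho\left(P_{A,w}\right)=A$ from Proposition \ref{prop:per} (whose proof shows $p_x\in Q \Rightarrow \rho(Q)\neq A$) --- a fact the paper itself asserts without comment.
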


\begin{thm}\label{maximal2} The maximal ideals in
  $C^*_r\left(\Gamma_{\phi}\right)$ consist of the primitive ideals of
  the form $\ker \pi_F$ for some infinite minimal closed totally $\phi$-invariant subset $F
  \subseteq Y$ and the primitive ideals $P_{A,w}$ for some $w \in \T$, where $A = \orb(x) =
  \left\{\phi^n(x) : \ n \in \mathbb N\right\}$ for a $\phi$-periodic point
  $x \in Y$.
\begin{proof} This follows from the last two lemmas, after the
  observation that a primitive ideal $P_{A,w}$ of the form described
  in the statement is maximal.
\end{proof}
\end{thm}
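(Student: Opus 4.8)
The plan is to deduce Theorem~\ref{maximal2} by combining Lemma~\ref{maxideal1}, which gives the two candidate forms for a maximal ideal, with Theorem~\ref{primitive}, which classifies all primitive ideals, together with a direct verification that each candidate ideal is actually maximal. Since every maximal ideal is primitive (we are in the separable setting, so primitive ideals coincide with prime ideals), one inclusion is essentially contained in Lemma~\ref{maxideal1}: a maximal ideal is either $\ker\pi_F$ for a minimal closed totally $\phi$-invariant $F$, or $P_{\orb(x),w}$ where $\orb(x)=\{\phi^n(x):n\in\mathbb N\}$ for some $\phi$-periodic $x$. In the first case we must still argue that such an $F$ is \emph{infinite}: if $F$ were finite, then by surjectivity of $\phi$ it would consist of a single periodic orbit $\{\phi^n(x):n\in\mathbb N\}$ with $x\in\Per$, and then $F\in\cipp$ rather than $\cipa$; but Lemma~\ref{minimal} tells us that a minimal $F$ either lies in $\cipa$ (case 1) or is such a finite periodic orbit (case 2), and in case 2 the ideal $\ker\pi_F$ is \emph{not} maximal because it is strictly contained in $P_{\orb(x),w}$ for any $w$ (indeed $\dot P_{A,w}$ is a proper nonzero ideal of $C^*_r(\Gamma_{\phi|_F})$, being non-gauge-invariant and nonzero). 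Hence a maximal $\ker\pi_F$ forces $F\in\cipa$ and, by Proposition~\ref{prop:free}, $F$ is infinite.

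For the converse — the substance of the theorem — I would verify that each of the two listed families really consists of maximal ideals. First take $F$ an infinite minimal closed totally $\phi$-invariant subset. Suppose $\ker\pi_F\subsetneq I$ for some proper ideal $I$. Then $\pi_F(I)$ is a nonzero proper ideal of $C^*_r(\Gamma_{\phi|_F})$. Since $F$ is infinite and minimal, $\phi|_F$ is topologically free (Proposition~\ref{prop:free}, as $F\in\cipa$) and in particular not every point of $F$ is pre-periodic; so Lemma~\ref{intcx}, applied to the system $(F,\phi|_F)$, produces a nonzero gauge-invariant ideal $J$ in $C^*_r(\Gamma_{\phi|_F})$. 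By Theorem~\ref{psi-invariant} the set $\rho(\pi_F^{-1}(J))\subseteq F$ — or more directly $\rho(J)$ inside $F$ — is a nonempty proper closed totally $\phi$-invariant subset of $F$, contradicting minimality. This is exactly the argument already carried out inside the proof of Lemma~\ref{minimal}, so here I would simply invoke it.

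Second, take $A=\orb(x)=\{\phi^n(x):n\in\mathbb N\}$ finite with $x\in\Per$, and $w\in\mathbb T$; I claim $P_{A,w}=\pi_A^{-1}(\dot P_{A,w})$ is maximal. Equivalently $\dot P_{A,w}$ is a maximal ideal of $C^*_r(\Gamma_{\phi|_A})$. In the notation of the proof of Proposition~\ref{prop:per}, since $A=\orb(x)$ is a single finite periodic orbit, $p_x$ is a full projection: $\langle p_x\rangle=C^*_r(\Gamma_{\phi|_A})$ (this was shown in Lemma~\ref{maxideal1}). Hence $C^*_r(\Gamma_{\phi|_A})$ is Morita equivalent to $p_xC^*_r(\Gamma_{\phi|_A})p_x\cong C(\mathbb T)$, with $u_x$ corresponding to the generator, and under this correspondence $\dot P_{A,w}$ corresponds to the maximal ideal $\{f\in C(\mathbb T):f(w)=0\}$ of $C(\mathbb T)$. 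Since Morita equivalence gives a lattice isomorphism of ideals, $\dot P_{A,w}$ is maximal in $C^*_r(\Gamma_{\phi|_A})$, and therefore $P_{A,w}$ is maximal in $C^*_r(\Gamma_\phi)$.

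Putting these together: Lemma~\ref{maxideal1} shows every maximal ideal has one of the two stated forms and that in the first case $F$ must be infinite; the two verifications above show every ideal of either stated form is maximal. I do not anticipate a serious obstacle — the only point needing care is confirming the \emph{infiniteness} clause, i.e. ruling out finite minimal $F$ as carriers of maximal $\ker\pi_F$, which is handled by noting $\dot P_{\orb(x),w}$ is a proper nonzero (non-gauge-invariant) ideal sitting strictly above $\ker\pi_{\orb(x)}$; this is the same observation used in Lemma~\ref{maxideal1}. Everything else is bookkeeping with the primitive-ideal list and the Morita-equivalence picture for $C(\mathbb T)$.
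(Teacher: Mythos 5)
Your route is essentially the paper's: the paper's proof consists of citing Lemmas \ref{minimal} and \ref{maxideal1} together with the (there unproved) observation that $P_{A,w}$ is maximal when $A=\orb(x)=\{\phi^n(x):n\in\mathbb N\}$ for a periodic point $x$, and what you add is a proof of that observation via the corner $p_xC^*_r(\Gamma_{\phi|_A})p_x\simeq C(\mathbb T)$ from Proposition \ref{prop:per}. (Your separate argument for the infiniteness of $F$ is harmless but not needed: Lemma \ref{maxideal1} already places $F$ in $\cipa$, whose members are infinite by the remark following Proposition \ref{prop:free}.)

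The one point that must be repaired is the fullness of $p_x$. You justify $\langle p_x\rangle=C^*_r(\Gamma_{\phi|_A})$ by appealing to Lemma \ref{maxideal1}, but there that identity was \emph{deduced from the assumed maximality} of $P_{A,w}$ --- precisely what you are now trying to prove --- so as written this step is circular. The fact is nevertheless easy to verify directly when $A=\orb(x)$ is a finite cycle: with $v=1_{(x,i,\phi^i(x))}$ as in Lemma \ref{remark:ens} one has $v^*p_xv=p_{\phi^i(x)}\in\langle p_x\rangle$ for every $i$, and the sum of these projections over the orbit is the unit of $C^*_r(\Gamma_{\phi|_A})$; alternatively, $\rho\left(\langle p_x\rangle\right)$ is closed, totally $\phi$-invariant and omits $x$, hence is empty, so $C(A)\subseteq\langle p_x\rangle$. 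With fullness established, the Rieffel correspondence for the full corner is a lattice isomorphism of ideals sending $\dot{P}_{A,w}$ to the maximal ideal of $C(\mathbb T)$ at $w$ (as computed at the end of the proof of Proposition \ref{prop:per}), so $\dot{P}_{A,w}$ is maximal and hence so is $P_{A,w}=\pi_A^{-1}(\dot{P}_{A,w})$; the rest of your argument stands.
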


\begin{cor}\label{maximal3} Let $A$ be a simple quotient of
  $C^*_r\left(\Gamma_{\phi}\right)$. Assume $A$ is not finite dimensional. It follows that there is an infinite minimal closed totally $\phi$-invariant
  subset $F$ of $Y$ such that $A \simeq C^*_r\left(
    \Gamma_{\phi|_F}\right)$.
\end{cor}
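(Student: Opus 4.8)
The plan is to combine the classification of maximal ideals in Theorem~\ref{maximal2} with the observation that the two extra possibilities are eliminated by the finite-dimensionality hypothesis. Write $A = C^*_r\left(\Gamma_{\phi}\right)/I$ for a maximal ideal $I$. By Theorem~\ref{maximal2}, $I$ is either $\ker \pi_F$ for an infinite minimal closed totally $\phi$-invariant subset $F \subseteq Y$, or $I = P_{\orb(x),w}$ for some $w \in \T$ and some $\phi$-periodic point $x$ with $\orb(x) = \left\{\phi^n(x) : n \in \mathbb N\right\}$ finite. In the first case Proposition~\ref{prop:canonic} gives $A = C^*_r\left(\Gamma_{\phi}\right)/\ker\pi_F \simeq C^*_r\left(\Gamma_{\phi|_F}\right)$, which is exactly the desired conclusion, so it remains only to rule out the periodic case under the hypothesis that $A$ is not finite dimensional.

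For the periodic case, I would show directly that $C^*_r\left(\Gamma_{\phi}\right)/P_{\orb(x),w}$ is finite dimensional, contradicting the hypothesis. Since $P_{\orb(x),w} = \pi_{\orb(x)}^{-1}\left(\dot{P}_{\orb(x),w}\right)$, we have $A \simeq C^*_r\left(\Gamma_{\phi|_{\orb(x)}}\right)/\dot{P}_{\orb(x),w}$. Now $\orb(x)$ is a finite set, say of cardinality $N$, and it is totally $\phi$-invariant, so $\phi$ permutes it; since $\orb(x) = \left\{\phi^n(x): n \in \mathbb N\right\}$ with $x$ of minimal period $n$, the restriction $\phi|_{\orb(x)}$ is a homeomorphism of an $N$-point set, and one checks that $\Gamma_{\phi|_{\orb(x)}}$ is a groupoid with finite unit space and finite isotropy, so $C^*_r\left(\Gamma_{\phi|_{\orb(x)}}\right)$ is a finite direct sum of matrix algebras over $C(\T)$ — in fact, tracking through the structure, it is a full matrix algebra over the $C^*$-algebra $p_xC^*_r\left(\Gamma_{\phi|_{\orb(x)}}\right)p_x \simeq C(\T)$, as computed in the proof of Proposition~\ref{prop:per}. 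Quotienting by the ideal generated by $u_x - w p_x$ sets the canonical unitary generator of $C(\T)$ equal to the scalar $w$, so the corner becomes $\mathbb C$ and hence $A \simeq M_k(\mathbb C)$ for some $k$. This is finite dimensional, a contradiction.

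The main obstacle I anticipate is making the last computation clean: one must be careful that quotienting $\langle p_x \rangle$ (or the whole algebra $C^*_r\left(\Gamma_{\phi|_{\orb(x)}}\right)$, which equals $\langle p_x\rangle$ here by the argument in the proof of Lemma~\ref{maxideal1}) by $\dot{P}_{\orb(x),w}$ corresponds under the Morita equivalence $\langle p_x\rangle \sim p_x C^*_r\left(\Gamma_{\phi|_{\orb(x)}}\right)p_x$ to quotienting $C(\T)$ by the maximal ideal of functions vanishing at $w$, and that Morita equivalence preserves the property of being finite dimensional only after one knows the corner is unital with the big algebra a finite matrix amplification of it — which is precisely the content of the isomorphism $p_xC^*_r\left(\Gamma_{\phi|_{\orb(x)}}\right)p_x \simeq C(\T)$ together with $\langle p_x\rangle$ being all of $C^*_r\left(\Gamma_{\phi|_{\orb(x)}}\right)$. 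Alternatively, and perhaps more transparently, since $\orb(x)$ is finite one can describe $C^*_r\left(\Gamma_{\phi|_{\orb(x)}}\right)$ explicitly as $M_N(\mathbb C) \otimes C(\T)$ (the $\mathbb Z$ in the groupoid contributes the $C(\T)$ factor and the $N$ points contribute $M_N$), whence the quotient by $u_x - wp_x$ is visibly $M_N(\mathbb C)$. Either route gives the needed contradiction and completes the proof.
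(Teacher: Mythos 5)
Your argument is correct and follows exactly the route the paper intends (the corollary is stated without proof as a consequence of Theorem~\ref{maximal2} and Proposition~\ref{prop:canonic}): a simple quotient corresponds to a maximal ideal, the case $I=\ker\pi_F$ gives the conclusion directly, and the periodic case is excluded because $C^*_r\left(\Gamma_{\phi|_{\orb(x)}}\right)\simeq M_N\bigl(C(\T)\bigr)$ so that the quotient by $\dot{P}_{\orb(x),w}$ is the matrix algebra $M_N(\mathbb C)$, contradicting infinite dimensionality. Your finite-dimensionality computation is consistent with the corner isomorphism $p_xC^*_r\left(\Gamma_{\phi|_{\orb(x)}}\right)p_x\simeq C(\T)$ established in the proof of Proposition~\ref{prop:per}, so nothing is missing.
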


To make more detailed conclusions about the simple quotients we
need to restrict to the case where $Y$ is of finite covering dimension so that
the result of \cite{Th3} applies. For this reason we prove first that
finite dimensionality of $Y$ follows from finite dimensionality of $X$.

\section{On the dimension of $Y$}

Let $\Dim X$ and $\Dim Y$ denote the covering dimensions of $X$ and
$Y$, respectively.
The purpose with this section is to establish

 \begin{prop}\label{dim!!!} $\Dim Y \leq \Dim X$.\end{prop}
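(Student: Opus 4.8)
Recall from Section 2 that $Y$ is the Gelfand spectrum of the abelian $C^*$-algebra $D_{\Gamma_{\varphi}}$, and that by Lemma \ref{AA1} we have $D_{R_{\varphi}} = \overline{\bigcup_n D_{R(\varphi^n)}}$, with $D_{R_{\varphi}} \subseteq D_{\Gamma_{\varphi}}$. The plan is to realize $Y$ as an inverse limit of spectra of the building blocks $D_{R(\varphi^n)}$ (or more precisely of the corresponding pieces of $D_{\Gamma_{\varphi}}$) and to show that each of those spectra has covering dimension at most $\Dim X$; then finite-dimensionality passes to the inverse limit. The key classical facts I would invoke are: (a) the covering dimension of an inverse limit of compact Hausdorff spaces is bounded by the supremum of the dimensions of the terms (this is a standard theorem of Engelking/Nagami on inverse limits); and (b) if $Z \to X$ is a continuous finite-to-one (indeed, locally injective) surjection between compact metric spaces, or more to the point if $C(Z)$ embeds unitally into a $C^*$-algebra built from finitely many copies of $C(X)$ glued along subsets, then $\Dim Z \leq \Dim X$.

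The heart of the matter is to understand the spectrum of $D_{R(\varphi^n)}$. Since $R(\varphi^n) = \{(x,y) : \varphi^n(x) = \varphi^n(y)\}$ is a semi-étale equivalence relation with uniformly bounded (by local injectivity, locally constant) fibres, $C^*_r(R(\varphi^n))$ is locally of the form $p\,C(U, M_k)\,p$ over pieces $U$ of $X$, and its diagonal $D_{R(\varphi^n)}$ is maximal abelian in it. The spectrum of $D_{R(\varphi^n)}$ should be identifiable with a space that maps continuously and finitely-to-one onto $X$ — essentially $X$ with each point $x$ "split" into the number of summands of $M_k$ appearing over $x$, which is controlled by $m(x) = \#\varphi^{-1}(\varphi(x))$ iterated. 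The crucial dimension-theoretic input is then: a compact Hausdorff space admitting a continuous, finite-to-one, closed surjection onto $X$ has covering dimension $\le \Dim X$. This follows from the Hurewicz-type theorem that finite-to-one closed maps do not raise dimension (indeed, a closed map with fibres of cardinality $\le k$ raises dimension by at most $k-1$ in general, but for zero-dimensional — here finite — fibres it does not raise dimension at all). So I would first prove $\Dim \widehat{D_{R(\varphi^n)}} \le \Dim X$ for every $n$.

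Next I would assemble these into the statement for $Y$. The inclusions $D_{R(\varphi^n)} \hookrightarrow D_{R(\varphi^{n+1})}$ dualize to surjections of spectra $\widehat{D_{R(\varphi^{n+1})}} \twoheadrightarrow \widehat{D_{R(\varphi^n)}}$, exhibiting $\widehat{D_{R_{\varphi}}}$ as the inverse limit, so $\Dim \widehat{D_{R_{\varphi}}} \le \sup_n \Dim \widehat{D_{R(\varphi^n)}} \le \Dim X$ by fact (a). The remaining gap is between $D_{R_{\varphi}}$ and $D_{\Gamma_{\varphi}}$: we need $\Dim Y = \Dim \widehat{D_{\Gamma_{\varphi}}} \le \Dim \widehat{D_{R_{\varphi}}}$. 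For this I would use the crossed-product structure $C^*_r(\Gamma_{\varphi}) = C^*_r(R_{\varphi}) \times_{\widehat{\varphi}} \mathbb N$ recalled in Section 4: the isometry $V_{\varphi}$ conjugates and one has $D_{\Gamma_{\varphi}} = \overline{\bigcup_k V_{\varphi}^{*k} D_{R_{\varphi}} V_{\varphi}^{k}}$ (this should follow from $C^*_r(\Gamma_{\varphi})^{\mathbb T} = \overline{\bigcup_k V_{\varphi}^{*k} C^*_r(R_{\varphi}) V_{\varphi}^k}$ of Lemma \ref{kalgs} together with $D_{\Gamma_{\varphi}} \subseteq C^*_r(\Gamma_{\varphi})^{\mathbb T}$ and the fact that the conditional expectation onto the diagonal commutes with the maps). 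Each $V_{\varphi}^{*k} D_{R_{\varphi}} V_{\varphi}^{k}$ is a quotient (corner) of $D_{R_{\varphi}}$ — indeed $a \mapsto V_{\varphi}^{*k} a V_{\varphi}^{k}$ is a surjective $*$-homomorphism onto it when restricted appropriately — so its spectrum embeds as a closed subset of $\widehat{D_{R_{\varphi}}}$, hence has dimension $\le \Dim X$; and $Y$ is again the inverse limit of these, giving $\Dim Y \le \Dim X$.

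\textbf{Main obstacle.} The step I expect to be most delicate is the precise identification of the spectra of the pieces and the verification that the connecting maps are indeed finite-to-one closed surjections — the bookkeeping for $p\,C(U,M_k)\,p$ corners and the way the "multiplicity" of $X$-points is governed by the functions $m, m\circ\varphi, \dots$ requires care, and one must be sure the local-triviality fails only in a way that still keeps fibres finite. The dimension-theoretic lemma (finite-to-one closed maps do not raise covering dimension, and inverse limits do not raise it either) is standard and can be cited, but stitching it correctly across the two inductive limits — first $n \to \infty$ inside $R_{\varphi}$, then $k \to \infty$ to pass to $\Gamma_{\varphi}$ — is where the real work lies.
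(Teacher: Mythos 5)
Your two classical ingredients (Hurewicz's theorem that a closed surjection with zero-dimensional -- e.g.\ finite -- fibres does not raise the dimension of the domain, and the bound for inverse limits) are exactly the ones the paper uses, and the overall shape -- exhibit $Y$ as an inverse limit of compact spaces mapping finite-to-one onto $X$ -- is also the paper's. But your specific reduction breaks at the passage from $D_{R_{\varphi}}$ to $D_{\Gamma_{\varphi}}$. The identity $D_{\Gamma_{\varphi}} = \overline{\bigcup_k V_{\varphi}^{*k} D_{R_{\varphi}} V_{\varphi}^{k}}$ does not follow from Lemma \ref{kalgs}, because the conditional expectation does \emph{not} intertwine conjugation by $V_{\varphi}^k$: as the paper's formula (\ref{expresssion}) shows, $P_{\Gamma_{\varphi}}\bigl({V_{\varphi}^*}^{k} f V_{\varphi}^{k}\bigr)(x) = \sum_{z,z' \in \varphi^{-k}(x)} f(z,z') \prod_{j} m(\varphi^j(z))^{-\frac12} m(\varphi^j(z'))^{-\frac12}$, which involves the off-diagonal values $f(z,z')$ with $z \neq z'$, whereas ${V_{\varphi}^*}^{k} P_{R_{\varphi}}(f) V_{\varphi}^{k}$ sees only diagonal values. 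So $P_{\Gamma_{\varphi}}\bigl({V_{\varphi}^*}^{k} C^*_r(R_{\varphi}) V_{\varphi}^{k}\bigr)$ is genuinely larger than ${V_{\varphi}^*}^{k} D_{R_{\varphi}} V_{\varphi}^{k}$, and your claimed description of $D_{\Gamma_{\varphi}}$ is unjustified. Relatedly, $a \mapsto {V_{\varphi}^*}^{k} a V_{\varphi}^{k}$ is not a $*$-homomorphism on $D_{R_{\varphi}}$ and ${V_{\varphi}^*}^{k} D_{R_{\varphi}} V_{\varphi}^{k}$ need not even be a subalgebra: $V_{\varphi}^k{V_{\varphi}^*}^k$ is supported on $R(\varphi^k)$, not on the diagonal, so the trick that makes Lemma \ref{kalgs} work (namely that $C^*_r(R_{\varphi})$ absorbs $V_{\varphi}^k{V_{\varphi}^*}^k$) is unavailable for the diagonal. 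Hence your inverse system is not the right one, its terms are not quotients of $D_{R_{\varphi}}$, and the ``closed subset of $\widehat{D_{R_{\varphi}}}$'' step collapses.

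Second, even with the correct building blocks -- the paper takes $D_n$ to be the $C^*$-algebra generated by $P_{\Gamma_{\varphi}}\bigl({V_{\varphi}^*}^{k_n} C^*_r(R(\varphi^{l_n})) V_{\varphi}^{k_n}\bigr)$, observes $C(X)\subseteq D_n$, and realizes $Y$ as the projective limit of the spectra $Y_n$ -- the substance of the proof is precisely the step you defer as ``bookkeeping'': proving that the surjection $\pi_n : Y_n \to X$ has finite fibres. This is not a routine consequence of subhomogeneity of $C^*_r(R(\varphi^n))$; the paper proves it by approximating a character $y \in \pi_n^{-1}(x)$ by point evaluations $\iota_{x_l}$, choosing local inverse branches of $\varphi^{k_n}$ and $\varphi^{l_n}$ near $x$ (Lemma 3.6 of \cite{Th1}) together with the level sets $A_j = m^{-1}(j)$, and showing that the value of $y$ on the generators is determined by finitely many combinatorial data, bounding $\#\pi_n^{-1}(x)$ by $2^d 2^{d'} T^{k_n}$. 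Your sketch of the analogous finiteness for the spectra of $D_{R(\varphi^n)}$ is plausible but unproved, and in any case those are not the spaces whose limit is $Y$. So, while the dimension-theoretic frame is right, the proposal has genuine gaps both in the decomposition of $D_{\Gamma_{\varphi}}$ and in the finiteness-of-fibres argument that carries the real weight of the proposition.
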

\begin{proof}By definition $Y$ is the Gelfand spectrum of
  $D_{\Gamma_{\varphi}}$. Since the conditional expectation
  $P_{\Gamma_{\varphi}} : C^*_r\left(\Gamma_{\varphi}\right) \to
  D_{\Gamma_{\varphi}}$ is invariant under the gauge action, in the
  sense that $P_{\Gamma_{\varphi}} \circ \beta_{\lambda} =
  P_{\Gamma_{\varphi}}$ for all $\lambda$, it follows that
\begin{equation*}\label{D17}
D_{\Gamma_{\varphi}} =
P_{\Gamma_{\varphi}}\left(C^*_r\left(\Gamma_{\varphi}\right)^{\mathbb
    T}\right) .
\end{equation*}
To make use of this description of $D_{\Gamma_{\varphi}}$ we need a refined version of
(\ref{bkr}). Note first that it follows from (4.4) and (4.5) of
\cite{Th1} that
$V_{\varphi}C^*_r\left(R\left(\varphi^l\right)\right)V_{\varphi}^*
\subseteq C^*_r\left(R\left(\varphi^{l+1}\right)\right)$ for all $l \in
  \mathbb N$. Consequently
$$
{V_{\varphi}^*}^k C^*_r\left(R\left(\varphi^l\right)\right)
V_{\varphi}^k = {V_{\varphi}^*}^{k+1}V_{\varphi} C^*_r\left(R\left(\varphi^l\right)\right)V_{\varphi}^*
V_{\varphi}^{k+1} \subseteq {V_{\varphi}^*}^{k+1} C^*_r\left(R\left(\varphi^{l+1}\right)\right)
V_{\varphi}^{k+1}
$$
for all $k,l  \in \mathbb N$. It follows therefore from (\ref{crux})
and (\ref{bkr}) that there are sequences $\{k_n\}$ and
$\left\{l_n\right\}$ in $\mathbb N$ such that $l_n \geq k_n$,
\begin{equation}\label{D17}
{V_{\varphi}^*}^{k_n}
C^*_r\left(R\left(\varphi^{l_n}\right)\right)V_{\varphi}^{k_n}
\subseteq {V_{\varphi}^*}^{k_{n+1}} C^*_r\left(R\left(\varphi^{l_{n+1}}\right)\right)V_{\varphi}^{k_{n+1}}
\end{equation}
and
\begin{equation}\label{D1}
C^*_r\left(\Gamma_{\varphi}\right)^{\mathbb
    T} = \overline{ \bigcup_n {V_{\varphi}^*}^{k_n}
C^*_r\left(R\left(\varphi^{l_n}\right)\right)V_{\varphi}^{k_n}}; 
\end{equation}
we can for example use $k_n=n$ and $l_n=2n$.

Let $D_n$ denote the $C^*$-subalgebra of
  $D_{\Gamma_{\varphi}}$ generated by
$$
P_{\Gamma_{\varphi}}\left({V^*_{\varphi}}^{k_n}
  C^*_r\left(R\left(\varphi^{l_n} \right)\right)V_{\varphi}^{k_n}\right) 
$$
and let $Y_n$ be the character space of $D_n$.
Note that $C(X) \subseteq D_n$ since
$V_{\varphi}^{k_n}g{V_{\varphi}^*}^{k_n} \in
C^*_r\left(R\left(\varphi^{l_n}\right)\right)$ and $g = P_{\Gamma_{\varphi}}\left( {V^*_{\varphi}}^{k_n} V_{\varphi}^{k_n}g 
  {V_{\varphi}^*}^{k_n} V^{k_n}_{\varphi} \right)$ when $g \in C(X)$. There is therefore a continuous surjection 
$$
\pi_n :
Y_n \to X
$$ 
defined such that $g\left(\pi_n(y)\right) = y(g), \ g \in
C(X)$. We claim that $\# \pi_n^{-1}(x) < \infty$ for all $x \in X$. To
show this note that by
definition $D_n$ is generated as a $C^*$-algebra by functions of the
form
\begin{equation}\label{expresssion}
\begin{split}
&x \mapsto P_{\Gamma_{\varphi}}\left( {V^*_{\varphi}}^{k_n}
  fV_{\varphi}^{k_n}\right)(x)  = \sum_{z,z' \in \varphi^{-k_n}(x)} 
 f(z,z') \prod_{j=0}^{k_n-1} m(\varphi^j(z))^{-\frac{1}{2}} m(\varphi^j(z'))^{-\frac{1}{2}}
\end{split}
\end{equation}
for some $f \in C^*_r\left(R\left(\varphi^{l_n}\right)\right)$. In fact, since $\alg^* R\left(\varphi^{l_n}\right)$ is dense in
$C^*_r\left(R\left(\varphi^{l_n}\right)\right)$, already functions of
the form (\ref{expresssion})
with
\begin{equation}\label{expression7}
f = f_1 \star f_2 \star \dots \star
f_N, 
\end{equation}
for some $f_i \in C\left(R\left(\varphi^{l_n}\right)\right), i =
1,2,\dots, N$, will generate $D_n$.


Fix $x \in X$ and consider an element $y \in \pi_n^{-1}(x)$. Every $x'
\in X$ defines a character $\iota_{x'}$ of $D_n$ by
evaluation, viz. $\iota_{x'}(h) = h(x')$, and $\left\{\iota_{x'} : x'
  \in X \right\}$ is dense in $Y_n$ because the implication
$$
h \in D_n, \ h(x') = 0
\ \forall x' \in X \ \Rightarrow \ h = 0
$$
holds. 
In particular, there is a
sequence $\left\{x_l\right\}$ in $X$ such that $\lim_{l \to \infty}
\iota_{x_l} = y$ in $Y_n$. Recall now from Lemma 3.6 of \cite{Th1} that there is an
  open neighbourhood $U$ of $x$ and open sets $V_j, j=1,2, \dots,d$, where $d = \# \varphi^{-k_n}(x)$, in
  $X$ such
  that
\begin{enumerate}
\item[1)] $\varphi^{-k_n}\left(\overline{U}\right) \subseteq V_1 \cup V_2
  \cup \dots \cup V_d$,
\item[2)] $\overline{V_i} \cap \overline{V_j} = \emptyset, \ i \neq
  j$, and
\item[3)] $\varphi^{k_n}$ is injective on $\overline{V_j}$ for
  each $j$.
\end{enumerate} 
Since $\lim_{l \to \infty} x_l = x$ in $X$ we can assume that $x_l \in
U$ for all $l$. For each $l$, set
$$
F_l = \left\{ j : \ \varphi^{-k_n}(x_l) \cap V_j \neq \emptyset
\right\} \subseteq \left\{1,2,\dots,d\right\}.
$$
Note that there is a subset $F \subseteq \left\{1,2,\dots,d\right\}$
such that $F_l = F$ for infinitely many $l$. Passing to a subsequence
we can therefore assume that $F_l = F$ for all $l$. For each $k \in F$
we define a
continuous map $\lambda_k :
\varphi^{k_n}\left(\overline{V_k}\right) \to \overline{V_k}$ such that
$\varphi^{k_n} \circ \lambda_k (z) = z$. Set $T = \max_{z
  \in X} \# \varphi^{-1}(z)$. For each $j \in \left\{1,2, \dots,
  T\right\}$, set
$$
A_j = \left\{ z \in X : \# \varphi^{-1}\left(\varphi(z)\right)
  = j \right\} = m^{-1}(j). 
$$
For each $l$ and each $k \in F$ there is a unique tuple $\left(j_0(k),j_1(k),
\dots, j_{k_n-1}(k)\right) \in \left\{1,2, \dots, T\right\}^{k_n}$ such
that
$$
\varphi^{-k_n}(x_l) \cap V_k \cap A_{j_0(k)} \cap
\varphi^{-1}\left(A_{j_1(k)}\right) \cap \varphi^{-2}\left(A_{j_2(k)}\right)
\cap \dots \cap \varphi^{-k_n+1 }\left(A_{j_{k_n-1}(k)}\right) \neq \emptyset
.
$$ 
Since there are only finitely many choices we can arrange that the
same tuples, $\left(j_0(k),j_1(k),
\dots, j_{k_n-1}(k)\right), k  \in F$, work for all $l$. Then
\begin{equation}\label{expresssion2}
\begin{split}
&\iota_{x_l}\left(P_{\Gamma_{\varphi}}\left( {V^*_{\varphi}}^{k_n}
  fV_{\varphi}^{k_n}\right)\right)  = \sum_{k,k' \in F} 
 f\left(\lambda_k(x_l),\lambda_{k'}(x_l)\right) \prod_{i=0}^{k_n-1} j_i(k)^{-\frac{1}{2}} j_i(k')^{-\frac{1}{2}}
\end{split}
\end{equation}
for all $f \in C^*_r\left(R\left(\varphi^{l_n}\right)\right)$ and all
$l$. 

There is an open neighbourhood $U'$ of $\varphi^{l_n-k_n}(x)$ and open sets $V'_j, j=1,2, \dots,d'$, where $d' = \# \varphi^{-l_n}\left(\varphi^{l_n-k_n}(x)\right)$, in
  $X$ such
  that
\begin{enumerate}
\item[1')] $\varphi^{-l_n}\left(\overline{U'}\right) \subseteq V'_1 \cup V'_2
  \cup \dots \cup V'_{d'}$,
\item[2')] $\overline{V'_i} \cap \overline{V'_j} = \emptyset, \ i \neq
  j$, and
\item[3')] $\varphi^{l_n}$ is injective on $\overline{V'_j}$ for
  each $j$.
\end{enumerate} 
Since $\lim_{l \to \infty} \varphi^{l_n-k_n}(x_l) = \varphi^{l_n
  -k_n}(x)$ we can assume that $\varphi^{l_n -k_n}(x_l) \in U'$ for
all $l$. By an argument identical to the way we found $F$ above we can
now find a subset $F' \subseteq \{1,2,\dots, d'\}$ such that
$$
F' = \left\{ j : \varphi^{-l_n}\left(\varphi^{l_n-k_n}(x_l)\right)
  \cap V'_j \neq \emptyset \right\}
$$
for all $l$. For $i \in F'$ we define a continuous map $\mu'_i :
\varphi^{l_n}\left(\overline{V'_i}\right) \to \overline{V'_i}$ such
that $\mu'_i \circ \varphi^{l_n}(z) = z$ when $z \in
\overline{V'_i}$. Set
$$
\mu_i = \mu'_i \circ \varphi^{l_n-k_n} 
$$  
on $\varphi^{-(l_n-k_n)}\left(\varphi^{l_n}\left(\overline{V'_i}\right)\right)$.
Assuming that $f$ has the form (\ref{expression7}) we find now that
\begin{equation}\label{yrk}
\begin{split}
&f\left(\lambda_k(x_l),\lambda_{k'}(x_l)\right) = \\
& \sum_{i_1,i_2, \dots,
  i_{N-1} \in F'} f_1\left(\lambda_k(x_l),
  \mu_{i_1}(x_l)\right)f_2\left(\mu_{i_1}(x_l),
  \mu_{i_2}(x_l)\right)\dots \dots
f_N\left(\mu_{i_{N-1}}(x_l), \lambda_{k'}(x_l)\right)  
\end{split}
\end{equation}
for all $k,k' \in F$. By combining (\ref{yrk}) with
(\ref{expresssion2}) we find by letting $l$ tend to infinity that 
$$
y\left(P_{\Gamma_{\varphi}}\left( {V^*_{\varphi}}^{k_n}
  fV_{\varphi}^{k_n}\right)\right)  = \sum_{k,k' \in F}
H_{k,k'}(x)\prod_{i=0}^{k_n-1} j_i(k)^{-\frac{1}{2}}
j_i(k')^{-\frac{1}{2}},
$$
where 
$$
H_{k,k'}(x) = \sum_{i_1,i_2, \dots,
  i_{N-1} \in F'} f_1\left(\lambda_k(x),
  \mu_{i_1}(x)\right)f_2\left(\mu_{i_1}(x),
  \mu_{i_2}(x)\right)\dots \dots
f_N\left(\mu_{i_{N-1}}(x), \lambda_{k'}(x)\right) .
$$
Since this expression only depends on $F,F'$ and the tuples 
$$
\left(j_0(k),j_1(k),
\dots, j_{k_n-1}(k)\right), k  \in F,
$$ 
it follows that the number of possible values of an element from $\pi_n^{-1}(x)$
on the generators of the form (\ref{expresssion}) does not exceed
$2^d2^{d'}T^{k_n}$, proving that
$\# \pi_n^{-1}(x) < \infty$ as claimed.

We can then apply Theorem 4.3.6 on page 281 of \cite{En} to conclude
that $\Dim
Y_n \leq \Dim X$. Note that $D_n \subseteq D_{n+1}$ and  
$D_{\Gamma_{\varphi}} =
\overline{\bigcup_n D_n}$ by (\ref{D17}) and
(\ref{D1}). Hence $Y$ is the projective
limit of the sequence $Y_1 \gets Y_2 \gets Y_3 \gets \dots $. Since $\Dim Y_n \leq \Dim X$ for all $n$ we conclude
now from Theorem 1.13.4 in \cite{En} that $\Dim Y \leq \Dim X$. 
\end{proof}

\section{The simple quotients}

Following \cite{DS} we say that $\phi$ is \emph{strongly transitive}
when for any non-empty open subset $U
\subseteq Y$ there is an $n \in \mathbb N$ such that $Y =
\bigcup_{j=0}^n \phi^j(U)$, cf. \cite{DS}. By Proposition 4.3 of
\cite{DS}, $C^*_r\left(\Gamma_{\phi}\right)$ is simple if and only if
$Y$ is infinite and 
$\phi$ is strongly transitive.

\begin{lemma}\label{hmzero} Assume that $\phi$ is strongly transitive
  but not injective. It
  follows that 
$$
\lim_{k \to \infty} \frac{1}{k} \log \left(\inf_{x \in Y}\# \phi^{-k}(x)\right) >
0.
$$ 

\begin{proof} Note that 
$U = \left\{ x \in Y : \ \# \phi^{-1}(x) \geq 2 \right\}$
is open and not empty since $\phi$ is a local homeomorphism and not
injective. It follows that there is an $m \in \mathbb N$ such that 
\begin{equation}\label{D18}
\bigcup_{j=0}^{m-1} \phi^j(U) = Y 
\end{equation}
because $\phi$ is strongly transitive. We claim that
\begin{equation}\label{est1}
\inf_{z \in Y} \# \phi^{-k}(z)  \geq 2^{\left[\frac{k}{m}\right]}
\end{equation}
for all $k \in \mathbb N$ where $\left[\frac{k}{m}\right]$ denotes the
integer part of $\frac{k}{m}$. This follows by induction: Assume that it true for
all $k' < k$. Consider any $z \in Y$. If $k < m$ there is nothing to
prove so assume that $k \geq m$. By (\ref{D18}) we can then write $z =
\phi^j(z_1) = \phi^j(z_2)$ for some $j \in   \left\{1,2,\dots,
  m\right\}$ and some $z_1 \neq z_2$. It follows that 
$$
\# \phi^{-k}(z) \geq \# \phi^{-(k-j)}(z_1) + \# \phi^{-(k-j)}(z_2)
\geq 2 \cdot 2^{\left[\frac{k-j}{m}\right]} \geq
2^{\left[\frac{k}{m}\right]} .
$$ 
It follows from (\ref{est1}) that $\lim_{k \to \infty} \frac{1}{k}
\log \left(\inf_{x \in Y}\# \phi^{-k}(x)\right) \geq \frac{1}{m} \log 2$.
\end{proof}
\end{lemma}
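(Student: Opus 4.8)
The plan is to exploit strong transitivity to spread out a single point of non-injectivity over all of $Y$ in a bounded number of steps, and then to iterate this to get exponential growth of the fibre cardinalities. First I would note that since $\phi : Y \to Y$ is a local homeomorphism, the set
$$
U = \left\{ x \in Y : \ \# \phi^{-1}(x) \geq 2 \right\}
$$
is open, and it is non-empty precisely because $\phi$ is not injective. Strong transitivity then supplies an $m \in \mathbb N$ with $Y = \bigcup_{j=0}^{m-1} \phi^j(U)$.

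Next I would prove by induction on $k$ that $\inf_{z \in Y} \# \phi^{-k}(z) \geq 2^{[k/m]}$. The case $k < m$ is trivial. For $k \geq m$, given any $z \in Y$, use the covering $Y = \bigcup_{j=0}^{m-1}\phi^j(U)$ to write $z = \phi^j(w)$ for some $j \in \{1,2,\dots,m\}$ (one may take $j \ge 1$) and some $w \in U$; since $w \in U$ there are two distinct points $z_1 \neq z_2$ in $\phi^{-1}(w)$, hence $z = \phi^{j}(z_1) = \phi^{j}(z_2)$. The sets $\phi^{-(k-j)}(z_1)$ and $\phi^{-(k-j)}(z_2)$ are disjoint subsets of $\phi^{-k}(z)$, so the induction hypothesis gives
$$
\# \phi^{-k}(z) \ \geq \ \# \phi^{-(k-j)}(z_1) + \# \phi^{-(k-j)}(z_2) \ \geq \ 2 \cdot 2^{\left[\frac{k-j}{m}\right]} \ \geq \ 2^{\left[\frac{k}{m}\right]} ,
$$
where the last inequality uses $1 \le j \le m$. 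Taking the infimum over $z$ completes the induction.

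Finally, from $\inf_{x \in Y}\# \phi^{-k}(x) \geq 2^{[k/m]}$ I would conclude
$$
\liminf_{k \to \infty} \frac{1}{k} \log \left(\inf_{x \in Y}\# \phi^{-k}(x)\right) \ \geq \ \lim_{k \to \infty} \frac{[k/m]}{k} \log 2 \ = \ \frac{1}{m}\log 2 \ > \ 0 ,
$$
which is the assertion. The only point requiring a little care is the disjointness step: one must make sure that the two chosen preimage points $z_1, z_2$ are genuinely distinct and that their $\phi^{-(k-j)}$-fibres do not overlap, so that the two cardinalities may be added; both are immediate once $z_1 \neq z_2$. I do not expect any serious obstacle — the argument is a clean induction, with the use that $\phi$ is a local homeomorphism entering only to guarantee openness of $U$.
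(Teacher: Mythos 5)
Your proposal is correct and follows essentially the same route as the paper's proof: openness and non-emptiness of $U$, strong transitivity to get the cover $Y=\bigcup_{j=0}^{m-1}\phi^j(U)$, the inductive estimate $\inf_{z\in Y}\#\phi^{-k}(z)\geq 2^{[k/m]}$ via disjointness of the fibres over two distinct preimages, and the resulting lower bound $\frac{1}{m}\log 2$. The only blemish is a harmless index slip: if $z=\phi^{j}(w)$ with $w\in U$, $j\in\{0,\dots,m-1\}$, and $z_1\neq z_2$ lie in $\phi^{-1}(w)$, then $z=\phi^{j+1}(z_1)=\phi^{j+1}(z_2)$, and $j+1$ still lies in $\{1,\dots,m\}$, so the estimate goes through exactly as you wrote it.
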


Let $M_l$ denote the $C^*$-algebra of complex $l \times l$-matrices.
In the following a \emph{homogeneous $C^*$-algebra} will be a
$C^*$-algebra isomorphic to a $C^*$-algebra of the form
$eC(X,M_l)e$ where $X$ is a compact metric space and $e$ is a projection in
$C(X,M_l)$ such that $e(x) \neq 0$ for all $x \in X$.

\begin{defn}\label{slowdim} A unital $C^*$-algebra $A$ is an \emph{AH-algebra} when
  there is an increasing sequence $A_1 \subseteq A_2 \subseteq A_3
  \subseteq \dots$ of unital $C^*$-subalgebras of $A$ such that $A =
  \overline{\bigcup_n A_n}$ and each $A_n$ is a homogeneous
  $C^*$-algebra. We say that $A$ has \emph{no dimension
    growth} when the sequence $\{A_n\}$ can be chosen such that 
$$
A_n \simeq e_nC\left(X_n,M_{l_n}\right)e_n
$$
with $\sup_n \Dim X_n < \infty$ and $\lim_{n \to \infty} \min_{x \in
  X_n} \Rank e_n(x) = \infty$.
\end{defn}

Note that the no dimension growth condition is stronger than the slow
dimension growth condition used in \cite{Th3}.

\begin{prop}\label{AHthm} Assume that $\Dim Y < \infty$ and that
  $\phi$ is strongly transitive and not injective. It follows
  $C^*_r\left(R_{\phi}\right)$ is an AH-algebra with no dimension growth. 
\end{prop}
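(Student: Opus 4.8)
The plan is to produce the AH-presentation explicitly from the inductive limit decomposition (\ref{crux}) applied to $\phi$, i.e. $C^*_r\left(R_\phi\right)=\overline{\bigcup_n C^*_r\left(R\left(\phi^n\right)\right)}$, and to read off the no-dimension-growth condition directly from \reflemma{hmzero}.

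First I would record the structure of the building blocks $C^*_r\left(R\left(\phi^n\right)\right)$. Since $\phi$ is a local homeomorphism, so is every iterate $\phi^n$, and on the compact metric space $Y$ each fibre $\phi^{-n}(z)$ is finite and discrete; choosing pairwise disjoint open neighbourhoods of its points on which $\phi^n$ is injective shows that $z\mapsto\#\phi^{-n}(z)$ is locally constant. Hence $Y$ is the disjoint union of finitely many non-empty clopen sets $Y=Y^{(n)}_1\sqcup\dots\sqcup Y^{(n)}_{r_n}$ on which $\#\phi^{-n}\equiv k^{(n)}_j$, and $\phi^n$ restricts to a $k^{(n)}_j$-fold covering $\phi^{-n}\left(Y^{(n)}_j\right)\to Y^{(n)}_j$. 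Correspondingly $C^*_r\left(R\left(\phi^n\right)\right)$ is a finite direct sum over $j$, the $j$-th summand being the algebra of adjointable operators on the Hilbert $C\left(Y^{(n)}_j\right)$-module $C\left(\phi^{-n}\left(Y^{(n)}_j\right)\right)$, which is finitely generated projective of constant rank $k^{(n)}_j$; by the Serre--Swan theorem this summand is $e^{(n)}_j C\left(Y^{(n)}_j,M_{N^{(n)}_j}\right)e^{(n)}_j$ for some $N^{(n)}_j$ and some projection $e^{(n)}_j$ with $\Rank e^{(n)}_j\equiv k^{(n)}_j$. This is essentially the content of Corollary 3.3 and Lemma 3.6 of \cite{Th1}. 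Padding all fibres up to $N_n:=\max_j N^{(n)}_j$ and recalling that $Y^{(n)}_1\sqcup\dots\sqcup Y^{(n)}_{r_n}$ is just a clopen partition of $Y$ itself, we obtain $C^*_r\left(R\left(\phi^n\right)\right)\simeq e_nC\left(Y,M_{N_n}\right)e_n$ for a projection $e_n$ with $e_n(y)\ne 0$ for all $y\in Y$; in particular $C^*_r\left(R\left(\phi^n\right)\right)$ is a homogeneous $C^*$-algebra over the compact metric space $Y$.

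Next I would assemble the AH-presentation. By Lemma 2.10 of \cite{Th1} the inclusions $C^*_r\left(R\left(\phi^n\right)\right)\subseteq C^*_r\left(R\left(\phi^{n+1}\right)\right)$ are unital, all sharing the unit $1_Y\in C(Y)$, and $C^*_r\left(R_\phi\right)$ is the closure of their union. Thus $A_n:=C^*_r\left(R\left(\phi^n\right)\right)$ is an increasing sequence of unital homogeneous $C^*$-subalgebras with dense union, so $C^*_r\left(R_\phi\right)$ is a unital AH-algebra in the sense of \refdefn{slowdim}, with $A_n\simeq e_nC\left(X_n,M_{N_n}\right)e_n$ where $X_n=Y$; in particular $\sup_n\Dim X_n=\Dim Y<\infty$. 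Finally, $\min_{y\in Y}\Rank e_n(y)=\min_j k^{(n)}_j=\inf_{z\in Y}\#\phi^{-n}(z)$, and since $\phi$ is strongly transitive and not injective, \reflemma{hmzero} gives $\inf_{z\in Y}\#\phi^{-n}(z)\to\infty$ as $n\to\infty$, whence $\min_{y\in Y}\Rank e_n(y)\to\infty$. Therefore the sequence $\{A_n\}$ witnesses that $C^*_r\left(R_\phi\right)$ has no dimension growth, which is the assertion.

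The only delicate point --- and the step I would expect to be the main obstacle were the relevant structure theory not already available in \cite{Th1} --- is the identification of $C^*_r\left(R\left(\phi^n\right)\right)$, carrying its semi-\'etale (inductive limit) topology and the ambient norm inherited from $B^*_r\left(\Gamma_\phi\right)$, as a genuine homogeneous $C^*$-algebra whose fibre rank over $Y$ equals $\#\phi^{-n}(\cdot)$; granted Corollary 3.3 of \cite{Th1} this is immediate, and everything else is bookkeeping together with a direct appeal to \reflemma{hmzero}.
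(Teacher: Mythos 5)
Your argument is correct and has the same skeleton as the paper's proof: both take the increasing sequence $A_n=C^*_r\left(R\left(\phi^n\right)\right)$ coming from (\ref{crux}), show each $A_n$ is homogeneous over $Y$ (so $\sup_n \Dim X_n=\Dim Y<\infty$), and deduce $\min_{y\in Y}\Rank e_n(y)=\min_{y\in Y}\#\phi^{-n}(y)\to\infty$ from Lemma \ref{hmzero}. Where you genuinely differ is in the two intermediate structural facts. The paper gets the presentation $C^*_r\left(R\left(\phi^n\right)\right)\simeq e_nC\left(Y,M_{m_n}\right)e_n$ by specializing the proof of Theorem 3.2 of \cite{Th1} (noting that the fact, though well known, is hard to locate in the literature), and it identifies $\min_y\Rank e_n(y)$ with $\min_y\#\phi^{-n}(y)$ indirectly, via Lemma 3.4 of \cite{Th1} on the minimal dimension of irreducible representations. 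You instead construct the presentation from scratch: locally constant fibre count, clopen decomposition of $Y$ into pieces of constant covering degree, identification of each summand with the adjointable operators on the finitely generated projective Hilbert $C\left(Y^{(n)}_j\right)$-module $C\left(\phi^{-n}\left(Y^{(n)}_j\right)\right)$, and Serre--Swan. This buys you a self-contained proof of the homogeneity statement the paper only cites, and it exhibits the rank of $e_n$ fibrewise as $\#\phi^{-n}(\cdot)$, so no detour through irreducible representations is needed. One step you should spell out: local constancy of $z\mapsto\#\phi^{-n}(z)$ does not follow merely from choosing disjoint injectivity neighbourhoods of the points of a fibre --- that only yields lower semicontinuity; you also need properness of $\phi^n$ (automatic here since $Y$ is compact) to exclude extra preimages accumulating from outside those neighbourhoods, i.e. the standard fact that a proper local homeomorphism of locally compact Hausdorff spaces is a finite covering map. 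With that point made explicit, your proof is complete and correct.
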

\begin{proof} For each $n$ we have that 
\begin{equation}\label{renu}
 C^*_r\left(R\left(\phi^n\right)\right) \simeq e_nC\left(Y,M_{m_n}\right)e_n
\end{equation}
for some $m_n \in \mathbb N$ and some projection $e_n \in
C\left(Y,M_{m_n}\right)$. Although this seems to be well known it is
hard to find a proof anywhere so we point out that it can proved by
specializing the proof of Theorem 3.2 in \cite{Th1} to the case of a
surjective local homeomorphism $\phi$. In fact, it suffices to observe
that the $C^*$-algebra $A_{\phi}$ which features in Theorem 3.2 of
\cite{Th1} is $C(Y)$ in this case. Since $\min_{y \in Y} \Rank e_n(y)$
is the minimal dimension of an irreducible representation of
$C^*_r\left(R\left(\phi^n\right)\right)$ it therefore now suffices to show that the minimal dimension of the irreducible
  representations of $C^*_r\left(R(\phi^n)\right)$ goes to infinity when
  $n$ does. It follows from Lemma 3.4 of \cite{Th1} that the minimal dimension of the irreducible
  representations of $C^*_r\left(R(\phi^n)\right)$ is the same as the
  number
$\min_{y \in Y} \# \phi^{-n}(y)$. It follows from Lemma \ref{hmzero}
that
$$
\lim_{n \to \infty} \min_{y \in Y} \# \phi^{-n}(y) = \infty ,
$$
exponentially fast in fact.
\end{proof}

\begin{lemma}\label{?1} Assume that $C^*_r\left(\Gamma_{\phi }\right)$
  is simple. Then either $\phi $ is a homeomorphism or else 
\begin{equation}\label{limit}
\lim_{n \to \infty} \sup_{x \in Y}
m(x)^{-1}m(\phi (x))^{-1}m\left(\phi ^2(x)\right)^{-1} \dots
m\left(\phi ^{n-1}(x)\right)^{-1} = 0 ,
\end{equation}
where $m : Y \to \mathbb N$ is the function (\ref{m-funk}).
\begin{proof} Assume (\ref{limit}) does not hold. Since $\phi$ is a
  local homeomorphism, the function $m$ is continuous so it follows
  from Dini's theorem that
  there is at least one $x$ for which  
\begin{equation}\label{limit2}
\lim_{n \to \infty}
m(x)^{-1}m(\phi (x))^{-1}m\left(\phi ^2(x)\right)^{-1} \dots
m\left(\phi ^{n-1}(x)\right)^{-1} 
\end{equation}
is not zero. For this $x$ there is a $K$ such that $\# \phi^{-1}
\left(\phi^k(x)\right) = 1$ when $k \geq K$, whence the set 
$$
F = \left\{ y \in
  Y : \ \# \phi ^{-1}\left( \phi^k(y)\right) =1 \ \forall k \geq 0\right\}
$$
is not empty. Note that $F$ is closed and that $\phi ^{-k}\left(\phi ^k(F)\right) =
F$ for all $k$, i.e. $F$ is $\phi$-saturated. It follows from
Corollary \ref{A5} that $F$ determines a proper 
ideal $I_F$ in
$C_r^*(R_\phi)$. Since $\phi(F) \subseteq F$, it follows that
$\widehat{\phi}(I_F)\subseteq I_F$.  Then Theorem 4.10 of
\cite{Th1} and the 
simplicity of $C^*_r\left(\Gamma_{\phi}\right)$ imply that either $\phi$ is
injective or $I_F = \{0\}$. But $I_F = \{0\}$ means that $F=Y$ and
thus that $\phi$ is
injective. Hence $\phi$ is a homeormophism in both cases.
\end{proof}
\end{lemma}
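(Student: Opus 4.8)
The plan is to prove the contrapositive: assuming that the limit in (\ref{limit}) is not zero, I will construct a non-empty, closed, $\phi$-saturated set $F\subseteq Y$ for which either $F=Y$ or $\phi$ is injective, and then deduce that $\phi$ is a homeomorphism. The starting point is that, $\phi$ being a local homeomorphism, $m$ is continuous, so the functions $f_n(x)=\prod_{j=0}^{n-1} m(\phi^j(x))^{-1}$ form a decreasing sequence (decreasing because $m\geq 1$) of continuous functions on the compact space $Y$. If $\sup_x f_n(x)\not\to 0$, then $(f_n)$ cannot converge uniformly to $0$, and since $0$ is continuous Dini's theorem forces the pointwise limit $\inf_n f_n$ to be nonzero at some point $x$, i.e. (\ref{limit2}) is nonzero there. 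For such an $x$ the integer products $\prod_{j=0}^{n-1} m(\phi^j(x))$ stay bounded, hence $m(\phi^k(x))=1$ for all large $k$, so the set $F=\{\, y\in Y : \#\phi^{-1}(\phi^k(y))=1\ \forall k\geq 0\,\}$ is non-empty.

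The second step is to verify the structural properties of $F$. With $U=\{\, w : m(w)\geq 2\,\}$ open, $F=\bigcap_{k\geq 0}\phi^{-k}(Y\setminus U)$ is closed, and $\phi(F)\subseteq F$ is immediate from the definition. The one point requiring care is $\phi$-saturation, and I would prove the stronger statement that $\phi^k(z)=\phi^k(y)$ with $y\in F$ forces $z=y$: since $\phi^k(y)\in F$ gives $m(\phi^k(y))=1$, the two preimages $\phi^{k-1}(z)$ and $\phi^{k-1}(y)$ of $\phi^k(y)$ must coincide, and iterating downwards, using $m(\phi^j(y))=1$ for every $j$ (valid because $\phi^j(y)\in F$), yields $\phi^j(z)=\phi^j(y)$ for all $0\leq j\leq k$, in particular $z=y$. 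Hence $\phi^{-k}(\phi^k(F))=F$ for every $k$.

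Finally, Corollary \ref{A5} turns the non-empty closed $\phi$-saturated set $F$ into a proper ideal $I_F$ in $C^*_r(R_\phi)$, with $I_F=\{0\}$ precisely when $F=Y$; and $\phi(F)\subseteq F$ gives $\widehat{\phi}(I_F)\subseteq I_F$ from the explicit description of $\widehat{\phi}(a)=V_\phi aV_\phi^*$ and of $I_F$. Since $C^*_r(\Gamma_\phi)=C^*_r(R_\phi)\times_{\widehat{\phi}}\mathbb{N}$ is simple, Theorem 4.10 of \cite{Th1} then forces $\phi$ to be injective or $I_F=\{0\}$; but $F=Y$ also says $\#\phi^{-1}(y)=1$ for all $y$, so in either case $\phi$ is injective, hence a homeomorphism (a continuous bijection of a compact Hausdorff space). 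The main obstacle is getting the saturation of $F$ right — specifically, seeing that the preimages along the orbit of $y$ are forced to coincide — together with quoting Theorem 4.10 of \cite{Th1} correctly to relate $\widehat{\phi}$-invariant ideals of $C^*_r(R_\phi)$ with simplicity of the Stacey crossed product.
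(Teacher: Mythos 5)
Your proposal is correct and follows essentially the same route as the paper: Dini's theorem to find a point where the product of the $m(\phi^j(x))^{-1}$ does not tend to zero, the non-empty closed $\phi$-saturated set $F=\{y: \#\phi^{-1}(\phi^k(y))=1\ \forall k\geq 0\}$, Corollary \ref{A5} to get the proper $\widehat\phi$-invariant ideal $I_F$, and Theorem 4.10 of \cite{Th1} with simplicity to conclude injectivity; you merely fill in the saturation and boundedness details the paper leaves implicit. The only slip is cosmetic: $\bigcap_{k\geq 0}\phi^{-k}(Y\setminus U)$ omits the $k=0$ condition $\#\phi^{-1}(y)=1$ from your definition of $F$, but $F$ is closed anyway (the set $\{y:\#\phi^{-1}(y)\geq 2\}=\phi(U)$ is open since $\phi$ is open), so nothing in the argument is affected.
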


\begin{thm}\label{quotients} Let $\varphi : X \to X$ be a locally
  injective surjection on a compact metric
  space $X$ of finite covering dimension, and let $(Y,\phi)$ be its canonical
  locally homeomorphic extension. Let $A$ be a simple quotient of
  $C^*_r\left(\Gamma_{\varphi}\right)$. It follows that $A$ is
  $*$-isomorphic to either
\begin{enumerate}
\item[1)] a full matrix algebra $M_n(\mathbb C)$ for some $n \in
  \mathbb N$, or
\item[2)] the crossed product $C(F) \times_{\phi|_F} \mathbb Z$
  corresponding to an infinite minimal closed totally $\phi$-invariant
  subset $F \subseteq Y$ on which $\phi$ is injective, or
\item[3)] a purely infinite, simple, nuclear, separable $C^*$-algebra; more specifically to
  the crossed product $C^*_r\left(R_{\phi|_F}\right)
  \times_{\widehat{\phi|_F}} \mathbb N$ where $F$ is an infinite
  minimal closed totally $\phi$-invariant subset of $Y$ and
  $C^*_r\left(R_{\phi|_F}\right)$ is an AH-algebra with no dimension growth.
\end{enumerate}
\begin{proof} If $A$ is not a matrix algebra it has the form
  $C^*_r\left(\Gamma_{\phi|_F}\right)$ for some infinite minimal closed totally $\phi$-invariant
  subset $F \subseteq Y$ by (\ref{basiciso}) and Corollary \ref{maximal3}. If $\phi$ is injective on $F$ we are in case
  2). Assume not. Since $\Dim F \leq \Dim Y \leq \Dim X$ by
  Proposition \ref{dim!!!} it follows from Proposition \ref{AHthm}
  that $C^*_r\left(R_{\phi|_F}\right)$ is an AH-algebra with no
dimension growth. By \cite{An} (or Theorem 4.6 of \cite{Th1}) we have an isomorphism
$$ 
C^*_r\left(\Gamma_{\phi|_F}\right) \simeq C^*_r\left(R_{\phi|_F}\right) \times_{\widehat{\phi|_F}} \mathbb N,
$$
where $\widehat{\phi|_F}$ is the endomorphism of
$C^*_r\left(R_{\phi|_F}\right)$ given by conjugation with
$V_{\phi|_F}$. We claim that the pure infiniteness of $
C^*_r\left(R_{\phi|_F}\right) \times_{\widehat{\phi|_F}} \mathbb N$
follows from Theorem 1.1 of \cite{Th3}. For this it remains only to check that
$\widehat{\phi|_F} = \Ad V_{\phi|_F}$ satisfies the two conditions on
$\beta$ in Theorem 1.1 of \cite{Th3}, i.e. that $\widehat{\phi|_F}(1) =
V_{\phi|_F}V_{\phi|_F}^*$ is a full projection and that there is no
$\widehat{\phi|_F}$-invariant trace state on
$C^*_r\left(R_{\phi|_F}\right)$. The first thing was observed already
in Lemma 4.7 of \cite{Th1} so we focus on the second. Observe that it follows from
Lemma 2.24 of \cite{Th1} that
$\omega = \omega \circ P_{R_{\phi}}$ for every trace state $\omega$ of
$C^*_r\left(R_{\phi}\right)$.  By using this, a direct
calculation as on page 787 of \cite{Th1} shows that
$$
\omega\left( V_{\phi|_F}^n{V_{\phi|_F}^*}^n\right) \leq \sup_{y \in Y}
\left[m(y)m(\phi(y))\dots m\left(\phi^{n-1}(y)\right)\right]^{-1}
$$
Then Lemma \ref{?1} implies that $\lim_{n \to \infty} \omega\left(
  V_{\phi|_F}^n{V_{\phi|_F}^*}^n\right) = 0$. In particlar, $\omega$ is not $\widehat{\phi|_F}$-invariant.
\end{proof}
\end{thm}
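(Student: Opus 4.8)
The plan is to dispose first of the finite-dimensional case and then to pass to the canonical locally homeomorphic extension and carry out the analysis there. Since $A$ is a simple quotient of the separable algebra $C^*_r\left(\Gamma_{\varphi}\right)$ it is separable and nuclear (nuclearity being inherited from $C^*_r\left(\Gamma_{\varphi}\right)$), and if in addition $A$ is finite-dimensional then, being simple, it must be a full matrix algebra $M_n(\mathbb C)$; this is case 1). So assume from now on that $A$ is infinite-dimensional. Using the gauge-equivariant isomorphism (\ref{basiciso}) we identify $A$ with a simple quotient of $C^*_r\left(\Gamma_{\phi}\right)$, and Corollary \ref{maximal3} then produces an infinite minimal closed totally $\phi$-invariant subset $F \subseteq Y$ with $A \simeq C^*_r\left(\Gamma_{\phi|_F}\right)$. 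Here $F$ is a compact metrizable space, since the spectrum $Y$ of the separable algebra $D_{\Gamma_{\varphi}}$ is metrizable, and, being a closed subspace of the compact Hausdorff space $Y$, it satisfies $\Dim F \leq \Dim Y \leq \Dim X < \infty$ by the subspace theorem for covering dimension together with Proposition \ref{dim!!!}. Finally, simplicity of $C^*_r\left(\Gamma_{\phi|_F}\right)$ forces $\phi|_F$ to be strongly transitive by Proposition 4.3 of \cite{DS}.

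The decisive dichotomy is whether $\phi|_F$ is injective. An injective local homeomorphic surjection of a compact space is a homeomorphism, so if $\phi|_F$ is injective then $\Gamma_{\phi|_F}$ is the transformation groupoid of the homeomorphism $\phi|_F$ and $C^*_r\left(\Gamma_{\phi|_F}\right) \simeq C(F) \times_{\phi|_F} \mathbb Z$. Moreover, for a homeomorphism the closed totally $\phi$-invariant subsets of $F$ are exactly the closed $\phi|_F$-invariant subsets, so minimality of $F$ says precisely that $\phi|_F$ is a minimal homeomorphism. This is case 2).

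Now suppose $\phi|_F$ is not injective. Since $\Dim F < \infty$ and $\phi|_F$ is strongly transitive and not injective, Proposition \ref{AHthm} (applied to $\phi|_F$ on $F$) shows that $C^*_r\left(R_{\phi|_F}\right)$ is an AH-algebra with no dimension growth, and by \cite{An} (equivalently Theorem 4.6 of \cite{Th1}) we have $C^*_r\left(\Gamma_{\phi|_F}\right) \simeq C^*_r\left(R_{\phi|_F}\right) \times_{\widehat{\phi|_F}} \mathbb N$ for the Stacey crossed product with $\widehat{\phi|_F} = \Ad V_{\phi|_F}$. To conclude that $A$ is purely infinite I would invoke Theorem 1.1 of \cite{Th3}; what remains is to verify the two hypotheses there, namely that $\widehat{\phi|_F}(1) = V_{\phi|_F}V_{\phi|_F}^*$ is a full projection, which is Lemma 4.7 of \cite{Th1}, and that $C^*_r\left(R_{\phi|_F}\right)$ admits no $\widehat{\phi|_F}$-invariant trace state. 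Together with the separability and nuclearity already noted, this places $A$ in case 3).

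The heart of the argument, and the step I expect to be the main obstacle, is ruling out an invariant trace state. The plan is to use Lemma 2.24 of \cite{Th1}, by which every trace state $\omega$ of $C^*_r\left(R_{\phi|_F}\right)$ satisfies $\omega = \omega \circ P_{R_{\phi|_F}}$, and then to compute directly with the convolution formula, as on p.~787 of \cite{Th1}, the estimate $\omega\bigl(V_{\phi|_F}^n {V_{\phi|_F}^*}^n\bigr) \leq \sup_{y \in Y}\bigl[m(y)m(\phi(y))\cdots m\left(\phi^{n-1}(y)\right)\bigr]^{-1}$. Because $C^*_r\left(\Gamma_{\phi|_F}\right)$ is simple and $\phi|_F$ is not a homeomorphism, Lemma \ref{?1} applied to $\phi|_F$ forces the right-hand side to tend to $0$; on the other hand a $\widehat{\phi|_F}$-invariant trace state would satisfy $\omega\bigl(V_{\phi|_F}^n {V_{\phi|_F}^*}^n\bigr) = \omega\bigl(\widehat{\phi|_F}^{\,n}(1)\bigr) = \omega(1) = 1$ for every $n$, a contradiction. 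A recurring subtlety throughout is that several of the auxiliary results (Lemma \ref{?1}, Lemma \ref{hmzero}, Proposition \ref{AHthm}), though formulated for $\phi$ on $Y$, must be applied to the restriction $\phi|_F$; this is legitimate because $\phi|_F$ is again a strongly transitive, non-injective local homeomorphic surjection of a compact metric space of finite covering dimension, so all hypotheses transfer verbatim.
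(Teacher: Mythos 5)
Your proposal is correct and follows essentially the same route as the paper: reduce via (\ref{basiciso}) and Corollary \ref{maximal3} to $C^*_r\left(\Gamma_{\phi|_F}\right)$, split on injectivity of $\phi|_F$, use Propositions \ref{dim!!!} and \ref{AHthm} plus Theorem 1.1 of \cite{Th3}, and rule out invariant traces with Lemma 2.24 of \cite{Th1} and Lemma \ref{?1}. Your extra care in noting that simplicity forces strong transitivity of $\phi|_F$ (so Proposition \ref{AHthm} applies to the restriction) only makes explicit what the paper leaves implicit.
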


\begin{cor}\label{cor1} Assume that
  $C^*_r\left(\Gamma_{\varphi}\right)$ is simple and that $\Dim X < \infty$. It follows that
  $C^*_r\left(\Gamma_{\varphi}\right)$ is purely infinite if and only if
  $\varphi$ is not injective.
\end{cor}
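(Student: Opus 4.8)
The plan is to read the result off from Theorem~\ref{quotients}, applied to the algebra $C^*_r\left(\Gamma_{\varphi}\right)$ itself. Since this algebra is simple it is a simple quotient of itself, so by Theorem~\ref{quotients} (and here is where the hypothesis $\Dim X < \infty$ is used, through Proposition~\ref{dim!!!}) it is $*$-isomorphic to one of the three listed types. Now types $1)$ and $2)$ each admit a tracial state: $M_n(\mathbb C)$ carries the normalised matrix trace, and for a minimal homeomorphism $\phi|_F$ of the infinite compact metric space $F$ the crossed product $C(F)\times_{\phi|_F}\mathbb Z$ carries a tracial state built from a $\phi|_F$-invariant Borel probability measure (which exists by the Krylov--Bogolyubov theorem). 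A type~$3)$ algebra, being purely infinite and simple, admits no tracial state. Hence $C^*_r\left(\Gamma_{\varphi}\right)$ is purely infinite if and only if it is of type~$3)$, equivalently if and only if it has no tracial state, and it remains to show that this happens exactly when $\varphi$ is not injective.

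For the easy direction: if $\varphi$ is injective then it is a homeomorphism of $X$, being a continuous surjection of a compact metric space, so the canonical locally homeomorphic extension is $(Y,\phi)=(X,\varphi)$ and $C^*_r\left(\Gamma_{\varphi}\right)=C(X)\times_{\varphi}\mathbb Z$, which carries the tracial state $f\mapsto\int_X P_{\Gamma_{\varphi}}(f)\,d\mu$ associated to any $\varphi$-invariant probability measure $\mu$ on $X$; in particular it is not purely infinite. Conversely, suppose $\tau$ is a tracial state on $C^*_r\left(\Gamma_{\varphi}\right)$. Since $C^*_r\left(\Gamma_{\varphi}\right)$ is simple, $\tau$ is automatically faithful, and since $V_{\varphi}$ is an isometry the trace identity gives $\tau\left(1-V_{\varphi}V_{\varphi}^*\right)=\tau(1)-\tau\left(V_{\varphi}^*V_{\varphi}\right)=0$; as $1-V_{\varphi}V_{\varphi}^*\geq 0$, faithfulness forces $V_{\varphi}V_{\varphi}^*=1$. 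A direct convolution computation from $V_{\varphi}=m^{-\frac12}1_{\Gamma_{\varphi}(1,0)}$ shows $P_{\Gamma_{\varphi}}\left(V_{\varphi}V_{\varphi}^*\right)=m^{-1}$, so applying the faithful conditional expectation $P_{\Gamma_{\varphi}}$ to the relation $1-V_{\varphi}V_{\varphi}^*=0$ yields $m\equiv 1$, which says exactly that $\varphi$ is injective, contrary to assumption. Therefore $C^*_r\left(\Gamma_{\varphi}\right)$ has no tracial state precisely when $\varphi$ is not injective, and together with the previous paragraph this gives the corollary.

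I do not expect a genuine obstacle: once Theorem~\ref{quotients} is available the argument is elementary, so the difficulty is concentrated upstream in that theorem and in Proposition~\ref{dim!!!}. The two points that need a little care are (i) that the three types in Theorem~\ref{quotients} are genuinely distinguished by the presence of a tracial state — using the elementary facts that a nonzero trace on a simple $C^*$-algebra is faithful and that a purely infinite simple $C^*$-algebra has none — so that the trichotomy really does pin down whether the algebra is purely infinite; and (ii) the convolution identity $P_{\Gamma_{\varphi}}\left(V_{\varphi}V_{\varphi}^*\right)=m^{-1}$, which comes down to evaluating $V_{\varphi}\star V_{\varphi}^*$ on the unit space, where the only contributing term has middle coordinate $\varphi(x)$ and yields $m(x)^{-\frac12}m(x)^{-\frac12}=m(x)^{-1}$, together with the already-recorded facts that $m\in D_{R_{\varphi}}\subseteq D_{\Gamma_{\varphi}}$ and that $P_{\Gamma_{\varphi}}$ is faithful.
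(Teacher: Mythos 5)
Your proof is correct, and its skeleton is the same as the paper's: apply Theorem~\ref{quotients} to the simple algebra itself, dispose of the injective case via the identification with $C(X)\times_{\varphi}\mathbb Z$, and in the non-injective case use the isometry $V_{\varphi}$ to rule out types 1) and 2). The difference lies in the invariant used to separate type 3) from the other two. The paper argues via stable finiteness: it cites a direct calculation (as in Theorem 4.8 of \cite{Th1}) showing that $V_{\varphi}$ is a \emph{non-unitary} isometry when $\varphi$ is not injective, and notes that the algebras in cases 1) and 2) are stably finite, so the presence of a non-unitary isometry forces case 3). You instead route everything through tracial states: types 1) and 2) carry traces (Krylov--Bogolyubov for type 2)), a purely infinite simple algebra carries none, and a tracial state on the simple algebra would be faithful, forcing $V_{\varphi}V_{\varphi}^*=1$ and hence, via $P_{\Gamma_{\varphi}}\left(V_{\varphi}V_{\varphi}^*\right)=m^{-1}$, that $m\equiv 1$, i.e.\ $\varphi$ injective. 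What your version buys is self-containedness: the identity $P_{\Gamma_{\varphi}}\left(V_{\varphi}V_{\varphi}^*\right)=m^{-1}$ replaces the citation of \cite{Th1} for non-unitarity of $V_{\varphi}$, and your convolution computation of it is correct (and is in effect the same calculation the paper invokes in the trace estimate on page 787 of \cite{Th1} and in the proof of Theorem~\ref{quotients}). What it costs is the standard inputs you correctly flag: faithfulness of a tracial state on a simple unital $C^*$-algebra and tracelessness of purely infinite simple algebras; note also that faithfulness of $P_{\Gamma_{\varphi}}$ is not actually needed in your last step, since you apply $P_{\Gamma_{\varphi}}$ to the exact relation $V_{\varphi}V_{\varphi}^*=1$ rather than to an inequality. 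Both routes are sound and of comparable length.
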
 
 \begin{proof} Assume first that $\varphi$ is injective. Then
   $C^*_r\left(\Gamma_{\varphi}\right)$ is the crossed product $C(X)
   \times_{\varphi} \mathbb Z$ which is stably finite and thus not
   purely infinite. 

   Conversely, assume that $\varphi$ is not
   injective. Then a direct calculation, as in the proof of Theorem 4.8
   in \cite{Th1}, shows that $V_{\varphi}$ is a
   non-unitary isometry in $C^*_r\left(\Gamma_{\varphi}\right)$. 
   Since the $C^*$-algebras which feature in case 1) and case 2) of 
   Theorem \ref{quotients} are stably finite, the presence of a non-unitary isometry
   implies that $C^*_r\left(\Gamma_{\varphi}\right)$ is purely infinite.
\end{proof}

\begin{cor}\label{tokecor}
  Let $S$ be a one-sided subshift. If the $C^*$-algebra
  $\mathcal{O}_S$ associated with $S$ in \cite{C} is simple, then it is also
  purely infinite.
\end{cor}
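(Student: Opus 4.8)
The plan is to recognise $\mathcal{O}_S$ as one of the algebras $C^*_r\left(\Gamma_\varphi\right)$ studied here and then to quote Corollary~\ref{cor1}. First I would recall from \cite{C} --- it was precisely this class of examples that motivated \cite{Th1} --- that $\mathcal{O}_S$ is $*$-isomorphic to $C^*_r\left(\Gamma_\varphi\right)$, where $\varphi : X \to X$ is the shift on the compact metric space $X$ attached to $S$, a locally injective surjection. Since $X$ is (homeomorphic to) a closed subspace of a Cantor space $\{1,2,\dots,N\}^{\mathbb{N}}$, it is totally disconnected and hence $\Dim X = 0 < \infty$. So, granting that $\mathcal{O}_S$ is simple, Corollary~\ref{cor1} says that $\mathcal{O}_S$ is purely infinite if and only if $\varphi$ is not injective, and the whole statement comes down to showing that $\varphi$ cannot be injective when $\mathcal{O}_S$ is simple.

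To prove that I would argue by contradiction. If $\varphi$ is injective then, being also a surjection, it is a homeomorphism of $X$, so $\mathcal{O}_S \simeq C(X)\times_\varphi\mathbb{Z}$. Here I invoke the purely dynamical fact that \emph{the shift on an infinite one-sided subshift is never injective}: by the Morse--Hedlund theorem an infinite subshift has strictly increasing complexity, so for every $n$ there is a word of length $n$ admitting two distinct left-extensions inside the subshift; a routine compactness argument across these lengths then yields a point $v \in X$ together with distinct letters $a \neq b$ such that $av, bv \in X$, and since $\varphi(av) = v = \varphi(bv)$ injectivity fails. Hence, under our assumption, $X$ must be finite, i.e. a finite disjoint union of periodic $\varphi$-orbits; but then $\mathcal{O}_S \simeq C(X)\times_\varphi\mathbb{Z} \simeq \bigoplus_i M_{p_i}\left(C(\mathbb{T})\right)$, which is not simple --- contradicting the hypothesis. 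Therefore $\varphi$ is not injective, and $\mathcal{O}_S$ is purely infinite by Corollary~\ref{cor1}.

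The only step that is not purely formal is the dynamical claim that an infinite subshift cannot carry an injective shift map, and I expect this (elementary) point to be where the real content sits; alternatively one can bypass it by appealing directly to the conditions under which \cite{C} proves $\mathcal{O}_S$ simple, which already preclude injectivity of the shift. Everything else --- the identification $\mathcal{O}_S \simeq C^*_r\left(\Gamma_\varphi\right)$, the bound $\Dim X = 0$, the computation of a crossed product by a finite permutation, and the application of Corollary~\ref{cor1} (which itself rests on the classification of simple quotients in Theorem~\ref{quotients}) --- is routine.
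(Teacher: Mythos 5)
Your proposal is correct and takes essentially the same route as the paper's own proof: identify $\mathcal{O}_S$ with $C^*_r\left(\Gamma_{\sigma}\right)$ for the shift $\sigma$ (Theorem 4.18 of \cite{Th1}), observe that simplicity forces $\sigma$ to be non-injective, and invoke Corollary \ref{cor1} (with $\Dim S=0<\infty$). The only difference is in how the dynamical input is justified: the paper cites Proposition 2.4.1 of \cite{BS} (cf. \cite{BL}) for the fact that an injective shift can only live on a finite subshift, whereas you prove this directly by a Morse--Hedlund/complexity argument and exclude the finite case by computing $C(X)\times_{\varphi}\mathbb{Z}$ as a direct sum of algebras $M_{p_i}\left(C(\mathbb{T})\right)$, which is a sound, self-contained substitute for the citation.
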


\begin{proof}
  It follows from Theorem 4.18 in \cite{Th1} that $\mathcal{O}_S$ is
  isomorphic to $C^*_r\left(\Gamma_{\sigma}\right)$ where $\sigma$ is
  the shift map on $S$.
  If $\mathcal{O}_S$ is simple, $S$ must be infinite and it then
  follows from Proposition 2.4.1 in \cite{BS} (cf. Theorem 3.9 in
  \cite{BL}) that $\sigma$ is not injective. The
  conclusion follows then from Corollary \ref{cor1}.
\end{proof}

In Corollary \ref{tokecor} we assume that the shift map
$\sigma$ on $S$ is surjective. It is not clear if the result holds
without this assumption.

For completeness we point out that when $X$ is totally disconnected
(i.e. zero dimensional) the algebra $C^*_r\left(R_{\phi|_F}\right)$
which features in case 3) of Theorem \ref{quotients} is approximately
divisible, cf. \cite{BKR}. We don't know if this is the case in general,
but a weak form of divisibility is always present in
$C^*_r\left(R_{\phi}\right)$ when $C^*_r\left(\Gamma_{\varphi}\right)$
is simple and $\phi$ not injective, cf. \cite{Th3}.

\begin{prop}\label{propfinish} Assume that $Y$ is totally disconnected
  and $\phi$ strongly transitive and not injective. It follows that
  $C^*_r\left(R_{\phi}\right)$ is an approximately divisible AF-algebra.
\begin{proof} It follows from Proposition 6.8 of \cite{DS} that $C^*_r\left(R_{\phi}\right)$ is an
  AF-algebra. As pointed out in Proposition 4.1 of \cite{BKR} a unital
  AF-algebra fails to be approximately divisible only if it has a
  quotient with a non-zero abelian projection. If $C^*_r\left(R_{\phi}\right)$ has such a quotient there is also a primitive
  quotient with an abelian projection; i.e. by Proposition \ref{A17} there is
  an $x \in Y$ such that $C^*_r
  \left(R_{\phi|_{\overline{H(x)}}}\right)$ has a non-zero abelian
  projection $p$. It follows from (\ref{crux}) that every projection
  of $C^*_r
  \left(R_{\phi|_{\overline{H(x)}}}\right)$ is unitarily equivalent to
  a projection in $C^*_r
  \left(R\left(\phi^n|_{\overline{H(x)}}\right)\right)$ for some
$n$. Since $\overline{H(x)}$ is totally disconnected we can use Proposition 6.1 of
  \cite{DS} to conclude that every projection in $C^*_r
  \left(R\left(\phi^n|_{\overline{H(x)}}\right)\right)$ is unitarily equivalent to
  a projection in $D_{R_{\phi|_{\overline{H(x)}}}} =
  C\left(\overline{H(x)}\right)$. We may therefore assume that $p \in
  C\left(\overline{H(x)}\right)$ so that $p = 1_A$ for some clopen $A
  \subseteq \overline{H(x)}$. Then $H(x) \cap A \neq \emptyset$ so by
  exchanging $x$ with some element in $H(x)$ we may assume that $x \in
  A$. If there is a $y \neq x$ in $A$ such that $\phi^k(x) =
  \phi^k(y)$ for some $k \in \mathbb N$, consider functions $g \in
  C\left(\overline{H(x)}\right)$ and $f \in C_c\left(R_{\phi}\right)$
  such that $g(x) = 1, g(y) = 0$, $\supp g \subseteq A$, $\supp f \subseteq R_{\phi} \cap
  \left(A \times A\right)$ and $f(x,y) \neq 0$. Then $f,g \in 1_AC^*_r
  \left(R_{\phi|_{\overline{H(x)}}}\right)1_A$ and $gf \neq 0$
  while $fg = 0$, contradicting that
  $1_AC^*_r\left(R_{\phi|_{\overline{H(x)}}}\right)1_A$ is abelian. Thus no
  such $y$ can exist which implies that $\pi_x(1_A) = 1_{\{x\}}$, where
  $\pi_x$ is the representation (\ref{pirep}), restricted to the
  subspace of $H_x$ consisting of the functions supported in
  $\left\{(x',k,x) \in \Gamma_{\phi} : \ k = 0 \right\}$. It follows
  that
  $\pi_x\left(1_AC^*_r\left(R_{\phi|_{\overline{H(x)}}}\right)1_A\right) \simeq \mathbb C$. Consider a non-zero ideal $J \subseteq \pi_x\left(C^*_r\left(R_{\phi|_{\overline{H(x)}}}\right)\right)$. Then $\pi_x^{-1}(J)$ is a non-zero ideal in $C^*_r\left(R_{\phi|_{\overline{H(x)}}}\right)$ and it follows from Corollary \ref{A5} that there is an open non-empty subset $U$ of $\overline{H(x)}$ such that $\phi^{-k}\left( \phi^k(U)\right) = U$ for all $k$ and $C_0(U) = \pi_x^{-1}(J) \cap C\left(\overline{H(x)}\right)$. Since $H(x) \cap U \neq \emptyset$, it follows that $x \in U$ so there is a function $g \in \pi_x^{-1}(J) \cap C\left(\overline{H(x)}\right)$ such that $g(x) = 1$. It follows that $\pi_x\left(g1_A\right) = 1_{\left\{x\right\}} = \pi_x\left(1_A\right) \in J$. This shows that $\pi_x(1_A)$ is a full projection in $\pi_x\left(C^*_r\left(R_{\phi|_{\overline{H(x)}}}\right)\right)$ and Brown's theorem, \cite{Br}, shows now that $\pi_x\left(C^*_r\left(R_{\phi|_{\overline{H(x)}}}\right)\right)$ is stably isomorphic to $\pi_x\left(1_AC^*_r\left(R_{\phi|_{\overline{H(x)}}}\right)1_A\right) \simeq \mathbb C$. Since $\pi_x\left(C^*_r\left(R_{\phi|_{\overline{H(x)}}}\right)\right)$ is unital this means that it is a full matrix algebra. In conclusion we deduce that if $C^*_r\left(R_{\phi}\right)$ is not approximately divisible it has a full matrix
  algebra as a quotient. By Corollary \ref{A5} this implies that
there is a finite set $F'
  \subseteq Y$ such that $F' = \phi^{-k}\left(\phi^k(F')\right)$
  for all $k \in \mathbb N$. Since 
$$
\phi^{-k}\left(\phi^k(x)\right) \subseteq
\phi^{-k-1}\left(\phi^{k+1}(x)\right) \subseteq F'
$$ for all $k$ when $x \in F'$, there is for each $x \in F $ a natural
number $K$ such that $\phi^{-k}\left(\phi^k(x)\right) =
\phi^{-K}\left(\phi^K(x)\right)$ when $k \geq K$.  Then $\# \phi^{-1} \left(
  \phi^k(x)\right) =1$ for $k \geq K+1$, so that $m\left(\phi^k(x)\right)
= 1$ for all $k \geq K$, which by Lemma \ref{hmzero} contradicts that
$\phi$ is not injective. This
contradiction finally shows that $C^*_r\left(R_{\phi}\right)$ is
approximately divisible, as desired.
\end{proof}
\end{prop}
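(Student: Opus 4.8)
The statement has two halves, and they are of very different difficulty. That $C^*_r(R_\phi)$ is an AF-algebra when $Y$ is totally disconnected I would simply quote from Proposition 6.8 of \cite{DS}; the content is approximate divisibility. For this I would use the criterion of Proposition 4.1 of \cite{BKR}: a \emph{unital} AF-algebra is approximately divisible unless it admits a quotient with a non-zero abelian projection. So the strategy is to argue by contradiction: assume such a quotient exists, and derive a contradiction with the hypothesis that $\phi$ is not injective.

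From a quotient of $C^*_r(R_\phi)$ with a non-zero abelian projection one may pass, by standard ideal theory, to a \emph{primitive} quotient with such a projection, which by Proposition \ref{A17} is of the form $C^*_r(R_{\phi|_{\overline{H(x)}}})$ for some $x \in Y$, carrying a non-zero abelian projection $p$. I would then normalise $p$. Using the inductive-limit description (\ref{crux}), every projection of this algebra is unitarily equivalent to one sitting in some $C^*_r(R(\phi^n|_{\overline{H(x)}}))$; and since $\overline{H(x)}$ is again totally disconnected, Proposition 6.1 of \cite{DS} lets me move such a projection into the diagonal $D_{R_{\phi|_{\overline{H(x)}}}} = C(\overline{H(x)})$. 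Hence I may assume $p = 1_A$ for a clopen set $A \subseteq \overline{H(x)}$, and since $H(x) \cap A \neq \emptyset$ I may replace $x$ by a point of $H(x)$ lying in $A$, so that $x \in A$.

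The heart of the argument is to show that this forces $C^*_r(R_\phi)$ to have a full matrix algebra as a quotient. First, no point $y \in A$ with $y \neq x$ can satisfy $\phi^k(x) = \phi^k(y)$ for some $k$: otherwise, choosing $g \in C(\overline{H(x)})$ supported in $A$ with $g(x) = 1$, $g(y) = 0$ and $f \in C_c(R_\phi)$ supported in $R_\phi \cap (A \times A)$ with $f(x,y) \neq 0$, one has $f,g \in 1_A C^*_r(R_{\phi|_{\overline{H(x)}}}) 1_A$ with $gf \neq 0 = fg$, contradicting commutativity of the corner. In the representation $\pi_x$ of (\ref{pirep}) this means that $\pi_x(1_A)$ is a rank-one projection, so $\pi_x(1_A C^*_r(R_{\phi|_{\overline{H(x)}}}) 1_A) \simeq \mathbb{C}$. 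A fullness check — any non-zero ideal of $\pi_x(C^*_r(R_{\phi|_{\overline{H(x)}}}))$ comes, via Corollary \ref{A5}, from an open $\phi$-saturated subset of $\overline{H(x)}$ meeting $H(x)$, hence containing $x$, hence containing $\pi_x(1_A)$ — then lets Brown's theorem \cite{Br} conclude that $\pi_x(C^*_r(R_{\phi|_{\overline{H(x)}}}))$ is stably isomorphic to $\mathbb{C}$ and, being unital, a full matrix algebra. This matrix algebra is a quotient of $C^*_r(R_\phi)$.

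Finally I would extract the contradiction. A non-zero finite-dimensional matrix-algebra quotient has finite-dimensional diagonal, so by Corollary \ref{A5} the corresponding closed $\phi$-saturated subset $F' \subseteq Y$ is finite and non-empty. For $x \in F'$ the sets $\phi^{-k}(\phi^k(x))$ increase with $k$ and all lie inside the finite set $F'$, hence stabilise; a short backwards-orbit argument then gives $m(\phi^k(x)) = 1$ for all large $k$ and, iterating, $\#\phi^{-n}(\phi^{K+n}(x)) = 1$ for every $n$. This flatly contradicts the fact, from Lemma \ref{hmzero}, that $\inf_{z \in Y} \#\phi^{-n}(z) \to \infty$, which applies precisely because $\phi$ is strongly transitive and not injective. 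The step I expect to be most delicate is the third paragraph: translating the abelian-corner hypothesis through the Hilbert-space representations $\pi_x$ into the statement that the corner equals $\mathbb{C}$ and is full, since this is exactly where the tail-equivalence dynamics on the clopen set $A$ has to be played off against the $C^*$-structure.
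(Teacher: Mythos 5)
Your proposal follows the paper's own proof essentially step for step: the same AF and approximate-divisibility criteria from \cite{DS} and \cite{BKR}, the same reduction via Proposition \ref{A17} and Proposition 6.1 of \cite{DS} to a clopen abelian corner $1_A$ with $x\in A$, the same commutativity argument, fullness check via Corollary \ref{A5}, Brown's theorem yielding a matrix-algebra quotient, and the same finite $\phi$-saturated set contradiction with Lemma \ref{hmzero}. It is correct and not a different route.
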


\bibliographystyle{plain}

\begin{thebibliography}{WWW} 


\bibitem[An]{An} C. Anantharaman-Delaroche, {\em Purely infinite $C^*$-algebras arising from dynamical systems}, Bull. Soc. Math. France {\bf 125} (1997), 199--225.


\bibitem[BKR]{BKR} B. Blackadar, A. Kumjian and M. R\o rdam, {\em Approximately central matrix units and the structure of noncommative tori}, K-theory {\bf 6} (1992), 267--284.



\bibitem[BoKR]{BoKR} S. Boyd, N. Keswari and I. Raeburn, {\em Faithful
    Representations of Crossed Products by Endomorphisms},
  Proc. Amer. Math. Soc. {\bf 118} (1993), 427-436.


\bibitem[BL]{BL} M. Boyle and D. Lind, \emph{Expansive subdynamics}, Trans. Amer. Math.
  Soc. \textbf{349} (1997), 55--102. 

\bibitem[BS]{BS} M. Brin and G. Stuck, {\em Introduction to dynamical
    systems}, Cambridge University Press, Cambridge, 2002.  


\bibitem[Br]{Br} L. Brown, {\em Stable isomorphism of hereditary subalgebras of $C^*$-algebras}, Pacific J. Math. {\bf 71} (1977), 335--348.


\bibitem[Ca]{C} T. M. Carlsen, {\em Cuntz-Pimsner $C^*$-algebras
    associated with subshifts}, Internat. J. Math. {\bf 19} (2008), 47-70.


\bibitem[De1]{De1} V. Deaconu, {\em Groupoids associated with endomorphisms}, Trans. Amer. Math. Soc. {\bf 347} (1995), 1779-1786.

\bibitem[De2]{De2} \bysame, {\em Generalized solenoids and
    $C^*$-algebras}, Pacific J. Math. {\bf 190} (1999), 247-260. 

\bibitem[DS]{DS} V. Deaconu and F. Schultz, {\em $C^*$-algebras
    associated with interval maps}, Trans. Amer. Math. Soc. {\bf 359}
  (2007), 1889-1924.


\bibitem[En]{En} R. Engelking, {\em Dimension Theory}, North-Holland, Amsterdam, Oxford, New York (1978)


\bibitem[Ka]{K} T. Katsura, {\em A class of $C^*$-algebras generalizing
    both graph algebras and homeomorphism algebras. III Ideal
    structures}, Ergodic Th. Dyn. Syst. {\bf 26} (2006), 1805-1854.

\bibitem[Ma]{Ma} K. Matsumoto, {\em $C^*$-algebras associated
    with presentations of subshifts}, Doc. Math. {\bf 7} (2002), 1-30.


\bibitem[Pe]{Pe} G. K. Pedersen, {\em $C^*$-algebras and their automorphism groups}, Academic Press, London, New York, San Francisco, 1979.



\bibitem[RW]{RW} I. Raeburn and D. P. Williams, {\em Morita
    Equivalence and Continuous-Trace $C^*$-algebras}, American
  Mathematical Society, 1998.

\bibitem[Re]{Re} J. Renault, {\em A Groupoid Approach to $C^*$-algebras},  LNM 793, Springer Verlag, Berlin, Heidelberg, New York, 1980.  



\bibitem[St]{St} P.J. Stacey, {\em Crossed products of $C^*$-algebras
    by endomorphisms}, J. Austral. Math. Soc. {\bf 54} (1993), 204-212.

\bibitem[Th1]{Th1} K. Thomsen, {\em Semi-\'etale groupoids and
  applications}, Annales de l'Institute Fourier {\bf 60} (2010), 759-800.

\bibitem[Th2]{Th2} \bysame, {\em On the $C^*$-algebra of a locally
    injective surjection and its KMS states}, Comm. Math. Phys., to appear.

\bibitem[Th3]{Th3} \bysame, {\em Pure infiniteness of the crossed
    product of an AH-algebra by an endomorphism}, arXiv:1010.0960.


\end{thebibliography}


\end{document}